\providecommand*{\twoheadrightarrowfill@}{%
	\arrowfill@\relbar\relbar\twoheadrightarrow
}
\providecommand*{\twoheadleftarrowfill@}{%
	\arrowfill@\twoheadleftarrow\relbar\relbar
}
\providecommand*{\xtwoheadrightarrow}[2][]{%
	\ext@arrow 0579\twoheadrightarrowfill@{#1}{#2}%
}
\providecommand*{\xtwoheadleftarrow}[2][]{%
	\ext@arrow 5097\twoheadleftarrowfill@{#1}{#2}%
}
\newcommand{\C}{{\mathfrak C}}
\newcommand{\M}{{\mathcal M}}
\newcommand{\R}{{\mathbb{R}}}
\DeclareMathOperator{\tp}{{tp}}
\DeclareMathOperator{\Th}{{Th}}
\DeclareMathOperator{\gal}{{Gal}}
\DeclareMathOperator{\aut}{{Aut}}
\DeclareMathOperator{\autf}{{Autf}}
\DeclareMathOperator{\st}{{st}}
\DeclareMathOperator{\bdd}{{bdd}}
\DeclareMathOperator{\acl}{{acl}}
\DeclareMathOperator{\Fix}{{Fix}}
\DeclareMathOperator{\St}{{St}}
\DeclareMathOperator{\Def}{{Def}}
\def\lL{\mathcal L}
\newtheorem{thm}{Theorem}[section]
\newtheorem{ques}[thm]{Question}
\newtheorem{lem}[thm]{Lemma}
\newtheorem{fct}[thm]{Fact}
\newtheorem{cor}[thm]{Corollary}
\newtheorem{prop}[thm]{Proposition}
\theoremstyle{remark}
\newtheorem{rem}[thm]{Remark}
\theoremstyle{definition}
\newtheorem{dfn}[thm]{Definition}
\newtheorem{clm*}{Claim}
\newtheorem{ex}[thm]{Example}
\newcounter{claimcounter}[thm]
\newenvironment{clm}{\stepcounter{claimcounter}{\noindent {\textbf{Claim}} \theclaimcounter:}}{}
\newenvironment{clmproof}[1][\proofname]{\proof[#1]}{\endproof}
\title{On first order amenability}
\author{Ehud Hrushovski}
\email[E. Hrushovski]{Ehud.Hrushovski@maths.ox.ac.uk}
\address[E.\ Hrushovski]{Mathematical Institute, University of Oxford\\
Andrew Wiles Building\\
Radcliffe Observatory Quarter (550)\\
Woodstock Road\\
Oxford OX2 6GG, UK}
\author{Krzysztof Krupi\'nski}
\email[K.\ Krupi\'{n}ski]{kkrup@math.uni.wroc.pl}
\address[K.\ Krupi\'nski]{
	Instytut Matematyczny, Uniwersytet Wroc\l awski\\
	pl. Grunwaldzki 2\\
	50-384 Wroc\l aw, Poland
}
\thanks{The second author is supported by National Science Center, Poland, grants 2015/19/B/ST1/01151, 2016/22/E/ST1/00450, and 2018/31/B/ST1/00357}
\address{
	ORCID (K.\ Krupi\'{n}ski): \href{https://orcid.org/0000-0002-2243-4411}{0000-0002-2243-4411}}
\author{Anand Pillay}
\email[A.\ Pillay]{apillay@nd.edu}
\address[A.\ Pillay]{Department of Mathematics, University of Notre Dame\\
	255 Hurley Hall\\
	Notre Dame, IN 46556, USA}
\thanks{The third author was supported by NSF grants DMS-136702,  DMS-1665035, and DMS-1760212}
\keywords{Amenable theory, $G$-compactness}
\subjclass[2010]{03C45, 43A07}
\date{}
\begin{document}
\maketitle

\begin{abstract}
We introduce  the notion of {\em first order  amenability}, as a property of a first order theory $T$:  every complete type over $\emptyset$, in possibly infinitely many variables, extends to an automorphism-invariant global Keisler measure  in the same variables.   Amenability of $T$   follows from  amenability of the (topological) group $\aut(M)$ for all sufficiently large $\aleph_{0}$-homogeneous countable models $M$ of $T$ (assuming $T$ to be countable), but is radically less restrictive.

First, we study basic properties of amenable theories, giving many equivalent conditions. Then, applying a version of the stabilizer theorem from \cite{HKP1}, we prove that if $T$ is amenable, then $T$ is $G$-compact, namely Lascar strong types and Kim-Pillay strong types over $\emptyset$ coincide.  This extends and essentially generalizes a similar result proved via different methods for $\omega$-categorical theories in \cite{KrPi}. In the special case when amenability is witnessed by $\emptyset$-definable global Keisler measures (which is for example the case for amenable $\omega$-categorical theories), we also give a different proof, based on stability in continuous logic.

Parallel (but easier) results hold for the notion of {\em extreme amenability}.
\end{abstract}

\section{Introduction}


We introduce the notions of {\em amenable} and {\em extremely amenable} first order theory.  This is part of our attempt to extract the model-theoretic content of the circle of ideas around [extreme] amenability of automorphism groups of countable structures, which we discuss further below.  We say that $T$ is {\em amenable} if for every $p \in S_{\bar x}(\emptyset)$, in any (possibly infinite) tuple of variables $\bar x$, there exists an $\aut(\C)$-invariant, Borel probability measure on $S_p(\C):=\{ q \in S_{\bar x}(\C): p \subseteq q\}$, where $\C$ is a monster model of $T$. {\em Extreme amenability} of $T$ means that the invariant measure above can be chosen to be a {\em Dirac}, namely: every $p \in S_{\bar x}(\emptyset)$ extends to a global $\aut(\C)$-invariant complete type.  We study properties of [extreme] amenability, showing for example that they are indeed  properties of the theory (i.e. do  not depend on $\C$) and providing several equivalent definitions. We will discuss here amenability, leaving the extreme version to further paragraphs. One of the equivalent definitions of amenability of $T$  is that $\aut(\C)$ is {\em relatively amenable} (i.e. there is an $\aut(\C)$-invariant, finitely additive, probability measure on the Boolean algebra of relatively definable subsets of $\aut(\C)$ treated as a subset of $\C^{\C}$). Relative amenability of $\aut(\C)$ (or, more generally, of the group of automorphisms of any model) is a natural counterpart of definable amenability of a definable group. The above observations work for any $\aleph_0$-saturated and strongly $\aleph_0$-homogeneous model $M$ in place of $\C$. For such an $M$, if $\aut(M)$ is amenable as a topological group (with the pointwise convergence topology), then $T$ is amenable. We point out in a similar fashion that (for countable $T$) if $\aut(M)$ is amenable for all sufficiently large $\aleph_{0}$-homogeneous countable models, then $T$ is amenable.  In the NIP context, we get a full characterization of amenability of $T$ in various terms, e.g. by saying that $\emptyset$ is an extension  base, which also yields a class of examples of amenable theories, e.g. all stable or o-minimal or $c$-minimal theories are amenable. Also, the theories  of measurable structures in the sense of Elwes and Macpherson (e.g. pseudo-finite fields) \cite{ElMa} are amenable. 

This paper is concerned with the implications of [extreme] amenability of a first order theory $T$ for the Galois group $\gal_{L}(T)$. So let us discuss briefly those Galois groups as well as the notions of $G$-compactness and $G$-triviality and why they should be considered important.  Formal definitions will be given in Section \ref{section: preliminaries on G-compactness}, but we give a rather more relaxed description now.  See also the introduction to \cite{KrPi}.
At the centre are the key notions of {\em strong types}. Two tuples $\bar a$ and $\bar b$ from the monster model ${\mathfrak C}$, of the same (bounded) length, 
have the same {\em Lascar strong type} if $E(\bar a,\bar b)$ whenever $E$ is an $\aut({\mathfrak C})$-invariant equivalence relation with boundedly many classes. 
If we instead consider only bounded equivalence relations $E$ which are {\em type-definable} over $\emptyset$, we obtain the notion of having the same {\em Kim-Pillay strong type} (in short, KP-strong type). 
 The group of permutations of all Lascar strong types induced by $\aut({\mathfrak C})$ is called the {\em Lascar Galois group} $\gal_{L}(T)$;
$\gal_{KP}(T)$ is defined analogously. 
When Lascar strong types coincide with KP-strong types, $\gal_{L}(T)$ has naturally the structure of a compact Hausdorff group, and $T$ is said to be 
{\em $G$-compact}.  When Lascar strong types coincide with types (over $\emptyset$), then $\gal_{L}(T)$ is trivial, and $T$ is said to be {\em $G$-trivial}. Lascar strong types present {\em obstructions} to various kinds of type amalgamation.  Also in \cite{La}, where the Lascar Galois group was first defined, they present obstacles to recovering an $\omega$-categorical theory $T$ from its category of models. 
As KP-strong types are much easier to handle than Lascar strong types, $G$-compactness is a desirable property.  In any case, $\gal_{L}(T)$ and $\gal_{KP}(T)$ are important invariants of an arbitrary complete first order theory $T$ and play important  roles in model theory.

The main result of this paper (proved in Section \ref{Subsection 4.2}) is the following
%
%
\begin{thm}\label{The main theorem}
Every amenable theory is $G$-compact.
\end{thm}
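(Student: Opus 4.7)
The plan is to show that the Lascar equivalence relation $\equiv_L$ agrees with the Kim--Pillay relation $\equiv_{KP}$, equivalently that the natural surjection $\gal_L(T) \twoheadrightarrow \gal_{KP}(T)$ is injective. Since $\equiv_L$ automatically refines $\equiv_{KP}$, it suffices to prove the reverse containment: for every small tuple $\bar a$, the KP class of $\bar a$ is contained in its Lascar class. Because the Lascar class of $\bar a$ is the increasing union of the type-definable ``Lascar distance $\le n$'' relations $\Theta_n(\bar a, \cdot)$, I would further reduce to producing a bounded type-definable equivalence relation $E$ with $[\bar a]_E \subseteq [\bar a]_L$: the minimality of $\equiv_{KP}$ among bounded type-definable equivalence relations then yields $[\bar a]_{KP} \subseteq [\bar a]_E \subseteq [\bar a]_L$, which is what we want.

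First I would fix $\bar a$ to enumerate a small model $M_0$, so that $\autf_L(\C)$ acts freely on realizations of $p := \tp(\bar a/\emptyset)$ and the Lascar class of $\bar a$ coincides with a single $\gal_L(T)$-orbit sitting inside the type-definable KP class $Y$ of $\bar a$. Amenability then supplies an $\aut(\C)$-invariant Borel probability measure $\mu$ on $S_p(\C)$; restricting $\mu$ to $Y$ and renormalizing gives a $\gal_L(T)$-invariant probability measure on a single Galois orbit, to be used as a ``Haar-like'' input for the stabilizer theorem.

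The central step is to apply the version of the stabilizer theorem from \cite{HKP1} to this configuration. The Lascar class inside $Y$, equipped with $\mu$, is an $\aut(\C)$-invariant wide subset of $Y$ carrying an invariant measure, which is precisely the sort of data from which the stabilizer theorem produces a type-definable stabilizer. Translated back into equivalence relations on realizations of $p$, this should give the required bounded type-definable $E$ with $[\bar a]_E \subseteq [\bar a]_L$, closing the argument via the reduction above.

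The hard part will be this translation: the acting group is $\aut(\C)$ or its quotient $\gal_L(T)$, neither of which is a type-definable group, so one has to work through the enumeration $\bar a$ of $M_0$ to convert ``stabilizer of the invariant measure'' into a type-definable subrelation of $\equiv_{KP}$ on realizations of $p$. One also has to verify the ``wideness''/genericity hypothesis of the theorem in \cite{HKP1} for the Lascar class inside $Y$ with respect to $\mu$, and this is where the full strength of amenability---the existence of invariant measures on \emph{every} complete type over $\emptyset$, not just on a single distinguished one---is likely to be essential.
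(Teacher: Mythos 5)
Your high-level strategy matches the paper's: use the Massicot--Wagner-style stabilizer theorem from \cite{HKP1}, fed by the invariant measure that amenability provides, to produce a bounded type-definable object on which $E_{KP}$-minimality can be leveraged. The reduction "find a bounded type-definable $E$ with $[\bar a]_E \subseteq [\bar a]_L$, then invoke minimality of $E_{KP}$" is sound and is essentially what the paper does via Proposition \ref{proposition: bounded index implies containment of Autf-KP} (the bounded-index, relatively type-definable subgroup gives rise to a bounded, $\emptyset$-type-definable orbit equivalence relation). So the route is correctly identified. However, you have named the central obstacle---$\aut(\C)$ is not a definable group, so the stabilizer theorem does not apply to it off the shelf---without proposing how to overcome it, and this is exactly where the real content of the proof lives. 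The paper's solution is to set up a calculus of \emph{relatively type-definable} subsets of $\aut(\C)$ (Definition \ref{definition: relatively type-definable sets}), prove that this class is closed under products and inverses (Lemma \ref{lemma: relative type-definability is preserved under products}) and satisfies a compactness principle (Corollary \ref{corollary: compactness for relatively definable subsets}), establish that relatively type-definable bounded-index subgroups contain $\autf_{KP}(\C)$ (Proposition \ref{proposition: bounded index implies containment of Autf-KP}, Corollary \ref{corollary: Autf-KP contained in the intersection}), and then run a recursion (Corollary \ref{corollary: Autf-KP contained in A4}) to pass from the stabilizer theorem's output to such a subgroup. None of this appears in your sketch, so I would count it as a genuine gap rather than a detail to be filled in.

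There is also a more specific misdirection in the measure-theoretic setup. You propose to take the Lascar class of $\bar a$ inside its KP-class $Y$ as the ``wide'' set on which to run the stabilizer theorem, but the Lascar class is not type-definable (unless $T$ is already $G$-compact), and the stabilizer theorem needs its positive-measure input to be (relatively) type-definable. Restricting $\mu$ to $Y$ also does not give anything invariant under $\gal_L(T)$ in a usable sense, since the $\aut(\C)$-action on $S_p(\C)$ does not factor through $\gal_L(T)$. In the paper the Lascar class appears only in the \emph{conclusion} (Lemma \ref{lemma: the intersection contained in Autf-L}: a product of the form $A_{q|_{\bar\alpha},\bar\alpha}A_{q|_{\bar\alpha},\bar\alpha}A_{q|_{\bar\alpha},\bar\alpha}^{-1}A_{q|_{\bar\alpha},\bar\alpha}^{-1}$ moves $\bar\alpha$ a bounded Lascar distance), not as a hypothesis. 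The hypothesis is provided instead by choosing a $\mu$-wide global completion $q$ of $p$ and using the positive-measure relatively type-definable sets $A_{\varphi,\bar\alpha}$ for $\varphi \in q|_{\bar\alpha}$. You should replace ``wideness of the Lascar class'' with ``wideness of a suitable global type'' and supply the relatively-type-definable machinery before the argument can close.
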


This result is a wide generalization of Theorem 0.7 from \cite{KrPi} which says that whenever $M$ is a countable, $\omega$-categorical structure and $\aut(M)$ is amenable as a topological group, then $\Th(M)$ is $G$-compact. Theorem 0.7 of \cite{KrPi} was deduced (by a non-trivial argument which is interesting in its own right) from \cite[Theorem 0.5]{KrPi}, more precisely, from the fact that amenability of a topological group implies equality of certain model-theoretic/topological connected  components. In \cite{KrPi}, this last fact was proved for groups possessing  a basis of open neighborhoods of the identity consisting of open subgroups, which was sufficient in the proof of \cite[Theorem 0.7]{KrPi}, because $\aut(M)$ has this property; later, this fact was proved in full generality in \cite[Corollary 2.37]{HKP1}.  As to our very general Theorem \ref{The main theorem}, we do not have an argument showing that it follows from \cite[Corollary 2.37]{HKP1}; 
instead we give a direct proof working with {\em relatively type-definable subsets} of the group of automorphisms of the monster model and using  a version from \cite{HKP1} of Massicot-Wagner stabilizer theorem \cite{MaWa}.
Theorem \ref{The main theorem} can be viewed as a transposition of \cite{MaWa} from definable
groups to theories.  It might have followed easier from an application of
\cite{MaWa} to the automorphism group of the monster model,  if  the ``stabilizers'' produced in \cite{MaWa} were quantifier-free definable.  
But they are not, and so we have to proceed differently.

In Section \ref{Subsection 4.1 and a half} (see Proposition \ref{proposition: main theorem for definable measures}), we give a completely different proof of Theorem \ref{The main theorem} (with a better bound on the diameters of Lascar strong types than the one obtained in Theorem \ref{theorem: relative definable amenability implies G-compactness}) which is based on stability theory in continuous logic,  but under the stronger assumption of the existence of $\emptyset$-definable Keisler measures on all $\emptyset$-definable sets (in which case we say that the theory is {\em definably amenable}).  This also includes the $\omega$-categorical context from \cite[Theorem 0.7]{KrPi}, yielding yet another proof of \cite[Theorem 0.7]{KrPi}. The readers who do not feel comfortable with continuous logic can skip Section \ref{Subsection 4.1 and a half} with no harm.

Let us note that the converse of Theorem \ref{The main theorem} does not hold: for example, the theory of a dense circular order is known to be $G$-compact, but it is not amenable, because it has NIP and $\emptyset$ is not an extension base.

Extreme amenability of automorphism groups of (arbitrary) countable structures $M$ was studied in detail by Kechris, Pestov, and Todor\'{c}evi\'{c}.  Their paper \cite{KPT} inspired a whole school, connecting to structural Ramsey combinatorics and dynamics. When $\Th(M)$ is $\omega$-categorical, then extreme amenability of $\aut(M)$ is a property of this first order theory, so is a model-theoretic notion  (in the sense of model theory being the study of first order theories rather than arbitrary structures).  Some of this extends to homogeneous models of arbitrary theories and to continuous logic (thanks to Todor Tsankov for a conversation about this with one of the authors). 

Let us comment on the relation between extreme amenability of the automorphism group of an $\omega$-categorical, countable structure $M$ as considered in \cite{KPT} (which we call  KPT-extreme amenability) and extreme amenability of $\Th(M)$ in our sense. KPT-extreme amenability concerns {\em all} flows of the topological group $\aut(M)$ and says that the universal flow (or rather ambit) has a fixed point.  Our first order extreme amenability (of $\Th(M)$) can also be read off from flows of $\aut(M)$ and says that a {\em particular flow} $S_{\bar m}(M)$ has a fixed point (where $\bar m$ is an enumeration of $M$ and $S_{\bar m}(M)$ here denotes the space of complete extensions of $\tp({\bar m})$ over $M$).
The class of KPT-extremely amenable, $\omega$-categorical theories $T$  is not at present explicitly classified, but appears to be   special (perhaps analogous to monadic stability in the stable world). 
Note that for an ($\omega$-categorical) KPT-extremely amenable theory $T$, whenever $\lL'$ is a countable language extending the language $\lL$ of $T$ and $T'$ is a universal $\lL'$-theory consistent with $T$, then the countable model $M$ of $T$ has an expansion to a model of $T'$ in which the new symbols in $\lL'$ are interpreted as certain $\emptyset$-definable sets in $M$. Indeed, such an expansion is just a fixed point of the action of $\aut(M)$ on the compact (and non-empty) space of the expansions of $M$ to the models of $T'$.
In particular, KPT-extreme amenability of an $\omega$-categorical structure $M$  implies the existence of a $\emptyset$-definable linear ordering on  $M$.
By contrast, our first-order extreme amenability is a quite common property: if the  Fra\"{i}ss\'{e} limit of a Fra\"{i}ss\'{e} class with free (or, more generally, canonical) amalgamation is $\omega$-categorical, then its theory is extremely amenable (see the discussion after Corollary \ref{corollary: amenability implies relative definable amenability}); also, every theory $T$ expanded by constants for a model is extremely amenable (here, coheir extensions over this model are the required invariant types); similarly, every stable $T$ expanded by constants for an algebraically closed set in $T^{eq}$ is extremely amenable (non forking extensions witness it by stationary of all types over $\acl^{eq}(\emptyset)$); there are also many NIP or simple theories (e.g. the random graph) which are extremely amenable. 
 Although not explicitly named as extreme amenability, the property of 
 extendibility of types to invariant types
 has been frequently considered in the literature, and used notably for the elimination of imaginaries; see e.g.  \cite{HHM}.

Keisler measures play a big role in this paper (especially in the notion of first order amenability) and we generally assume that the reader is familiar with them. A Keisler measure on a sort (or definable set) $X$ over a model $M$ is simply a finitely additive (probability) measure on the Boolean algebra of definable (over $M$) subsets of $X$. As such it is a natural  generalization of a complete type over $M$ containing the formula defining $X$. As pointed out at the beginning of Section 4 of \cite{HrPi}, a Keisler measure on $X$ over $M$ is the ``same thing'' as a regular Borel probability measure on the space $S_{X}(M)$ of complete types over $M$ containing the formula defining $X$.  Keisler measures are completely natural  in model theory, but it took some time for them to be studied systematically.  They were introduced in Keisler's seminal paper \cite{Keisler} mainly in a stable and NIP environment, and later played an important role in \cite{HrPePi} in the solution of some conjectures relating o-minimal groups to compact Lie groups.\\

This paper contains the material in Section 4 of our preprint ``Amenability and definability''. Following the advice of editors and referees we have divided that preprint into two papers: the first being \cite{HKP1}, and the second being the current paper.

Since this manuscript first appeared on arXiv, a number of other works and conjectures appeared exploring the transposition from definably amenable definable groups to amenable theories or issues specific just for amenable theories or some notions introduced in this paper, e.g. see \cite{CHKKMPR}, \cite{Hr1}, \cite[Section 7]{Hr2}, \cite{KLM}, \cite{KP}, \cite{GHK}, and open questions there. Let us only remark here that  in \cite{CHKKMPR} we constructed the first example of a non definably amenable group in a simple theory which by ``adding an affine sort construction'' yielded an example of a nonamenable simple theory. The above paper also reveals a different behavior of  definable amenability of groups and theories: every group definable in a small theory is definably amenable but not every small theory is amenable.





\section{Preliminaries on $G$-compactness}\label{section: preliminaries on G-compactness}

We only recall a few basic definitions and facts about Lascar strong types and Galois groups. For more details the reader is referred to  \cite{LaPi}, \cite{CLPZ} or \cite{Zi}.

 
As usual, by a monster model of a given complete  theory we mean a $\kappa$-saturated and strongly $\kappa$-homogeneous model for a sufficiently large cardinal $\kappa$ (typically, $\kappa>|T|$ is a strong limit cardinal).  Where recall that the (standard) expression ``strongly $\kappa$-homogeneous'' means that any partial elementary map between subsets of the model of cardinality $<\kappa$ extends to an automorphism of the model. 
A set [tuple] is said to be small [short] if it is of bounded cardinality (i.e. $<\kappa$). 

Let $\C$ be a monster model of a complete theory $T$.

\begin{dfn}\label{definition: Gal_L}$\,$
		\begin{enumerate}[label=\roman{*}),nosep]
			\item
			{\em The group of Lascar strong automorphisms}, which is denoted by $\autf_L(\C)$, is the subgroup of $\aut(\C)$ which is generated by all automorphisms fixing a small submodel of $\C$ pointwise, i.e.\ $\autf_L(\C)=\langle \sigma : \sigma \in \aut(\C/M)\;\, \mbox{for a small}\;\, M\prec \C\rangle$.
			\item
			{\em The Lascar Galois group of $T$}, which is denoted by $\gal_L(T)$, is the quotient group $\aut(\C)/\autf_L(\C)$ (which makes sense, as $\autf_L(\C)$ is a normal subgroup of $\aut(\C)$). It turns out that $\gal_L(T)$ does not depend on the choice of $\C$ (e.g. see \cite[Fact 4.2]{LaPi}).
		\end{enumerate}
	\end{dfn}

The orbit equivalence relation of $\autf_L(\C)$ acting on any given product $S$ of boundedly (i.e. less than the degree of saturation of $\C$) many sorts of $\C$ is usually denoted by $E_L$. It turns out that this is the finest bounded (i.e. with boundedly many classes), invariant equivalence relation on $S$ (see \cite[Proposition 5.4]{KiPi}); and the same is true after the restriction to the set of realizations of any type in $S(\emptyset)$ or even to any invariant set. The classes of $E_L$ are called {\em Lascar strong types}. It turns out that $\autf_L(\C)$ coincides with the the group of all automorphisms fixing setwise all $E_L$-classes on all (possibly infinite) products of sorts. 
So we see that $\gal_L(T)$ can be identified with the group of elementary (i.e. induced by $\aut(\C)$) permutations of all Lascar strong types (as written in the introduction).

For any small $M \prec \C$ enumerated as $\bar m$, we have a natural surjection from $S_{\bar m}(M):=\{ p \in S(M): \tp(\bar m /\emptyset) \subseteq p\}$ to $\gal_L(T)$ given by $\tp(\sigma(\bar m)/M) \mapsto \sigma/\autf_L(\C)$ for $\sigma \in \aut(\C)$. We can equip $\gal_L(T)$ with the quotient topology induced by this surjection, and it is easy to check that this topology does not depend on the choice of $M$. 
In this way, $\gal_L(T)$ becomes a quasi-compact (so not necessarily Hausdorff) topological group (see \cite{Zi} for a detailed exposition).


\begin{dfn}\label{definition: Gal_KP}$\,$
		\begin{enumerate}[label=\roman{*}),nosep]
			\item By $\gal_0(T)$ we denote the closure of the identity in $\gal_L(T)$.
\item
			{\em The group of Kim-Pillay strong automorphisms}, which is denoted by $\autf_{KP}(\C)$, is the preimage of $\gal_0(T)$ under the quotient homomorphism $\aut(\C) \to \gal_L(T)$.
			\item
			{\em The Kim-Pillay Galois group of $T$}, which is denoted by $\gal_{KP}(T)$, is the quotient group $\gal_L(T)/\gal_0(T) \cong \aut(\C)/\autf_{KP}(\C)$ equipped with the quotient topology. It is a compact, Hausdorff topological group.
		\end{enumerate}
	\end{dfn}
 
The orbit equivalence relation of $\autf_{KP}(\C)$ acting on any given product $S$ of (boundedly many) sorts of $\C$ is usually denoted by $E_{KP}$. It turns out that this is the finest bounded (i.e. with boundedly many classes), type-definable over $\emptyset$ equivalence relation on $S$; and the same is true after the restriction to the set of realizations of any type in $S(\emptyset)$ (see \cite[Lemma 4.18]{LaPi}). The classes of $E_{KP}$ are called {\em Kim-Pillay strong types}. It turns out that $\autf_{KP}(\C)$ coincides with the the group of all automorphisms fixing setwise all $E_{KP}$-classes on all (possibly infinite) products of sorts.
So we see that $\gal_{KP}(T)$ can be identified with the group of elementary permutations of all Kim-Pillay strong types (as written in the introduction).

The theory $T$ is said to be {\em $G$-compact} if the following {\em equivalent} conditions hold.

\begin{enumerate}
\item $\autf_{L}(\C)=\autf_{KP}(\C)$.
\item $\gal_L(T)$ is Hausdorff.
\item Lascar strong types coincide with Kim-Pillay strong types on any (possibly infinite) products of sorts.
\end{enumerate}

Let us briefly explain why the above conditions are equivalent. (1) $\leftrightarrow$ (2) follows from Definitions \ref{definition: Gal_L} and \ref{definition: Gal_KP}. (1) $\rightarrow$ (3) follows from the above definitions of $E_L$ and $E_{KP}$ as the orbit equivalence relations of $\autf_L(\C)$ and $\autf_{KP}(\C)$, respectively. Finally, (3) $ \rightarrow$ (1)  holds, because $\autf_L(\C)$ and $\autf_{KP}(\C)$ are the kernels of the actions of $\aut(\C)$ on the Lascar and Kim-Pillay strong types, respectively.

Lascar's definition of $G$-compactness from \cite{La} corresponds in our terminology to saying that $T$ remains $G$-compact after naming any finite set of parameters. Example \ref{example: naming parameters kills G-compactness} yields a $G$-compact theory with a non $G$-compact expansion by a single constant.

By the definition of $E_L$, we see that $\bar \alpha \, E_L\, \bar \beta$ if and only if there are $\bar \alpha_0=\bar \alpha, \bar \alpha_1,\dots, \bar \alpha_n = \bar \beta$ and models $M_0,\dots,M_{n-1}$ such that 
$$\bar \alpha_0 \equiv_{M_0} \bar \alpha_1 \equiv_{M_1}  \dots \bar \alpha_{n-1} \equiv_{M_{n-1}} \bar \alpha_n.$$ 
In this paper, by the {\em Lascar distance} from $\bar \alpha$ to $\bar \beta$  (denoted by $d_L(\bar \alpha,\bar \beta)$) we mean the smallest natural number $n$ as above. By the {\em Lascar diameter} of a Lascar strong type $[\bar \alpha]_{E_L}$ we mean the supremum of $d_L(\bar \alpha, \bar \beta)$ with $\bar \beta$ ranging over $[\bar \alpha]_{E_L}$. It is well known (proved in \cite{Ne}) that $[\bar \alpha]_{E_L}=[\bar \alpha]_{E_{KP}}$ if and only if the Lascar diameter of $[\bar \alpha]_{E_L}$ is finite.

Throughout this paper, tuples of variables are often infinite; in particular, $\varphi(\bar x)$ means that the  
free variables of the  formula $\varphi$ are among those listed in $\bar{x}$.   It can be convenient to allow $\bar{x}$
to be infinite, though of course $\varphi$ has only  finitely many free variables.  
 By a {\em finitary} type, we mean a type in finitely many variables.  We generally have a fixed underlying complete theory $T$ 
 in the background;  a partial type for $T$ can be assumed to include the sentences of $T$.  In any case   
for partial types $\pi_1(\bar x)$ and $\pi_2(\bar x)$,  
  $\pi_1(\bar x) \vdash \pi_2(\bar x)$ means by definition that $T \cup \pi_1(\bar x)$ logically implies $ \pi_2(\bar x)$.




\section{Amenable theories: definitions and basic results}\label{Subsection 4.1}

As usual, $\C$ is a monster model of an arbitrary complete theory $T$. Let $\bar c$ be an enumeration of $\C$ and let $S_{\bar c}(\C)=\{ \tp(\bar a/\C) \in S(\C): \bar a \equiv \bar c\}$. More generally, for a partial type $\pi(\bar x)$ over $\emptyset$, put $S_\pi(\C) =\{ q(\bar x) \in S(\C) : \pi \subseteq q\}$. 
If $p(\bar x) \in S(\emptyset)$ and $\bar \alpha \models p$, then $S_{\bar \alpha}(\C):=S_p(\C) =\{ q(\bar x) \in S(\C) : p \subseteq q\}$. (Note that we allow here tuples $\bar x$ of unbounded length (i.e. greater than the degree of saturation of $\C$). Each $S_\pi(\C)$ is naturally an $\aut(\C)$-flow, i.e.
a compact space together with a continuous action of the group $\aut(\C)$ equipped with the pointwise convergence (equivalently, product) topology.

Let us start from the local version of amenability.

\begin{dfn}\label{definition: amenability of types}
A partial  type $\pi(\bar x)$  over $\emptyset$ is {\em amenable} if there is an $\aut(\C)$-invariant, Borel probability measure on $S_\pi(\C)$.
\end{dfn}

Let $\mu$ be a measure as in Definition \ref{definition: amenability of types}. Recall that the restriction of $\mu$ to the Baire sets is regular \cite[Theorem 7.1.5]{Dud}. Next, this restriction extends to a unique regular Borel probability measure $\nu$ (e.g. see \cite[Theorem 7.3.1]{Dud}). By the construction in the proof of \cite[Theorem 7.3.1]{Dud} and $\aut(\C)$-invariance of $\mu$, we get that $\nu$ is $\aut(\C)$-invariant. Thus, in Definition \ref{definition: amenability of types}, we can equivalently require a witnessing measure to be {\em regular} which we usually do.

\begin{rem}\label{remark: equivalent definitions of amenability of a type}
The following conditions are equivalent for a type $\pi(\bar x)$ over $\emptyset$. 
\begin{enumerate}
\item $\pi(\bar x)$ is amenable.
\item There is an $\aut(\C)$-invariant, Borel (regular) probability measure $\mu$ on $S_{\bar x}(\C)$ concentrated on $S_\pi(\C)$, i.e. for any formula $\varphi(\bar x, \bar a)$ inconsistent with $\pi(\bar x)$, $\mu([\varphi(\bar x,\bar a)])=0$ (where $[\varphi(\bar x,\bar a)]$ is the subset of $S_{\bar x}(\C)$ consisting of all types containing $\varphi(\bar x, \bar a)$).
\item 
There is an $\aut(\C)$-invariant, finitely additive probability measure on relatively $\C$-definable subsets of $\pi(\bar x)$.
\item 
There is an $\aut(\C)$-invariant, finitely additive probability measure on $\C$-definable sets in variables $\bar x$, concentrated on $\pi(\bar x)$ (i.e. for any formula $\varphi(\bar x, \bar a)$ inconsistent with $\pi(\bar x)$, $\mu(\varphi(\bar x,\bar a))=0$).
\end{enumerate}
\end{rem}

\begin{proof}
Follows easily using the fact (see \cite[Proposition 416Q(a)]{Fre} or \cite[Chapter 7.1]{Si}) that whenever $G$ acts by homeomorphisms on a compact, Hausdorff, 0-dimensional space $X$, then each $G$-invariant, finitely additive probability measure on the Boolean algebra of clopen subsets of $X$ extends to a $G$-invariant, Borel (regular) probability measure on $X$.
\end{proof}


Thus, by a {\em global $\aut(\C)$-invariant Keisler measure extending $\pi(\bar x)$} we mean a measure from any of the items of Remark \ref{remark: equivalent definitions of amenability of a type}. And similarly working over any model $M$ in place of $\C$. 

In order to emphasize that a Keisler measure $\mu$ is defined on a type space in variables $\bar x$, sometimes we will write $\mu_{\bar x}$.

\begin{prop}\label{proposition: absoluteness of amenability of a type}
Amenability of a given type $\pi(\bar x)$ (over $\emptyset$) is absolute in the sense that it does not depend on the choice of the monster model $\C$. It is also equivalent to the amenability of $\pi(\bar x)$ computed with respect to an $\aleph_0$-saturated and strongly $\aleph_0$-homogeneous model $M$ in place of $\C$.
\end{prop}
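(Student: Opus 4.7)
The plan is to leverage condition (4) of Remark~\ref{remark: equivalent definitions of amenability of a type}: amenability of $\pi$ is equivalent to the existence of an $\aut(\C)$-invariant, finitely additive probability measure $\mu$ on the Boolean algebra of $\C$-definable subsets in variables $\bar x$, concentrated on $\pi$. The key observation is that by $\aut(\C)$-invariance together with strong $\kappa$-homogeneity of $\C$, the value $\mu(\varphi(\bar x, \bar a))$ depends only on $\tp(\bar a/\emptyset)$, and similarly over any $\aleph_0$-saturated, strongly $\aleph_0$-homogeneous $M$. Thus amenability should reduce to a condition referring only to finitary $\emptyset$-types and the theory $T$, and so is model-independent.

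For the reduction from $\C$ to a submodel $M \prec \C$ that is $\aleph_0$-saturated and strongly $\aleph_0$-homogeneous, I would simply restrict any witnessing $\mu$ on $\C$-definable sets to $M$-definable sets. Concentration on $\pi$ is immediate, since inconsistency with $\pi$ in $M$ already implies inconsistency with $\pi$ in $\C$. For $\aut(M)$-invariance, given $\sigma \in \aut(M)$ and a formula $\varphi(\bar x, \bar a)$ with finite $\bar a \in M$, the partial elementary map $\bar a \mapsto \sigma(\bar a)$ lifts by strong $\kappa$-homogeneity of $\C$ to some $\tau \in \aut(\C)$, yielding $\mu(\varphi(\bar x, \sigma(\bar a))) = \mu(\varphi(\bar x, \tau(\bar a))) = \mu(\varphi(\bar x, \bar a))$.

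For the converse, given an $\aut(M)$-invariant measure $\mu_M$ concentrated on $\pi$, I would extend it to $\C$-definable sets as follows. For each formula $\varphi(\bar x, \bar a)$ with finite $\bar a \in \C$, use $\aleph_0$-saturation of $M$ to pick $\bar a' \in M$ with $\tp(\bar a'/\emptyset) = \tp(\bar a/\emptyset)$ and set $\mu(\varphi(\bar x, \bar a)) := \mu_M(\varphi(\bar x, \bar a'))$. When several formulas are considered at once, one picks a single $\bar a' \in M$ realizing the \emph{joint} $\emptyset$-type of all the parameters involved; well-definedness then follows from strong $\aleph_0$-homogeneity of $M$ together with $\aut(M)$-invariance of $\mu_M$, and finite additivity is preserved because disjointness and equality of definable sets are determined by the joint $\emptyset$-type of their parameters. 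Concentration on $\pi$ and $\aut(\C)$-invariance are immediate from the type-dependence of $\mu$. The claim that amenability does not depend on the specific monster $\C$ then follows by relating two monster models through any common auxiliary $M$.

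The main obstacle, while essentially routine, is the bookkeeping in this last extension step: one must always work with the \emph{joint} $\emptyset$-type of all parameters occurring in a given finite collection of formulas so that Boolean relations among definable sets transfer faithfully between $M$ and $\C$. Once this is handled, everything else is a direct consequence of strong $\aleph_0$-homogeneity plus $\aleph_0$-saturation of $M$.
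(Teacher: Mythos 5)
Your proof is correct and uses the same core mechanism as the paper: transferring the measure by matching joint $\emptyset$-types of finite parameter tuples, with $\aleph_0$-saturation supplying parameters in the small model and strong $\aleph_0$-homogeneity together with $\aut$-invariance ensuring well-definedness, additivity, and concentration on $\pi$. The paper simply phrases this as one symmetric transfer between two arbitrary $\aleph_0$-saturated, strongly $\aleph_0$-homogeneous models $M$ and $M'$ (of which $\C$ is a special case), which subsumes both of your directions at once.
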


\begin{proof}
Let $M$ and $M'$ be two $\aleph_0$-saturated and strongly $\aleph_0$-homogeneous models.
Assume that there is an $\aut(M)$-invariant, Borel (regular) probability measure $\mu$ on $S_\pi(M)$. 
We want to find such an $\aut(M')$-invariant measure $\mu'$ on $S_\pi(M')$. 

Consider any formula $\varphi(\bar x, \bar a')$ with $\bar a' \in M'$. Choose (using the $\aleph_0$-saturation of $M$) any $\bar a \in M$ such that $\bar a' \equiv \bar a$, and define
$$\mu'([\varphi(\bar x, \bar a')] \cap S_{\pi}(M')):= \mu([\varphi(\bar x, \bar a)] \cap S_{\pi}(M)).$$ 
By the strong $\aleph_0$-homogeneity of $M$ and $\aut(M)$-invariance of $\mu$, we see that $\mu'$ is well-defined and $\aut(M')$-invariant. It is also clear that $\mu'(S_\pi(M'))=1$. It remains to check $\mu'$ is finitely additive on clopen subsets (as then $\mu'$ extends to the desired Borel measure). Take $\varphi(\bar x, \bar a')$ and $\psi(\bar x, \bar a')$ such that $[\varphi(\bar x, \bar a')] \cap S_{\pi}(M')$ is disjoint from $[\psi(\bar x, \bar a')] \cap S_{\pi}(M')$. This just means that $\varphi(\bar x, \bar a') \wedge \psi(\bar x, \bar a')$ is inconsistent with $\pi(\bar x)$. Take $\bar a \in M$ such that $\bar a \equiv \bar a'$. Then  $\varphi(\bar x, \bar a) \wedge \psi(\bar x, \bar a)$ is still inconsistent with $\pi(\bar x)$, so  
the obvious computation using additivity of $\mu$ yields: $\mu'(([\varphi(\bar x, \bar a')] \cap S_{\pi}(M')) \cup ([\psi(\bar x, \bar a')] \cap S_{\pi}(M'))) = \mu'([\varphi(\bar x, \bar a')] \cap S_{\pi}(M')) + \mu'([\psi(\bar x, \bar a')] \cap S_{\pi}(M'))$.
\end{proof}

\begin{prop} \label{remark:sufficiency of countable, homogeneous models}  Assume $T$ to be countable, and let $\pi(\bar x)$ be a partial type. Then $\pi(\bar x)$ is amenable if and only if for all [sufficiently large] countable, ($\aleph_{0}$-)homogeneous models $M$, $\pi(\bar x)$ has an extension to a Keisler measure $\mu_{\bar x}$ over $M$ which is $\aut(M)$-invariant. If $T$ is uncountable,
the same is true but with ``countable, $\aleph_0$-homogeneous models'' replaced by ``strongly $\aleph_0$-homogeneous models of cardinality at most $|T|$''.
\end{prop}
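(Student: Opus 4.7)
The plan is to establish both directions by moving between the monster $\C$ and an appropriate submodel $M$. For the forward direction, suppose $\pi$ is amenable, witnessed by an $\aut(\C)$-invariant Keisler measure $\mu$ on $S_\pi(\C)$. Given a countable $\aleph_0$-homogeneous $M$ (respectively, in the uncountable case, a strongly $\aleph_0$-homogeneous $M$ of cardinality $\le |T|$), embed $M \prec \C$. A standard back-and-forth argument shows that every countable $\aleph_0$-homogeneous model is automatically strongly $\aleph_0$-homogeneous, so each $\sigma \in \aut(M)$ extends to some $\tilde\sigma \in \aut(\C)$ by strong $\kappa$-homogeneity of $\C$. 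The pushforward $r_*\mu$ along the restriction map $r\colon S_\pi(\C) \to S_\pi(M)$ is then an $\aut(M)$-invariant Keisler measure over $M$ extending $\pi$, as desired.

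The converse is the substantive direction, and the plan is a finite intersection / compactness argument in the compact Hausdorff space $\mathcal K$ of Keisler measures on $S_\pi(\C)$ extending $\pi$. For each finite $F \subseteq \aut(\C)$ and each finite collection $\Phi$ of formulas $\varphi(\bar x, \bar a)$ with parameters from $\C$, let $C_{F,\Phi} \subseteq \mathcal K$ be the closed subset of those $\mu$ satisfying $\mu([\varphi(\bar x, \bar a)]) = \mu([\varphi(\bar x, \sigma(\bar a))])$ for all $\varphi(\bar x, \bar a) \in \Phi$ and $\sigma \in F$. The family $\{C_{F,\Phi}\}$ is downward directed, so provided each $C_{F,\Phi}$ is nonempty, compactness of $\mathcal K$ delivers a measure in $\bigcap_{F,\Phi} C_{F,\Phi}$, yielding an $\aut(\C)$-invariant extension of $\pi$ and hence amenability.

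To produce a member of $C_{F,\Phi}$, let $A$ be the finite set of all parameters in $\Phi$ together with their images under elements of $F$. By downward L\"{o}wenheim--Skolem combined with the standard back-and-forth construction of $\aleph_0$-homogeneous models, select a countable $\aleph_0$-homogeneous $M \prec \C$ with $A \subseteq M$ and large enough to fall under the hypothesis (analogously in the uncountable case, with $M$ strongly $\aleph_0$-homogeneous of cardinality $\le |T|$). The hypothesis furnishes an $\aut(M)$-invariant Keisler measure $\mu_M$ on $S_\pi(M)$. Since $M$ is strongly $\aleph_0$-homogeneous, each partial elementary map $\sigma \restr A$ (for $\sigma \in F$) extends to some $\tau_\sigma \in \aut(M)$, agreeing with $\sigma$ on all parameters of $\Phi$. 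Lift $\mu_M$ to a measure $\mu \in \mathcal K$ using surjectivity of the restriction map on Keisler measures (a standard consequence of Hahn--Banach, plus the equivalence, for unital linear functionals on $C(X)$, of positivity and norm at most one, applied to the isometric unital embedding $C(S_\pi(M)) \hookrightarrow C(S_\pi(\C))$). By construction, $\mu$ agrees with $\mu_M$ on clopen sets with parameters in $M$, and the $\aut(M)$-invariance of $\mu_M$ applied via the $\tau_\sigma$ then places $\mu$ in $C_{F,\Phi}$.

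The main conceptual step, and the one requiring care, is the reduction of full $\aut(\C)$-invariance to finitely many simultaneous constraints, each handled by a single countable homogeneous submodel chosen to absorb both the parameters of $\Phi$ and their $F$-images; once this reduction is in place, both the extension of finite partial elementary maps (via strong $\aleph_0$-homogeneity of $M$) and the lifting of Keisler measures (via a positive Hahn--Banach argument) are essentially routine.
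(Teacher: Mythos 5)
Your proof is correct and takes essentially the same approach as the paper's: both establish the nontrivial direction by compactness of the space of global Keisler measures in variables $\bar x$, producing a global invariant measure from the hypothesized $\aut(M)$-invariant measures over suitable countable homogeneous models $M$. The paper packages the compactness argument as a convergent subnet of arbitrary global extensions of the measures $\mu_M$ indexed by the models $M$ (verifying $\aut(\C)$-invariance of the limit by contradiction), while you express the same thing via the finite-intersection property of the constraint sets $C_{F,\Phi}$, choosing for each finite piece of data a model absorbing the parameters of $\Phi$ and their $F$-images; these are equivalent formulations of the same argument.
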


Before we prove it, let us explain a few terms from the formulation. By {\em sufficiently large} models we mean the models from some class $\mathcal{M}$ of models which is closed under isomorphisms and such that for every finitary type $p \in S_n(\emptyset)$ there is a model $M \in \mathcal{M}$ with $p(M) \ne \emptyset$. A model $M$ is said to be {\em homogeneous} if for every finite tuples $\bar a \equiv \bar b$ from $M$ and $c \in M$ there is $d \in M$ with $\bar a c \equiv \bar b d$. If $M$ is countable, this is equivalent to strong $\aleph_0$-homogeneity. For any $A \subset \C$ of cardinality $\leq |T|$, a standard back-and-forth construction produces a strongly $\aleph_0$-homogeneous model $N$ of cardinality $\leq |T|$ and containing $A$.

\begin{proof}
For each [sufficiently large] countable homogeneous model $M\prec \C$, let $\mu_{M}$ be an $\aut(M)$-invariant Keisler measure over $M$ extending $\pi(\bar x)$, and let ${\bar\mu}_M$ be an
arbitrary global Keisler measure extending $\mu_{M}$. Working in the compact space of global Keisler measures in variables $\bar x$, there is a subnet of the net $\{{\bar\mu}_M\}_M$ (with the index set ordered by inclusion), which converges to some $\bar\mu$. But then $\bar\mu$ is $\aut(\C)$-invariant:  Otherwise, for some formula $\phi({\bar x},{\bar y})$ and finite tuples $\bar a, \bar b$ in $\C$ with the same type, we have 
${\bar\mu}(\phi({\bar x},\bar a)) = r$ and ${\bar\mu}(\phi({\bar x},\bar b)) = s$ for some $r< s$. But then we can find some countable homogeneous model $M$ containing $\bar a,\bar b$ and such that 
${\bar\mu}_{M}(\phi({\bar x},\bar a)) < {\bar\mu}_M(\phi({\bar x},\bar b))$, contradicting the $\aut(M)$-invariance of $\mu_{M}$. 
\end{proof}

\begin{lem}\label{lemma: first lemma on amenable theories}
A type $\pi(\bar x)$ (over $\emptyset$) is amenable if and only if each formula $\varphi(\bar x)$ (without parameters) implied by $\pi(\bar x)$ is amenable. 
\end{lem}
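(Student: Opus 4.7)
The forward direction is essentially immediate: if $\mu$ witnesses amenability of $\pi(\bar x)$, view $\mu$ as an $\aut(\C)$-invariant Borel probability measure on $S_{\bar x}(\C)$ that is concentrated on $S_\pi(\C)$ in the sense of Remark \ref{remark: equivalent definitions of amenability of a type}(2). If $\varphi(\bar x)$ is a formula without parameters implied by $\pi$, then $S_\pi(\C) \subseteq [\varphi]$, so $\mu$ is automatically concentrated on $[\varphi]$, and its restriction to $S_\varphi(\C)$ is an $\aut(\C)$-invariant Borel probability measure witnessing amenability of $\varphi(\bar x)$.

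For the converse, my plan is a standard compactness/limit argument in the space $\M(S_{\bar x}(\C))$ of Borel probability measures on $S_{\bar x}(\C)$, which is compact in the weak$^*$ topology. Let $\Phi$ be the collection of formulas $\varphi(\bar x)$ without parameters that are implied by $\pi(\bar x)$. Since $\Phi$ is closed under finite conjunctions, it is directed by reverse implication: declare $\varphi_1 \leq \varphi_2$ iff $\varphi_2 \vdash \varphi_1$. For each $\varphi \in \Phi$, pick an $\aut(\C)$-invariant Borel probability measure $\mu_\varphi$ witnessing amenability of $\varphi$; view it as a measure $\tilde\mu_\varphi$ on all of $S_{\bar x}(\C)$ by extension by zero. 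Pass to a subnet $(\tilde\mu_{\varphi_i})_i$ converging weak$^*$ to some measure $\mu$.

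Since $\aut(\C)$-invariance is preserved by weak$^*$ limits (each $\tilde\mu_{\varphi_i}$ is invariant), $\mu$ is $\aut(\C)$-invariant. It remains to check that $\mu$ is concentrated on $S_\pi(\C)$ in the sense of Remark \ref{remark: equivalent definitions of amenability of a type}(2), i.e.\ that $\mu([\psi(\bar x, \bar a)]) = 0$ for every $\psi(\bar x, \bar a)$ inconsistent with $\pi$. Given such $\psi$, by the compactness theorem, there is a finite conjunction $\varphi \in \Phi$ of formulas from $\pi$ such that $\varphi(\bar x) \wedge \psi(\bar x, \bar a)$ is inconsistent. For every $\varphi' \in \Phi$ with $\varphi' \vdash \varphi$, the measure $\tilde\mu_{\varphi'}$ is concentrated on $[\varphi']\subseteq [\varphi]$, which is disjoint from the clopen set $[\psi(\bar x, \bar a)]$, so $\tilde\mu_{\varphi'}([\psi(\bar x, \bar a)]) = 0$. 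By cofinality this holds eventually along our subnet, and since $[\psi(\bar x, \bar a)]$ is clopen, weak$^*$ convergence yields $\mu([\psi(\bar x, \bar a)]) = 0$.

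The only mild subtlety I expect is bookkeeping around the two meanings of ``concentrated on $S_\pi(\C)$'': the measure-theoretic one ($\mu(S_\pi(\C)) = 1$) vs.\ the one used in Remark \ref{remark: equivalent definitions of amenability of a type}(2) (vanishing on every clopen set defined by a formula inconsistent with $\pi$). The argument above directly verifies the latter, which is precisely what is needed to apply Remark \ref{remark: equivalent definitions of amenability of a type} and conclude amenability of $\pi(\bar x)$.
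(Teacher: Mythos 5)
Your proposal is correct and follows essentially the same route as the paper's proof: the forward direction uses $S_\pi(\C)\subseteq S_\varphi(\C)$ to push the measure down, and the converse takes the net $(\mu_\varphi)_{\varphi}$ indexed by formulas implied by $\pi$ (ordered by implication), passes to a convergent subnet in the compact space of Keisler measures, and verifies invariance and concentration on $\pi$ exactly as you do. The extra remark about the two readings of ``concentrated'' and the explicit appeal to directedness and compactness are just spelled-out versions of steps the paper treats as immediate.
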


\begin{proof}
The implication $(\rightarrow)$ is obvious, as $S_\pi(\C) \subseteq S_\varphi(\C)$, and so for any formula $\psi(\bar x, \bar a)$ we can define $\mu'([\psi(\bar x,\bar a)]\cap S_{\varphi}(\C)):= \mu([\psi(\bar x,\bar a)]\cap S_{\pi}(\C))$, where $\mu$ is an $\aut(\C)$-invariant, Borel probability  measure on $S_\pi(\C)$.

$(\leftarrow).$ Let $\Pi$ be the set of formulas in variables $\bar x$ which are implied by $\pi(\bar x)$. By assumption, for every formula $\varphi(\bar x) \in \Pi$ we can choose an $\aut(\C)$-invariant Keisler measure $\mu_{\varphi}$ which is concentrated on $\varphi(\bar x)$. In the compact space of all Keisler measures in variables $\bar x$, there is a subnet of the net $(\mu_{\varphi})_{\varphi \in \Pi}$ (with $\Pi$ ordered by implication of formulas) which converges to some Keisler measure $\mu$. Since all $\mu_{\varphi}$ are $\aut(\C)$-invariant, so is $\mu$. In order to see that $\mu$ is concentrated on $\pi(\bar x)$, consider any formula $\psi(\bar x,\bar a)$ inconsistent with $\pi(\bar x)$. Then there is a formula $\varphi(\bar x) \in \Pi$ inconsistent with $\psi(\bar x, \bar a)$. Then every $\theta \in \Pi$ which is implied by $\varphi(\bar x)$ is also inconsistent with $\psi(\bar x, \bar a)$, whence  $\mu_{\theta}(\psi(\bar x, \bar a))=0$. Thus, $\mu(\psi(\bar x, \bar a))=0$.
\end{proof}

\begin{lem}\label{lemma: second lemma on amenable theories}
All types in $S(\emptyset)$ (possibly in unboundedly many variables) are amenable if and only if all finitary (i.e. in finitely many variables) types in $S(\emptyset)$ are amenable.
\end{lem}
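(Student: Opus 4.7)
The forward direction is trivial, since finitary types form a subclass of all types in $S(\emptyset)$. For the reverse direction, fix $p \in S_{\bar x}(\emptyset)$ with $\bar x$ possibly unbounded; the goal is to show $p$ is amenable. The plan is a finite-intersection argument in the compact space of Keisler measures on $S_{\bar x}(\C)$. The subset $K$ of $\aut(\C)$-invariant measures concentrated on $p$ is the intersection of closed conditions of the form $\{\mu : \mu(\varphi(\bar x, \bar a)) = \mu(\varphi(\bar x, \sigma \bar a))\}$ (one per $\sigma \in \aut(\C)$, formula $\varphi(\bar x, \bar y)$, and $\bar a \in \C$) together with conditions $\{\mu : \mu(\psi) = 0\}$ (one per formula $\psi$ inconsistent with $p$). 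By compactness it will suffice to show that every finite subintersection is non-empty.

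A finite subintersection involves only finitely many formulas (noting that the action of each $\sigma$ on parameters preserves variables), so it refers only to a finite subtuple $\bar x_F \subseteq \bar x$ of variables. Restricting $p$ to $\bar x_F$ gives a finitary type $p_F := p \restr \bar x_F \in S_{\bar x_F}(\emptyset)$, which is amenable by hypothesis, yielding an $\aut(\C)$-invariant Keisler measure $\mu_F$ on $S_{\bar x_F}(\C)$ concentrated on $p_F$.

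The next step is to lift $\mu_F$ to a Keisler measure $\mu$ on $S_{\bar x}(\C)$: fix an arbitrary tuple $\bar d \in \C$ indexed by $\bar x \setminus \bar x_F$ (of matching sorts) and set $\mu(\psi(\bar z, \bar a)) := \mu_F(\psi(\bar z \cap \bar x_F, \bar d_{\bar z \setminus \bar x_F}, \bar a))$ for any formula $\psi$ with finite free-variable tuple $\bar z \subseteq \bar x$. A routine check shows $\mu$ is a well-defined, finitely additive probability measure on definable subsets of $S_{\bar x}(\C)$, and agrees with $\mu_F$ whenever the formula's free variables lie in $\bar x_F$. Since the formulas constraining our finite subintersection all use only $\bar x_F$-variables, the $\aut(\C)$-invariance of $\mu_F$ and its concentration on $p_F$ transfer directly to $\mu$ on exactly these formulas, so $\mu$ witnesses non-emptiness of the subintersection.

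The main obstacle is precisely this lifting step: producing a globally $\aut(\C)$-invariant lift of $\mu_F$ is not evident, since any fixed choice of $\bar d$ breaks invariance under automorphisms that move $\bar d$. The compactness trick sidesteps this: it only requires invariance at the finitely many formulas defining the current subintersection, all of which live in $\bar x_F$, where $\mu$ coincides with the genuinely invariant $\mu_F$.
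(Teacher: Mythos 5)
Your proof is correct and follows essentially the same strategy as the paper's: a compactness argument reducing to finitely many variables, then invoking finitary amenability. The only cosmetic difference is that you work inside the compact space of Keisler measures (so that you must construct an explicit lift of $\mu_F$ to a measure on $S_{\bar x}(\C)$ via a fixed tuple $\bar d$, needed only on $\bar x_F$-formulas), whereas the paper works in the larger ambient space $[0,1]^{\{\varphi(\bar x,\bar a)\}}$ and can therefore extend a point arbitrarily; both realizations of the ``finite portion of information'' idea amount to the same thing.
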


\begin{proof}
The implication $(\rightarrow)$ is trivial. For the other implication, take $p(\bar x) \in S_{\bar x}(\emptyset)$.
Consider the compact space $X:= [0,1]^{\{\varphi(\bar x,\bar a): \;\varphi(\bar x, \bar y)\; \textrm{a formula}, \;\bar a \in \C\}}$ with the pointwise convergence topology (where $\bar x$ is the fixed tuple of variables). Then the $\aut(\C)$-invariant, finitely additive probability measures on $\C$-definable sets in variables $\bar x$ concentrated on $p(\bar x)$ form a closed subset $\mathcal{M}$ of $X$. We can present $\mathcal{M}$ as the intersection of a directed family of closed subsets of $X$ each of which witnessing a finite portion of information of being in $\mathcal{M}$. But each such finite portion of information involves only finitely many variables, so the corresponding closed set is nonempty by the assumption that all finitary types are amenable and Remark \ref{remark: equivalent definitions of amenability of a type}. By the compactness of $X$, we conclude that $\mathcal{M}$ is nonempty.
\end{proof}

\begin{cor}\label{corollary: equivalent definitions of amenable theory}
The following conditions are equivalent.
\begin{enumerate}
\item All partial types (possibly in unboundedly many variables) over $\emptyset$ are amenable.
\item All complete types (possibly in unboundedly many variables) over $\emptyset$ are amenable.
\item All finitary complete types over $\emptyset$ are amenable.
\item All consistent formulas (in finitely many variables $\bar x$) over $\emptyset$ are amenable.
\item $\tp(\bar c/\emptyset)$ is amenable (where recall that $\bar c$ is an enumeration of $\C$). 
\item $\tp(\bar m/\emptyset)$ is amenable for some tuple $\bar m$ enumerating a model.
\end{enumerate}
\end{cor}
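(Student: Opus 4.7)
I would prove the six conditions equivalent by arranging them into a short cycle of implications in which Lemmas~\ref{lemma: first lemma on amenable theories} and~\ref{lemma: second lemma on amenable theories} carry the only real content, and the remaining moves are trivial push-forwards of $\aut(\C)$-invariant measures along $\aut(\C)$-equivariant maps between type spaces. Concretely, I would close a loop through (1)--(4) using the two lemmas and then attach (5) and (6) to this loop using a single ``coordinate-projection'' observation.

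For the block (1)--(4): the implications (1) $\Rightarrow$ (2) $\Rightarrow$ (3) are immediate since complete types are partial types and finitary ones are a subclass of all. For (3) $\Rightarrow$ (4), extend a consistent formula $\varphi(\bar x)$ to a complete type $p \in S_{\bar x}(\emptyset)$; the inclusion $S_p(\C) \hookrightarrow S_\varphi(\C)$ is $\aut(\C)$-equivariant, so the push-forward of an invariant measure on $S_p(\C)$ is invariant on $S_\varphi(\C)$. For (4) $\Rightarrow$ (3), apply Lemma~\ref{lemma: first lemma on amenable theories} to any finitary complete $p$: every formula implied by $p$ lies in $p$ and is consistent, hence amenable by hypothesis. (3) $\Rightarrow$ (2) is exactly Lemma~\ref{lemma: second lemma on amenable theories}. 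Finally, (2) $\Rightarrow$ (1): given a partial type $\pi$, extend it to a complete type $p$ in the same variables and push forward the invariant measure on $S_p(\C)$ along the equivariant inclusion $S_p(\C) \hookrightarrow S_\pi(\C)$.

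For (5) and (6): the implications (1) $\Rightarrow$ (5) and (1) $\Rightarrow$ (6) are immediate because $\tp(\bar c/\emptyset)$ and $\tp(\bar m/\emptyset)$ lie in $S(\emptyset)$. For (5) $\Rightarrow$ (2), given any $p(\bar x) \in S(\emptyset)$, realize $p$ in $\C$ by some $\bar\alpha$; since $\bar c$ enumerates all of $\C$, the coordinates of $\bar\alpha$ are coordinates of $\bar c$, yielding an $\aut(\C)$-equivariant coordinate-projection $S_{\bar c}(\C) \twoheadrightarrow S_{\bar\alpha}(\C) = S_p(\C)$; the push-forward of the invariant measure on $S_{\bar c}(\C)$ is invariant, so $p$ is amenable. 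For (6) $\Rightarrow$ (3), the same push-forward applies, using that $M \models T$ realizes every complete finitary type over $\emptyset$, so any finitary $p$ is realized by a subtuple of $\bar m$; combining with (3) $\Rightarrow$ (2) $\Rightarrow$ (1) closes the circle.

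I do not expect any substantial obstacle. The only point that is not quite automatic is the $\aut(\C)$-equivariance of the coordinate-projection $S_{\bar c}(\C) \to S_{\bar\alpha}(\C)$, but this follows at once from the standard description of the $\aut(\C)$-action on type spaces as induced by its action on parameters, which commutes with restriction to any subset of variables. All genuine mathematical content is already packaged into the two preceding lemmas, so the corollary is essentially a bookkeeping assembly.
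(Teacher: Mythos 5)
Your cycle through (1)--(4) is correct and matches the paper's argument, and your treatment of (6) via a coordinate projection $S_{\bar m}(\C) \twoheadrightarrow S_p(\C)$ for a finitary $p$ realized by a subtuple of $\bar m$ is sound. The gap is in your (5) $\Rightarrow$ (2): you begin by ``realizing $p$ in $\C$ by some $\bar\alpha$,'' but condition (2) quantifies over complete types in \emph{unboundedly many} variables, and such a type need not be realized in $\C$ at all. For instance, in the theory of pure equality, the complete type over $\emptyset$ in $|\C|^+$-many variables asserting that all variables are pairwise distinct is consistent but has no realization in $\C$, so there is no subtuple $\bar\alpha$ of $\bar c$ to project onto. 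Saturation of $\C$ only guarantees realizations of types over $\emptyset$ in fewer than $\kappa$ variables, which is strictly weaker than what (2) covers.

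The fix is exactly the move you already make for (6): restrict the direct coordinate-projection argument to \emph{finitary} types, which are always realized in $\C$ (indeed, in any model), giving (5) $\Rightarrow$ (3), and then close the loop with your already-established (3) $\Rightarrow$ (2) $\Rightarrow$ (1). This is also essentially the paper's route: there one goes $(1)\to(5)\to(6)\to(4)$, with $(5)\to(6)$ trivial (since $\bar c$ enumerates a model) and $(6)\to(4)$ obtained by applying Lemma~\ref{lemma: first lemma on amenable theories} to the formulas satisfied by finite subtuples of $\bar m$ --- the same projection idea, merely packaged through Lemma~\ref{lemma: first lemma on amenable theories} rather than stated as a separate push-forward observation.
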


\begin{proof}
The equivalence $(1) \leftrightarrow (2)$ is obvious (for $(2) \to (1)$ use the argument as in the proof of $(\rightarrow)$ in Lemma  \ref{lemma: first lemma on amenable theories}).
The equivalence $(2) \leftrightarrow (3)$ is Lemma \ref{lemma: second lemma on amenable theories}. The equivalence $(3) \leftrightarrow (4)$ follows from Lemma \ref{lemma: first lemma on amenable theories}. The implications $(1) \to (5) \to (6)$ are trivial. 
Let us show $(6) \to (4)$. Extend $\bar m$ to a tuple $\bar n$ also consisting of the elements of $M$ but so that each element of $M$ is repeated infinitely many times. Since the restriction map from $S_{\bar n}(\C)$ to $S_{\bar m}(\C)$ is an isomorphism of $\aut(\C)$-flows, by (6), we get that $\tp(\bar n/\emptyset)$ is amenable. Hence, $\tp(\bar n'/ \emptyset)$ is also amenable for every subtuple $\bar n'$ of $\bar n$. Thus, using Lemma \ref{lemma: first lemma on amenable theories}, we obtain (4),  because taking all possible finite subtuples $\bar n'$ of $\bar n$ and $\varphi(\bar x') \in \tp (\bar n'/\emptyset)$, we will get all consistent formulas over $\emptyset$ (up to permutations of variables).
\end{proof}

\begin{dfn}
The theory $T$ is {\em amenable} if the equivalent conditions of Corollary \ref{corollary: equivalent definitions of amenable theory} hold.
\end{dfn}

By Proposition \ref{proposition: absoluteness of amenability of a type}, we see that amenability of $T$ is really a property of $T$, i.e. it does not depend on the choice of $\C$.

Analogously, one can define the stronger notion of an extremely amenable theory.

\begin{dfn}
A type $\pi(\bar x)$  over $\emptyset$ is {\em extremely amenable} if there is an $\aut(\C)$-invariant type in $S_\pi(\C)$. The theory $T$ is {\em extremely amenable} if every type (in any number of variables) in $S(\emptyset)$ is extremely amenable. 
\end{dfn}

As in the case of amenability, compactness arguments easily show that the notions of extremely amenable types and extremely amenable theories are both absolute (i.e. do not depend on the choice of $\C$), and, in fact, they can be tested on any $\aleph_0$-saturated and strongly $\aleph_0$-homogeneous model in place of $\C$; moreover, $T$ is extremely amenable if and only if all finitary types in $S(\emptyset)$ are extremely amenable.  
Note that Proposition \ref{remark:sufficiency of countable, homogeneous models} specializes to extremely amenable partial types, too.  So for countable theories, both amenability and extreme amenability can be seen at the level of countable models. It is also easy to see that in a stable theory, a type in $S(\emptyset)$ is extremely amenable if and only if it is stationary.\\

Yet another equivalent approach to amenability of $T$ is via $\aut(\C)$-invariant, finitely additive probability measures on the algebra of  so-called relatively definable subsets of $\aut(\C)$. This will be the exact analogue of the definition of definable amenability of definable groups (via the existence of an invariant Keisler  measure). We will use this approach in Section \ref{Subsection 4.2}.

The idea of identifying $\aut(\C)$ with the subset $\{\sigma (\bar c): \sigma \in \aut(\C)\}$ of $\C^{\bar c}$ and considering relatively definable subsets of $\aut(\C)$, i.e. subsets of the form $\{ \sigma \in \aut(\C): \C \models \varphi(\sigma(\bar c),\bar c)\}$ for a formula $\varphi(\bar x, \bar c)$, already appeared in \cite[Appendix A]{KrPiRz}. Here, we extend this notion of relative definability to the  local context and introduce an associated notion of amenability which is easily seen to be equivalent to the amenability of $T$ [or of a certain type in the extended local version]. 


Let $M$ be any model of $T$ and let $\bar m$ be its enumeration.

\begin{dfn}\label{definition: relatively definable sets}
i) By a {\em relatively definable subset} of $\aut(M)$ we mean a subset of the form $\{ \sigma \in \aut(M) : M \models \varphi(\sigma(\bar m), \bar m)\}$, where $\varphi(\bar x, \bar y)$ is a formula without parameters.\\
ii) If $\bar \alpha$ is a tuple of some  elements of $M$, by {\em relatively $\bar \alpha$-definable subset} of $\aut(M)$ we mean a subset of the form $\{ \sigma \in \aut(M) : M \models \varphi(\sigma(\bar \alpha), \bar m)\}$, where $\varphi(\bar x, \bar y)$ is a formula without parameters.
\end{dfn}

The above definition differs from  the standard terminology in which ``$A$-definable'' means ``definable over $A$''; here, ``relatively $\bar \alpha$-definable'' has nothing to do with the parameters over which the set is relatively definable. One should keep this  in mind from now on.

For a formula $\varphi(\bar x,\bar y)$ and tuples $\bar a$, $\bar b$ from $M$ corresponding to $\bar x$ and $\bar y$, respectively, we will use the following notation 
$$A_{\varphi,\bar a, \bar b} = \{ \sigma \in \aut(M) : M \models \varphi(\sigma(\bar a),\bar b) \}.$$ 
When $\bar x$ and $\bar y$ are of the same length (by which we mean that they are also of the same sorts) and $\bar a= \bar b$, then this set will be denoted by $A_{\varphi, \bar a}$. 

Note that for any tuple $\bar \alpha$ in $M$, the relatively $\bar \alpha$-definable subsets of $\aut(M)$ form a Boolean $\aut(M)$-algebra (i.e. a Boolean algebra closed under the action of $\aut(M)$ by left translations).

\begin{dfn}
i) The group $\aut(M)$ is said to be {\em relatively amenable} if there exists a left $\aut(M)$-invariant, finitely additive probability measure on the Boolean algebra of relatively definable subsets of $\aut(M)$.\\
ii)  If $\bar \alpha$ is a tuple of some elements of $M$, the group $\aut(M)$ is said to be {\em  $\bar \alpha$-relatively amenable} if there exists a left $\aut(M)$-invariant, finitely additive probability measure on the Boolean algebra of relatively $\bar \alpha$-definable subsets of $\aut(M)$. 
\end{dfn}

In particular, $\aut(M)$ being relatively amenable means exactly that it is $\bar m$-relatively amenable, where $\bar m$ is an enumeration of $M$.

We will mostly focus on the case when $M=\C$ is a monster model.
But often one can work in the more general context when $M$ is $\aleph_0$-saturated and strongly $\aleph_0$-homogeneous, including the case of the unique countable model of an $\omega$-categorical theory.

\begin{prop}\label{remark: relative definable amenability}
Let $M$ be $\aleph_0$-saturated and strongly $\aleph_0$-homogeneous enumerated as $\bar m$. Let $\bar \alpha$ be a tuple of some elements of $M$. Then we have:
\begin{enumerate}
\item The Boolean $\aut(M)$-algebra of clopen subsets of $S_{\bar \alpha}(M)$ is isomorphic to the Boolean $\aut(M)$-algebra of relatively $\bar \alpha$-definable subsets of $\aut(M)$.
\item The group $\aut(M)$ is $\bar \alpha$-relatively amenable if and only if there is an $\aut(M)$-invariant, (regular) Borel probability measure on $S_{\bar \alpha}(M)$ (equivalently, $\tp(\bar \alpha/\emptyset)$ is amenable). 
In particular, $\aut(M)$ is relatively amenable if and only if there is an $\aut(M)$-invariant, (regular) Borel probability measure on $S_{\bar m}(M)$ (equivalently, $T$ is amenable).
\end{enumerate}
\end{prop}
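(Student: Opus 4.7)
The plan is to prove (1) by exhibiting an explicit $\aut(M)$-equivariant Boolean algebra isomorphism between the clopen subsets of $S_{\bar\alpha}(M)$ and the relatively $\bar\alpha$-definable subsets of $\aut(M)$, and then to derive (2) by composing this isomorphism with the standard extension of an invariant finitely additive clopen measure to an invariant regular Borel measure (which was already invoked in the proof of Remark \ref{remark: equivalent definitions of amenability of a type}). The final ``equivalently'' clause then reduces to an application of Proposition \ref{proposition: absoluteness of amenability of a type}.

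For (1), I would define the map by sending the clopen $[\varphi(\bar x,\bar b)] \cap S_{\bar\alpha}(M)$, where $\varphi$ has only finitely many free variables and $\bar b$ is the corresponding finite tuple from $M$, to the relatively $\bar\alpha$-definable set $A_{\varphi,\bar\alpha,\bar b} = \{\sigma \in \aut(M) : M \models \varphi(\sigma(\bar\alpha),\bar b)\}$. Well-definedness is immediate because for every $\sigma \in \aut(M)$ the type $\tp(\sigma(\bar\alpha)/M)$ lies in $S_{\bar\alpha}(M)$, so two formulas defining the same clopen of $S_{\bar\alpha}(M)$ agree on the element $\sigma(\bar\alpha)$ of $M$. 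Surjectivity is tautological, Boolean operations are preserved on the nose, and a direct calculation yields $\tau \cdot A_{\varphi,\bar\alpha,\bar b} = A_{\varphi,\bar\alpha,\tau(\bar b)}$, matching the action $\tau \cdot [\varphi(\bar x,\bar b)] = [\varphi(\bar x,\tau(\bar b))]$ on the type-space side.

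The crux, and the main place where the hypotheses on $M$ enter, is injectivity. If two clopens differ, there is a type $p \in S_{\bar\alpha}(M)$ containing, say, $\varphi(\bar x,\bar b) \wedge \neg\psi(\bar x,\bar c)$. Because this formula has only finitely many free variables, one can restrict attention to the corresponding finite subtuple $\bar\alpha'$ of $\bar\alpha$; the restricted formula is consistent with $\tp(\bar\alpha'/\emptyset)$. Now $\aleph_0$-saturation of $M$ provides a realization $\bar\beta' \in M$ with $\bar\beta' \equiv \bar\alpha'$ and $M \models \varphi(\bar\beta',\bar b) \wedge \neg\psi(\bar\beta',\bar c)$, and strong $\aleph_0$-homogeneity upgrades the partial map $\bar\alpha' \mapsto \bar\beta'$ to an automorphism $\sigma \in \aut(M)$; this $\sigma$ separates $A_{\varphi,\bar\alpha,\bar b}$ from $A_{\psi,\bar\alpha,\bar c}$.

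Part (2) then follows almost formally: via the isomorphism from (1), an $\aut(M)$-invariant finitely additive probability measure on relatively $\bar\alpha$-definable sets transports to such a measure on the clopen algebra of $S_{\bar\alpha}(M)$, which extends uniquely to an $\aut(M)$-invariant regular Borel probability measure on $S_{\bar\alpha}(M)$ by the extension fact cited above; the converse direction is trivial. The equivalence with amenability of $\tp(\bar\alpha/\emptyset)$ is Proposition \ref{proposition: absoluteness of amenability of a type}, and the ``in particular'' clause is the specialization $\bar\alpha = \bar m$ together with Corollary \ref{corollary: equivalent definitions of amenable theory}. The only genuinely subtle point in the whole argument is the reduction to a finite subtuple of $\bar\alpha$ in the injectivity step; everything else is routine bookkeeping.
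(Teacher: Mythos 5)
Your proof is correct and takes essentially the same route as the paper: define the Boolean algebra homomorphism $[\varphi(\bar x,\bar b)]\cap S_{\bar\alpha}(M) \mapsto A_{\varphi,\bar\alpha,\bar b}$, check equivariance and that it is an isomorphism via $\aleph_0$-saturation plus strong $\aleph_0$-homogeneity, then deduce (2) by transporting the measure through this isomorphism and invoking the clopen-to-Borel extension fact, Proposition \ref{proposition: absoluteness of amenability of a type}, and Corollary \ref{corollary: equivalent definitions of amenable theory}. The paper leaves the injectivity step as an easy exercise; you have spelled out the reduction to a finite subtuple and the saturation/homogeneity argument, which is exactly the intended filling-in.
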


\begin{proof}

(1) The assignment $[\varphi(\bar x, \bar m)] \mapsto A_{\varphi,\bar \alpha,\bar m}$ clearly defines a homomorphism between the two Boolean $\aut(M)$-algebras in question (and this does require any assumptions on $M$). The fact that it is an isomorphism follows easily from  $\aleph_0$-saturation and strongly $\aleph_0$-homogeneity of $M$.

(2) By (1), $\aut(M)$ is $\bar \alpha$-relatively amenable if and only if  there is an $\aut(M)$-invariant, finitely additive probability measure on the algebra of clopen subsets of $S_{\bar \alpha}(M)$ which in turn is equivalent to the existence of an $\aut(M)$-invariant, (regular) Borel probability measure on $S_{\bar \alpha}(M)$. The fact that the existence of an $\aut(M)$-invariant, (regular) Borel probability measure on $S_{\bar \alpha}(M)$ is equivalent to amenability of $\tp(\bar \alpha/\emptyset)$ follows from Proposition \ref{proposition: absoluteness of amenability of a type}. And then, the fact that the existence of an $\aut(M)$-invariant, (regular) Borel probability measure on $S_{\bar m}(M)$ is equivalent to amenability of $T$ follows from Corollary \ref{corollary: equivalent definitions of amenable theory}.
\end{proof}

So the terminologies ``$\aut(M)$ is [$\bar \alpha$-]relatively amenable'' and ``$T$ [resp. $\tp(\bar \alpha/\emptyset)$] is amenable'' will be used interchangeably.

\begin{cor}[]
[For a given tuple $\bar \alpha$, $\bar \alpha$-]relative amenability of $\aut(M)$ for an $\aleph_0$-saturated and strongly $\aleph_0$-homogeneous model $M$ [containing $\bar \alpha$] does not depend on the choice of $M$.
\end{cor}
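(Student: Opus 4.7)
The plan is to deduce this corollary directly from the two main absoluteness results already established in this section, with essentially no extra work.

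Suppose $M$ and $M'$ are both $\aleph_0$-saturated and strongly $\aleph_0$-homogeneous, each containing a realization of $\tp(\bar\alpha/\emptyset)$. By Proposition \ref{remark: relative definable amenability}(2) applied to $M$, the group $\aut(M)$ is $\bar\alpha$-relatively amenable if and only if $\tp(\bar\alpha/\emptyset)$ is amenable (i.e.\ there exists an $\aut(\C)$-invariant, Borel probability measure on $S_{\bar\alpha}(\C)$). Applying the same proposition to $M'$ (and any tuple $\bar\alpha'$ in $M'$ realizing $\tp(\bar\alpha/\emptyset)$, which exists by $\aleph_0$-saturation), we get the analogous equivalence for $M'$. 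Since amenability of a type over $\emptyset$ is a property of the theory by Proposition \ref{proposition: absoluteness of amenability of a type}, the two equivalent conditions coincide, so $\bar\alpha$-relative amenability of $\aut(M)$ and of $\aut(M')$ are equivalent.

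For the unparameterized version, the only subtlety is that ``relatively amenable'' means ``$\bar m$-relatively amenable'' where $\bar m$ is the enumeration of the ambient model, and different models $M, M'$ come with different enumerations $\bar m$ and $\bar m'$. But by Proposition \ref{remark: relative definable amenability}(2), relative amenability of $\aut(M)$ is equivalent to amenability of $T$, and likewise for $M'$; the point is that amenability of $T$ is defined so as to be independent of the choice of enumeration, via Corollary \ref{corollary: equivalent definitions of amenable theory} (condition (6), where $\bar m$ is an arbitrary enumeration of an arbitrary model). Combined with Proposition \ref{proposition: absoluteness of amenability of a type}, this gives the desired independence.

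There is no real obstacle here: the content is entirely packaged in Propositions \ref{proposition: absoluteness of amenability of a type} and \ref{remark: relative definable amenability} together with Corollary \ref{corollary: equivalent definitions of amenable theory}. The only thing worth being careful about is that when passing between $M$ and $M'$ in the $\bar\alpha$-version, one transports $\bar\alpha$ to a realization $\bar\alpha'$ of the same type in $M'$ using $\aleph_0$-saturation, and invokes strong $\aleph_0$-homogeneity to see that the resulting notion of relative amenability does not depend on which realization of $\tp(\bar\alpha/\emptyset)$ one picks inside $M'$.
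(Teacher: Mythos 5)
Your proof is correct and takes essentially the same route the paper intends (the corollary is stated without proof as an immediate consequence of Proposition \ref{remark: relative definable amenability}(2) together with Proposition \ref{proposition: absoluteness of amenability of a type}). The only minor remark is that the bracketed clause ``containing $\bar\alpha$'' means both models contain the same literal tuple $\bar\alpha$, so the transport via a realization of $\tp(\bar\alpha/\emptyset)$ you describe at the end is not strictly needed, though it is harmless and covers a slightly more general statement.
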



The next corollary of Proposition \ref{remark: relative definable amenability} will play an essential role Section \ref{Subsection 4.2}. To state it, we need to extend Definition \ref{definition: relatively definable sets} as follows.

\begin{dfn}\label{definition: relatively type-definable sets}
i) If $\bar \alpha$ is a tuple of elements of $\C$, by a {\em relatively $\bar \alpha$-type-definable} subset of $\aut(\C)$, we mean a subset of the form  $\{ \sigma \in \aut(\C) : \C \models \pi(\sigma(\bar a), \bar b))\}$ for some partial type $\pi(\bar x, \bar y)$ (without parameters), where $\bar x$ and $\bar y$ are short tuples of variables, and $\bar a$, $\bar b$ are tuples from $\C$ corresponding to $\bar x$ and $\bar y$, respectively, such that $\bar a$ is a subtuple of $\bar \alpha$.\\
ii) By a  {\em relatively type-definable} subset of $\aut(\C)$, we mean a relatively $\bar c$-type definable subset; equivalently,  a subset  of $\aut(\C)$ of the form $\{ \sigma \in \aut(\C) : \C \models \pi(\sigma(\bar a), \bar b))\}$ for some partial type $\pi(\bar x, \bar y)$ (without parameters), where $\bar x$ and $\bar y$ are short tuples of variables, and $\bar a$, $\bar b$ are corresponding tuples from $\C$.
\end{dfn}

We will be using the Boolean algebra generated by all relatively  $\bar \alpha$-type-definable subsets of $\aut(\C)$. Observe that this algebra consists of all sets of the form  $\{\sigma \in \aut(\C): \tp(\sigma(\bar a)/A)) \in {\mathcal P}\}$, where $A\subseteq \C$ is a (small) set, $\bar a$ is a short subtuple of $\bar \alpha$, and ${\mathcal P}$ is a finite Boolean combination of closed subsets of $S_{\bar a}(A)$.

\begin{cor}\label{corollary: measure extends to relatively type-definable sets}
Assume $\tilde \mu$ is an $\aut(\C)$-invariant, (regular) Borel probability measure on $S_{\bar \alpha}(\C)$. For a set $X:=  \{\sigma \in \aut(\C): \tp(\sigma(\bar a)/A)) \in {\mathcal P}\}$ (where $A\subseteq \C$ is a (small) set, $\bar a$ is a short subtuple of $\bar \alpha$, and ${\mathcal P}$ is a finite Boolean combination of closed subsets of $S_{\bar a}(A)$), put $\mu(X) := \tilde{\mu}(\pi^{-1}[{\mathcal P}])$, where $\pi \colon S_{ \bar \alpha}(\C) \to S_{\bar a}(A)$ is the restriction map. Then $\mu$ is a well-defined, $\aut(\C)$-invariant, finitely additive probability measure on the Boolean algebra generated by relatively $\bar \alpha$-type-definable subsets of $\aut(\C)$.

In particular, if $\aut(\C)$ is relatively amenable, then there exists an $\aut(\C)$-invariant, finitely additive probability measure on the Boolean algebra generated by relatively type-definable subsets of $\aut(\C)$.


\end{cor}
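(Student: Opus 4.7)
The plan is to verify three properties of the candidate measure $\mu$: well-definedness on the Boolean algebra $\mathcal{B}$ generated by the relatively $\bar\alpha$-type-definable subsets of $\aut(\C)$, finite additivity, and $\aut(\C)$-invariance. A preliminary observation is that by taking unions of the short subtuples (a finite union of short tuples is still short) and of the small parameter sets (a finite union of small sets is still small) appearing in a finite Boolean combination, every $X \in \mathcal{B}$ admits at least one presentation of the form $X = \{\sigma : \tp(\sigma(\bar a)/A) \in \mathcal{P}\}$, so the candidate assignment $\mu(X) := \tilde\mu(\pi^{-1}[\mathcal{P}])$ at least makes syntactic sense.

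The main step is well-definedness, and it hinges on the following image-equality lemma: for fixed $\bar a$ and $A$ as above, the map $\Phi \colon \aut(\C) \to S_{\bar a}(A)$, $\sigma \mapsto \tp(\sigma(\bar a)/A)$, and the restriction map $\pi \colon S_{\bar\alpha}(\C) \to S_{\bar a}(A)$ have the same image, namely $I := \{p \in S_{\bar a}(A) : \tp(\bar a/\emptyset) \subseteq p\}$. The inclusions $\mathrm{Im}(\Phi) \subseteq I$ and $\mathrm{Im}(\pi) \subseteq I$ are immediate; conversely, given $p \in I$, realize $p$ by some $\bar a_0 \in \C$ (using saturation), and then strong homogeneity produces $\tau \in \aut(\C)$ with $\tau(\bar a) = \bar a_0$, which simultaneously witnesses $p = \Phi(\tau)$ and $p = \pi(\tp(\tau(\bar\alpha)/\C))$. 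Granted this, if $X$ has two presentations $(\bar a_1, A_1, \mathcal{P}_1)$ and $(\bar a_2, A_2, \mathcal{P}_2)$, then passing to the union of the short tuples and of the parameter sets reduces us to comparing two finite Boolean combinations $\mathcal{P}_1', \mathcal{P}_2'$ in a common $S_{\bar a}(A)$; they define the same subset of $\aut(\C)$, hence agree on $\mathrm{Im}(\Phi)$, hence on $\mathrm{Im}(\pi)$ by the lemma, forcing $\pi^{-1}[\mathcal{P}_1'] = \pi^{-1}[\mathcal{P}_2']$ as Borel subsets of $S_{\bar\alpha}(\C)$, so $\tilde\mu$ assigns them a common value.

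Finite additivity now follows quickly: for disjoint $X_1, X_2 \in \mathcal{B}$ presented over a common $(\bar a, A)$ with $\mathcal{P}_1, \mathcal{P}_2$, disjointness means $\mathcal{P}_1 \cap \mathcal{P}_2 \cap \mathrm{Im}(\Phi) = \emptyset$, and the image lemma upgrades this to $\pi^{-1}[\mathcal{P}_1] \cap \pi^{-1}[\mathcal{P}_2] = \emptyset$, whence additivity of $\tilde\mu$ transfers. For $\aut(\C)$-invariance, a direct computation gives $\tau X = \{\sigma : \tp(\sigma(\bar a)/\tau(A)) \in \tau_* \mathcal{P}\}$, and unravelling the $\aut(\C)$-action on $S_{\bar\alpha}(\C)$ (translation of parameters) yields $\pi_{\tau(A)}^{-1}[\tau_* \mathcal{P}] = \tau \cdot \pi_A^{-1}[\mathcal{P}]$; combined with the $\aut(\C)$-invariance of $\tilde\mu$ this delivers $\mu(\tau X) = \mu(X)$. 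The ``In particular'' statement is the specialization $\bar\alpha = \bar c$, using Proposition \ref{remark: relative definable amenability}(2) to extract the hypothesized $\tilde\mu$ from relative amenability of $\aut(\C)$. The only real obstacle is the image-equality lemma; the rest is careful bookkeeping with restriction maps.
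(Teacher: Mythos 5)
The paper itself leaves this as an ``easy exercise,'' so there is no written argument to compare against; your proof supplies exactly the right details. The image-equality observation (the map $\sigma\mapsto\tp(\sigma(\bar a)/A)$ from $\aut(\C)$ and the restriction map $\pi$ from $S_{\bar\alpha}(\C)$ have the same range, by saturation and strong homogeneity) is the one substantive point, and the rest of your bookkeeping — passing to a common $(\bar a, A)$ for two presentations, transferring disjointness through $\pi$, and tracking the $\aut(\C)$-action via the commuting square $\pi_{\tau(A)}\circ\tau = \tau_*\circ\pi_A$ — is correct and complete.
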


\begin{proof}
Easy exercise.
\end{proof}

In the rest of this subsection, we give many examples (or even classes of examples) of amenable and extremely amenable theories.\\

Recall that a {\em $G$-flow} (for a topological group $G$) is a pair $(G,X)$, where $X$ is a compact, Hausdorff space on which $G$ acts continuously; a {\em $G$-ambit} is a $G$-flow $(G,X, x_0)$ with a distinguished point $x_0 \in  X$ with dense $G$-orbit (e.g. see p. 117-118 of \cite{Aus} for a discussion on universal ambits). The topological group $G$ is said to be {\em [extremely] amenable} if each $G$-flow (equivalently, the universal $G$-ambit) has an invariant, Borel probability measure [respectively, a fixed point]. (See \cite[Chapter III, Theorem 3.1]{Gla} for several equivalent definitions of amenability for topological groups.)

\begin{cor}\label{corollary: amenability implies relative definable amenability}
Let $M$ be $\aleph_0$-saturated and strongly $\aleph_0$-homogeneous. Then, if $\aut(M)$ is amenable as a topological group (with the pointwise convergence topology), then it is relatively amenable, which in turn implies that it is $\bar \alpha$-relatively amenable for any tuple $\bar \alpha$ of elements $M$.

Similarly, extreme amenability of $\aut(M)$ as a topological group implies extreme amenability of $T$.
\end{cor}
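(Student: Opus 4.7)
The plan is to reduce both statements to the single observation that $S_{\bar m}(M)$ is a natural $\aut(M)$-flow, where $\bar m$ enumerates $M$, and then invoke Proposition \ref{remark: relative definable amenability} and Corollary \ref{corollary: equivalent definitions of amenable theory}. First I would check that $\aut(M)$, equipped with the pointwise convergence topology, acts continuously on $S_{\bar m}(M)$ via $\sigma \cdot p := \{\varphi(\bar x,\sigma(\bar b)) : \varphi(\bar x,\bar b) \in p\}$. Continuity is routine: any basic clopen in $S_{\bar m}(M)$ is determined by a formula with only finitely many parameters from $M$, so automorphisms that agree pointwise on those (finitely many) coordinates act identically on a neighborhood of any given type. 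The same argument shows $S_{\bar\alpha}(M)$ is a continuous $\aut(M)$-flow for every tuple $\bar\alpha$ from $M$, and that the restriction map $\pi \colon S_{\bar m}(M) \to S_{\bar\alpha}(M)$ is continuous and $\aut(M)$-equivariant.

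Now suppose $\aut(M)$ is amenable as a topological group. Applied to the flow $S_{\bar m}(M)$, this gives an $\aut(M)$-invariant regular Borel probability measure on $S_{\bar m}(M)$. By Proposition \ref{remark: relative definable amenability}(2), this is exactly the statement that $\aut(M)$ is relatively amenable. For the second half, given any tuple $\bar\alpha$ of elements of $M$, we may assume $\bar\alpha$ is a subtuple of $\bar m$; then the pushforward $\pi_{*}\mu$ along the restriction $\pi \colon S_{\bar m}(M) \to S_{\bar\alpha}(M)$ is again an $\aut(M)$-invariant regular Borel probability measure, so by Proposition \ref{remark: relative definable amenability}(2) again, $\aut(M)$ is $\bar\alpha$-relatively amenable.

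For the extreme amenability clause, the same reduction works: if $\aut(M)$ is extremely amenable, the flow $S_{\bar m}(M)$ has a fixed point, i.e.\ there is an $\aut(M)$-invariant type in $S_{\bar m}(M)$, which is precisely extreme amenability of $\tp(\bar m/\emptyset)$ computed in $M$. By the absoluteness remark following the definition of extreme amenability (the analogue of Proposition \ref{proposition: absoluteness of amenability of a type}), this is equivalent to extreme amenability of $\tp(\bar m/\emptyset)$ as a type over $\emptyset$, and then the extreme-amenability analogue of the implication $(6) \to (4)$ in Corollary \ref{corollary: equivalent definitions of amenable theory} upgrades this to extreme amenability of $T$.

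There is no real obstacle here; the only point that requires any care is checking that the action of $\aut(M)$ on $S_{\bar m}(M)$ is continuous with respect to the pointwise convergence topology, which as noted above follows from the finiteness of the parameter set defining any basic clopen.
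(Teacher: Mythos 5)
Your proposal is correct and is essentially the same as the paper's argument: apply amenability of $\aut(M)$ to the flow $S_{\bar m}(M)$ to get an invariant measure, invoke Proposition \ref{remark: relative definable amenability}, and push forward along the restriction $S_{\bar m}(M)\to S_{\bar\alpha}(M)$; likewise replace the measure by a fixed point for the extreme case. The extra verification of continuity of the action is a sensible detail that the paper leaves implicit.
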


\begin{proof}
Amenability of $\aut(M)$ implies that there is an $\aut(M)$-invariant, Borel probability measure on $S_{\bar m}(M)$. By Proposition \ref{remark: relative definable amenability}, this implies relative amenability of $\aut(M)$. Furthermore, since there is an obvious flow homomorphism from $S_{\bar m}(M)$ to $S_{\bar \alpha}(M)$, a measure on $S_{\bar m}(M)$ induces a measure on $S_{\bar \alpha}(M)$, and this is enough by Proposition \ref{remark: relative definable amenability}.
\end{proof}

As in the introduction, we will call a countable $\aleph_{0}$-categorical theory KPT-[extremely] amenable if the automorphism group of its unique countable model is [resp. extremely] amenable as a topological group.


So, by  Corollary \ref{corollary: amenability implies relative definable amenability}, KPT-[extreme] amenability of a countable, $\aleph_{0}$-categorical theory $T$ implies [resp. extreme] amenability of $T$ in the new sense of this paper.  In fact most, if not all, of the examples of not only KPT-extremely amenable theories (such as dense linear orderings) but also KPT-amenable 
theories (such as the random graph, \cite[p. 2062]{AKL}) come from Fra\"{i}ss\'{e} classes with {\em canonical amalgamation}, hence are extremely amenable in our sense.  Only canonical amalgamation over $\emptyset$ is needed here (see Proposition \ref{proposition: canonical amalgamation} below) which says that there is a map $\otimes$ taking pairs of finite structures $(A,B)$ from the Fra\"{i}ss\'{e} class to an amalgam $A\otimes B$ (also in the Fra\"{i}ss\'{e} class) which is compatible with embeddings, i.e. if $f \colon B \to C$ is an embedding of finite structures from the Fra\"{i}ss\'{e} class, then there exists an embedding from $A \otimes B$ to $A \otimes C$ which commutes with $f$ and with the embeddings: $A \to A \otimes B$, $B \to A \otimes B$, $A \to A \otimes C$, and $C \to A \otimes C$.  A typical example is a Fra\"{i}ss\'{e} class with ``free amalgamation'', namely adding no new relations. 


\begin{prop}\label{proposition: canonical amalgamation}
If $M$ is an $\omega$-categorical structure which is the Fra\"{i}ss\'{e} limit of a Fra\"{i}ss\'{e} class of finite structures in a relational language [or, more generally, finitely generated structures in any language] with canonical amalgamation over $\emptyset$, then $\Th(M)$ is extremely amenable.
\end{prop}

\begin{proof}
For simplicity we deal with the case of finite relational structures. All the structures below are from the Fra\"{i}ss\'{e} class in question.

\medskip

\begin{clm}
For any finite tuples $\bar d, \bar a_1, \bar b_1,\dots, \bar a_n, \bar b_n$ from $M$, if the structures $\bar a_i$ and $\bar b_i$ are isomorphic (i.e. have the same quantifier-free type), then we can amalgamate structures $\bar d$ and $(\bar a_i, \bar b_i: i \leq n)$ into a structure $\bar d',\bar a_1',\bar b_1',\dots ,\bar a_n',\bar b_n'$ in such a way that $\bar a_i'$ is isomorphic with $\bar b_i'$ over $\bar d'$ for all $i \leq n$.
\end{clm}

\begin{clmproof}
Let $A$ be the substructure of $M$ consisting of the coordinates of the tuple $\bar d$, $B$ the substructure of $M$ consisting of the coordinates of the tuples $\bar a_1,\dots, \bar a_n$, and $C$ a substructure of $M$ containing the coordinates of the tuples $\bar a_1,\dots, \bar a_n,\bar b_1,\dots, \bar b_n$ and such that for every $i$ the isomorphism $\bar a_i \mapsto \bar b_i$ extends to an embedding $\sigma_i \colon B \to C$. Let $D:=C \otimes B$.

We have the following collection of canonical embeddings:

	\begin{figure}[H]
		\centering
		\begin{tikzcd}
& A \otimes D &\\
& A \otimes B  & D=C \otimes B\arrow[lu,hook,"f_2"]\\
A \arrow[ruu, hook, "\delta"]\arrow[ru,hook,"\beta"] & B\arrow[u,hook,"g_1"]\arrow[ru,hook,"h"] & C\arrow[u,hook,"f_1"] 
		\end{tikzcd}
	\end{figure}

Let $f:=f_2 \circ f_1 \colon C \to A \otimes D$ and $\tau_i:= f_1 \circ \sigma_i \colon B \to D$. Let $s \colon B \to D$ be the embedding given by $s(\bar a_i):=f_1(\bar a_i)$ for all $i$.

By canonical amalgamation, we have the following commutative diagrams of embeddings:

	\begin{figure}[H]
  \tikzset{column sep=small, ampersand replacement=\&}
\begin{floatrow}
		\centering
		\ffigbox{\begin{tikzcd}
\& A\arrow[ld,hook,"\beta"]\arrow[rd,hook,"\delta"] \&\\
A \otimes B \arrow[rr,hook, "\Phi_i"] \& \&A \otimes D\\
B\arrow[u,hook,"g_1"]\arrow[rr,hook,"\tau_i"] \& \&D\arrow[u,hook,"f_2"]\\
		\end{tikzcd}}{}
%
%
		\ffigbox{\begin{tikzcd}
\& A\arrow[ld,hook,"\beta"]\arrow[rd,hook,"\delta"] \&\\
A \otimes B \arrow[rr,hook, "\Psi"] \& \&A \otimes D\\
B\arrow[u,hook,"g_1"]\arrow[rr,hook,"s"] \& \&D\arrow[u,hook,"f_2"]\\
		\end{tikzcd}}{}
\end{floatrow}
	\end{figure}

We claim that $\bar a_i':=f(\bar a_i)$, $\bar b_i':=f(\bar b_i)$, and $\bar d':=\delta(\bar d)$ are as required. It is clear that $(\bar a_i, \bar b_i: i \leq n)$ is isomorphic to $(\bar a_i, \bar b_i: i \leq n)$, and $\bar d$ to $\bar d'$. The fact that $\bar a_i' \equiv^{qf}_{\bar d'} \bar b_i'$ can be seen via the following computation based on the above diagrams: $\Phi_i (\Psi^{-1}(\bar a_i')) = \Phi_i(\Psi^{-1}(f_2(f_1(\bar a_i))))=\Phi_i(\Psi^{-1}(f_2(s(\bar a_i))))= \Phi_i(g_1(\bar a_i))=f_2(\tau_i(\bar a_i))=f_2(f_1(\sigma_i(\bar a_i)))= f(\bar b_i)=\bar b_i'$, and  $\Phi_i (\Psi^{-1}(\bar d'))=\Phi_i(\Psi^{-1}(\delta(\bar d))) = \Phi_i(\beta(\bar d))=\delta(\bar d)=\bar d'$.
\end{clmproof}

By the claim, using $\omega$-categoricity and quantifier elimination, one concludes by compactness that any finitary type in $S(\emptyset)$ extends to an $\aut(M)$-invariant type in $S(M)$, so $T$ is extremely amenable (since $M$ is $\omega$-categorical). 
\end{proof}

In \cite{KrPi}, we proved that both KPT-amenability and KPT-extreme amenability are preserved by adding finitely many parameters.  This is not the case for our notion of first order [extreme] amenability as shown by the following two examples. Before that observe that if $T$ is [extreme] amenability, then so is $T^{eq}$. 

\begin{ex}\label{example: naming parameters}
Let $T$ be the theory of two equivalence relations $E_{1}, E_{2}$, where $E_{1}$ has infinitely many classes, all infinite, and each $E_{1}$-class is divided into two $E_{2}$-classes, both infinite. Then $T$ is extremely amenable, but adding an (imaginary) parameter for an $E_{1}$-class destroys extreme amenability.
\end{ex}

\begin{proof}
Using a standard back-and-forth argument, one checks that $T$ has quantifier elimination. To show extreme amenability of $T$, consider any type $\pi(\bar x)$ without parameters. It is clear (using quantifier elimination) that $\pi(\bar x)$ extends to $p \in S_{\bar x}(\C)$ so that $\neg E_1(x_i,c) \in p$ for all coordinates $x_i$ of $\bar x$ and all $c \in \C$. Then also $\neg E_2(x_i,c) \in p$ for all $x_i$ and $c$ as before. By quantifier elimination, $p$ is clearly $\aut(\C)$-invariant.

Now, add a constant for an $E_1$-class $C$. Consider the formula (over $\emptyset$) $\varphi(x) := (x \in C)$.  Let $p \in S_x(\C)$ be any global type containing $\varphi(x)$. Then $p$ determines one of the two $E_2$-classes into which $C$ is divided by $E_2$. But, by quantifier elimination, there is an automorphism of $\C$ which swaps these two classes, so moves $p$, and hence $p$ is not $\aut(\C)$-invariant.    
\end{proof}

\begin{ex}
Let $T$ be the theory of an equivalence relation $E$ with infinitely many infinite classes and a ternary relation $S(x,y,z)$ which is exactly the disjoint union of dense circular orders on all the $E$-classes. Then $T$ is extremely amenable, but adding an (imaginary) parameter for an $E$-class destroys even amenability.
\end{ex}

\begin{proof}
Again we have quantifier elimination, and extremely amenability can be seen as in the last example. Now, add a constant for an $E$-class $C$. Consider the formula (over $\emptyset$) $\varphi(x) := (x \in C)$. By quantifier elimination, the structure induced on $C$ is interdefinable over $\emptyset$ with a monster model $\C'$ of the theory of a dense circular order, and after dividing by the kernels $K$ and $K'$ of the relevant actions, the $\aut(\C)/K$-flow $S_{\varphi}(\C)$ is isomorphic to the $\aut(\C')/K'$-flow $S_1(\C')$. Since the latter flow does not carry an invariant, Borel probability measure (by Remark \ref{remark: amenability implies extension base} below, because the formula $x=x$ forks over $\emptyset$  in dense circular orders),  neither does the former one. 
\end{proof}

We take the opportunity and modify the above example to get a $G$-compact theory whose expansion by an imaginary parameter is not $G$-compact.

\begin{ex}\label{example: naming parameters kills G-compactness}
Let $T_0$ be any single-sorted non $G$-compact theory (e.g. from \cite[Proposition 4.5]{CLPZ})
in a language $L_0$ without constants.  
Let $T$ be the theory of an equivalence relation $E$ with infinitely many classes,
and each class has the $L_0$-structure of a model of $T_0$ (and for any $n$-ary function symbol  $f$ and a tuple $\bar a=(a_0,\dots,a_{n-1})$ containing elements from at least 2 different $E$-classes, we have $f(\bar a)=a_0$).
Then $T$ is $G$-compact (even $G$-trivial), but $G$-compactness is destroyed by adding an (imaginary) parameter for an $E$-class.
\end{ex}

\begin{proof}
By a back-and-forth argument, $T$ has quantifier elimination relative to the Morleyization of $T_0$.
Recall that  $\C\models T$ is a monster model. It is a disjoint union of monster models of $T_0$ on all $E$-classes.  Note that any union of infinitely many $E$-classes is a model of $T$, in fact, an elementary substructure of $\C$.  
Using this observation, one easily gets that any two tuples of bounded length with the same type over $\emptyset$ have the same type over a model, i.e. they are at Lascar distance at most 1. Thus, $T$ is $G$-trivial. On the other hand, take any tuple $\bar a$ contained in a single $E$-class $C$ and such that $[\bar a]_{E_L} \ne [\bar a]_{E_{KP}}$ in the sense of $T_0$ (working in $C \models T_0$). Then $[\bar a]_{E_L} \ne [\bar a]_{E_{KP}}$ in the sense of $T$ expanded by a constant for the class $C$, which follows from the obvious observation that $\aut^{T_0}(C)$ coincides with $\aut(\C/(\C \setminus C))|_C$ and the result from \cite{Ne} saying that $[\bar a]_{E_L} \ne  [\bar a]_{E_{KP}}$ if and only if $[\bar a]_{E_L}$ has infinite Lascar diameter.
\end{proof}

Let us look at Example \ref{example: naming parameters kills G-compactness} from the perspective of the main result of this paper.
By an argument as in Example \ref{example: naming parameters}, the theory $T$ from Example \ref{example: naming parameters kills G-compactness} is extremely amenable. Thus, $G$-triviality alternatively follows from Proposition \ref{proposition: extremely amenable local}. On the other hand, since after adding a constant the theory is not $G$-compact, it is also nonamenable by Theorem \ref{The main theorem}.

\begin{ex}
Let $M_n$ be the  unit circle equipped with the ternary relation $S_n$ of circular order and with the clockwise rotation $g_n$ of order $n$, and let $T_n:=\Th(M_n)$ (see \cite[Section 4]{CLPZ}); the language of $T_n$, consisting of $S_n$ and $g_n$, will be denoted by $L_n$. Let $T^-$ be the theory of an equivalence relation $E$ and pairwise disjoint unary predicates $P_n$, $n\in\omega$, where each $P_n$ is a union of infinitely many $E$-classes,
and each $E$-class on $P_n$ has the $L_n$-structure of a model of $T_n$ (and $g_n(a) =a$ for $a \notin P_n$).
Finally, let $T$ be the theory $T^-$ canonically expanded by the additional sort of all $E$-classes (but we do not add anything else from $T^{eq}$). Then $T$ is $G$-compact after naming any finitely many parameters, but is not $G$-compact after naming countably many elements $a_n/E$, $n<\omega$ (where $a_n \in P_n$ are chosen arbitrarily).
\end{ex}

\begin{proof}
By a back-and-forth argument, $T$ has quantifier elimination. For the monster model $\C$ of $T$ we have that each $E$-class on $P_n$ is a monster model of $T_n$. Again, any union of $E$-classes which contains infinitely many $E$-classes on each $P_n$ is an elementary substructure of $\C$.  
Using this observation, the fact that $T$ is $G$-compact after naming any finitely many parameters follows from the observation that each theory $T_n$ has this property (which we leave as an exercise).

Now, consider any tuple $\bar a= (a_n)_{n \in \omega}$ with $a_n \in P_n$. Add constants for all the classes $a_n/E$, $n \in \omega$, and denote the resulting expansion of $T$ by $T^c$. Then $[a_n]_{E_L}$ in the sense of $T_n$ (working in $[a_n]_E \models T_n$) coincides with $[a_n]_{E_L}$ in the sense of $T^c$, and the Lascar diameters of $[a_n]_{E_L}$ in the sense of both theories also agree. By \cite[Corollary 4.4]{CLPZ}, this diameter is greater than $n/2$. Thus, the diameter of $[\bar a]_{E_L}$ (in $T^c$) is infinite, so $T^c$ is not $G$-compact.
\end{proof}

The next fact follows easily from \cite[Proposition 2.11]{HrPi}.
\begin{fct}\label{fact: charact. of non-forking in NIP theories}
 In an NIP theory, for any global type $p$ the following conditions are equivalent:
\begin{enumerate} 
\item $p$ does not fork over $\emptyset$.
\item The $\aut(\C)$-orbit of $p$ is bounded.
\item $p$ is Kim-Pillay invariant (i.e. invariant under $\autf_{KP}(\C)$).
\item $p$ is Lascar invariant.
\end{enumerate}
\end{fct}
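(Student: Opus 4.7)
The plan is to argue around the cycle $(1) \Rightarrow (3) \Rightarrow (4) \Rightarrow (2) \Rightarrow (1)$, with the NIP hypothesis entering essentially only through the cited \cite[Proposition 2.11]{HrPi}. The two inner implications $(3) \Rightarrow (4)$ and $(4) \Rightarrow (2)$ are formal. Indeed, $(3) \Rightarrow (4)$ is immediate from the inclusion $\autf_L(\C) \subseteq \autf_{KP}(\C)$, which is built into the definitions (the relation $E_L$ refines $E_{KP}$). For $(4) \Rightarrow (2)$, if $p$ is $\autf_L(\C)$-invariant then the orbit map $\sigma \mapsto \sigma(p)$ factors through $\gal_L(T) = \aut(\C)/\autf_L(\C)$, whose cardinality is bounded, so the orbit of $p$ is bounded.

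For $(1) \Rightarrow (3)$, I would invoke \cite[Proposition 2.11]{HrPi} directly: in NIP, a global type does not fork over $\emptyset$ iff it is invariant over $\bdd(\emptyset)$. The translation to condition (3) is then a matter of definitions: $\autf_{KP}(\C)$ is exactly the group of automorphisms of $\C$ that fix every $E_{KP}$-class over $\emptyset$ (equivalently, that fix $\bdd(\emptyset)$ pointwise in the hyperimaginary sense), so $\bdd(\emptyset)$-invariance of $p$ coincides with $\autf_{KP}(\C)$-invariance.

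The step $(2) \Rightarrow (1)$ is where the NIP hypothesis is again essential. Here I would again appeal to \cite[Proposition 2.11]{HrPi}, the point being that bounded orbit of a global type forces $p$ to be invariant over some small model $M \prec \C$ (by a routine saturation/cardinality argument: choose $M$ large enough that any two $M$-conjugate short tuples lie in a common fiber of the orbit map, using boundedness of the orbit). In NIP, $M$-invariance upgrades to $\bdd(\emptyset)$-invariance via Proposition 2.11 (combined with the observation that $\bdd(\emptyset) \subseteq \bdd(M)$ plays correctly with bounded-orbit types), and hence to non-forking over $\emptyset$ by the same proposition in the reverse direction.

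The expected main obstacle is purely bookkeeping: matching the various equivalent formulations of ``non-forking over $\emptyset$'' in NIP — invariance over $\bdd(\emptyset)$, $\autf_{KP}(\C)$-invariance, boundedness of the $\aut(\C)$-orbit — to the literal statement of \cite[Proposition 2.11]{HrPi}. No new model-theoretic ingredient is required beyond that proposition, the inclusion $\autf_L(\C) \subseteq \autf_{KP}(\C)$, and the cardinality bound on $\gal_L(T)$.
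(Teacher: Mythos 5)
Your cycle $(1)\Rightarrow(3)\Rightarrow(4)\Rightarrow(2)\Rightarrow(1)$ is the right shape, and the first three arrows are fine: $(1)\Leftrightarrow(3)\Leftrightarrow(4)$ is exactly \cite[Prop.\ 2.11]{HrPi} applied with $A=\emptyset$ (non-forking over $\emptyset$ $\Leftrightarrow$ $\bdd(\emptyset)$-invariance $\Leftrightarrow$ Lascar-invariance over $\emptyset$), and $(4)\Rightarrow(2)$ is immediate since the orbit map factors through $\gal_L(T)$, which has bounded cardinality. Your preliminary observation in $(2)\Rightarrow(1)$ is also correct, though you leave the argument implicit: if the orbit of $p$ has bounded size $\lambda$, pick for each ordered pair $q_1\neq q_2$ in the orbit a separating instance $\varphi(\bar x,\bar c_{q_1,q_2})\in q_1\setminus q_2$; any small model $M$ containing all these boundedly many parameters makes $p$ $M$-invariant, since $\sigma\in\aut(\C/M)$ cannot move $p$ to a distinct orbit element.

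The genuine gap is the next step: ``In NIP, $M$-invariance upgrades to $\bdd(\emptyset)$-invariance via Proposition 2.11.'' This is false as stated. Applying \cite[Prop.\ 2.11]{HrPi} over the base $M$ only upgrades $M$-invariance to $\bdd(M)$-invariance and non-forking over $M$ --- not over $\emptyset$. Your parenthetical $\bdd(\emptyset)\subseteq\bdd(M)$ points in the wrong direction: invariance over the larger set $\bdd(M)$ is a \emph{weaker} condition than invariance over $\bdd(\emptyset)$, so it does not follow. Nothing in Proposition 2.11 lets you descend the base from $M$ to $\emptyset$, and in general non-forking over a model does not imply non-forking over $\emptyset$ (this is precisely the content of ``$\emptyset$ is an extension base'' failing, e.g.\ in a dense circular order). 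Two clean ways to repair $(2)\Rightarrow$: either give a direct, NIP-free argument that a bounded-orbit type does not fork over $\emptyset$ (if $\varphi(\bar x,\bar b)\in p$ divided over $\emptyset$ via an indiscernible $(\bar b_i)_{i<\kappa}$ with $k$-inconsistency, then the conjugates $\sigma_i(p)$ carrying $\varphi(\bar x,\bar b_i)$ would give $\geq\kappa$ distinct orbit elements since at most $k-1$ of them can coincide); or observe that the stabilizer of $p$ is a closed, bounded-index subgroup of $\aut(\C)$ and apply Fact \ref{fact: boundedness of the index implies AutfL(C) is contained} (i.e.\ \cite[Prop.\ 5.2]{KrNeSi}) to get $\autf_L(\C)\subseteq\Stab(p)$, which is $(2)\Rightarrow(4)$ directly, again without NIP.
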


More importantly, Proposition 4.7 of \cite{HrPi} can be stated as:
\begin{fct}\label{fact: amenable = non forking}
In an NIP theory, a type $p \in S(\emptyset)$ is amenable if and only if it does not fork over $\emptyset$ (equivalently, it has a global non-forking extension).
\end{fct}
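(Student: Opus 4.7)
\emph{Proof proposal.} The statement is essentially a reformulation of \cite[Proposition~4.7]{HrPi}, so I would present it as such, but it is worth sketching how each direction goes so the reader can see how Fact \ref{fact: charact. of non-forking in NIP theories} is being used.

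For the $(\Leftarrow)$ direction, suppose $p\in S_{\bar x}(\emptyset)$ has a global non-forking extension $q\in S_{\bar x}(\C)$. By Fact \ref{fact: charact. of non-forking in NIP theories}, $q$ is Kim-Pillay invariant, so its $\aut(\C)$-orbit is bounded and the map $\Phi\colon\gal_{KP}(T)\to S_p(\C)$, $\sigma\autf_{KP}(\C)\mapsto \sigma(q)$, is well defined. Let $\mathfrak h$ denote the normalized Haar measure on the compact Hausdorff group $\gal_{KP}(T)$ (Definition \ref{definition: Gal_KP}). Define $\mu$ on clopens of $S_{\bar x}(\C)$ by $\mu([\varphi(\bar x,\bar b)]):=\mathfrak h(\{\sigma\autf_{KP}(\C):\varphi(\bar x,\bar b)\in\sigma(q)\})$; one checks, using that $q$ is $\autf_{KP}(\C)$-invariant, that the set on the right is Borel (in fact a finite Boolean combination of closed sets, exactly as in the Proposition 4.7 argument), that $\mu$ is finitely additive and $\aut(\C)$-invariant, and that $\mu$ is concentrated on $S_p(\C)$. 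Regularity is automatic on clopens, so $\mu$ extends to the required Borel measure witnessing amenability of $p$.

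For the $(\Rightarrow)$ direction, suppose $\mu$ witnesses amenability of $p$; I would argue that $p$ does not fork over $\emptyset$ via the standard Keisler-measure dividing lemma. Given any formula $\varphi(\bar x,\bar b)$ dividing over $\emptyset$, witnessed by an $\emptyset$-indiscernible sequence $\langle \bar b_i\rangle_{i<\omega}$ with $\bar b_0=\bar b$ and $\{\varphi(\bar x,\bar b_i)\}_{i<\omega}$ being $k$-inconsistent, the $\aut(\C)$-invariance of $\mu$ combined with $\bar b_i\equiv\bar b_0$ gives $\mu(\varphi(\bar x,\bar b_i))=\mu(\varphi(\bar x,\bar b_0))=:r$ for all $i$. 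Since every global type contains at most $k-1$ of the $\varphi(\bar x,\bar b_i)$, integrating the indicator inequality $\sum_{i<N}\mathbf{1}_{[\varphi(\bar x,\bar b_i)]}\le k-1$ against $\mu$ yields $Nr\le k-1$ for every $N$, hence $r=0$. Thus every dividing formula has $\mu$-measure zero; since in NIP dividing over $\emptyset$ equals forking over $\emptyset$, every forking formula has $\mu$-measure zero, and by a straightforward compactness/Zorn argument on the lattice of closed subsets of $S_p(\C)$ of full $\mu$-measure, there is $q\in S_p(\C)$ all of whose formulas have positive $\mu$-measure; this $q$ is then a non-forking global extension of $p$.

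The only slightly delicate point is the $(\Leftarrow)$ direction: namely, verifying that the Haar-push-forward formula really defines a finitely additive, $\aut(\C)$-invariant measure on clopens, and that ``$\varphi\in\sigma(q)$'' depends only on the coset $\sigma\autf_{KP}(\C)$ in a Borel-measurable way. This is where NIP enters through Fact \ref{fact: charact. of non-forking in NIP theories} (Lascar = KP invariance of non-forking global types), and also through the description of $\aut(\C)$-orbits of invariant types as finite Boolean combinations of type-definable sets, which is exactly \cite[Proposition~2.6]{HrPi} used in the proof of \cite[Proposition~4.7]{HrPi}. For the write-up I would simply cite \cite[Proposition~4.7]{HrPi} rather than redo these calculations.
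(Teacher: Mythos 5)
Your proposal follows the paper's argument closely on both directions: for $(\Leftarrow)$ you push forward Haar measure on $\gal_{KP}(T)$ exactly as in the paper's sketch, invoking Fact \ref{fact: charact. of non-forking in NIP theories} and \cite[Proposition~2.6]{HrPi} in the same places, and for $(\Rightarrow)$ you use the standard dividing-measure lemma plus a wide type, which is the content of Remark \ref{remark: amenability implies extension base}. One intermediate claim is wrong, however: ``in NIP dividing over $\emptyset$ equals forking over $\emptyset$'' is not a theorem — that equivalence is known over extension bases, and $\emptyset$ being an extension base is precisely what is at stake here. Indeed, the paper's own counterexample to the converse of Theorem \ref{The main theorem}, the dense circular order, is NIP, and there $x=x$ forks over $\emptyset$ but of course cannot divide. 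Fortunately the claim is unnecessary: since a formula forks over $\emptyset$ iff it implies a finite disjunction of dividing formulas, sub-additivity of $\mu$ already gives that forking formulas have measure zero; equivalently, a \emph{complete} global type $q$ none of whose formulas divides cannot fork, because otherwise some disjunct of the witnessing disjunction would lie in $q$ and divide. Rephrasing that one sentence, this direction works in an arbitrary theory (not just NIP), as the paper notes in Remark \ref{remark: amenability implies extension base}.
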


Although Fact \ref{fact: amenable = non forking} is proved in \cite{HrPi}, let us give a sketch of the proof of ($\leftarrow$) in order to see how the desired measure is obtained. 
So assume that $p(\bar x)$ does not fork over $\emptyset$, and take its global non-forking extension $q(\bar x) \in S_{\bar x}(\C)$. Take $\bar \alpha \models p$.  Consider any formula $\varphi(\bar x, \bar b)$. Recall that
$$A_{\varphi,\bar \alpha, \bar b} =\{ \sigma \in \aut(\C): \C \models \varphi(\sigma(\bar \alpha),\bar b)\}.$$ 
Let $S_\varphi:=\{ \bar b': \varphi(\bar x, \bar b') \in q\}$.
By Fact \ref{fact: charact. of non-forking in NIP theories}, $q$ is $\autf_{KP}(\C)$-invariant. So, by the argument in Proposition 2.6 of \cite{HrPi} and NIP, there is $N<\omega$ such that
$$S_\varphi=\bigcup_{n<N} A_n \cap B_{n}^c,$$
where each $A_n$ and $B_n$ is type-definable and invariant under $\autf_{KP}(\C)$.
Let 
$$\tilde{S}_{\varphi(\bar x,\bar b)}:= \{ \sigma/\autf_{KP}(\C) : \varphi(\bar x, \sigma^{-1}(\bar b)) \in q\} =  \{ \sigma/\autf_{KP}(\C) : \varphi(\bar x, \bar b) \in \sigma (q)\}.$$
Using the above formula for $S_\varphi$, one shows that  $\tilde{S}_{\varphi(\bar x,\bar b)}$ is a Borel (even constructible) subset of $\gal_{KP}(T)$. 

Let $\frak{h}$ be the unique (left invariant) normalized Haar measure on the compact group $\gal_{KP}(T)$. By the last paragraph, $\tilde{S}_{\varphi(\bar x,\bar b)}$ is Borel, hence $\frak{h}(\tilde{S}_{\varphi(\bar x,\bar b)})$ is defined, and so we can put 
$$\mu (A_{\varphi,\bar \alpha, \bar b}) := \frak{h}(\tilde{S}_{\varphi(\bar x, \bar b)}).$$
It is easy to check that $\mu$ is a well-defined (i.e. does not depend on the choice of $\varphi$ yielding the fixed set $A=A_{\varphi,\bar \alpha, \bar b}$), $\aut(\C)$-invariant, finitely additive probability measure on relatively $\bar \alpha$-definable subsets of $\aut(\C)$. Thus, $\aut(\C)$ is $\bar \alpha$-relatively amenable; equivalently, $p$ is amenable.



By Fact \ref{fact: amenable = non forking} and \cite[Corollary 2.14]{HrPi}, we have

\begin{cor}\label{corollary:NIP characterization}
Assume $T$ has NIP. Then, $T$ is amenable if and only if  $\emptyset$ is an extension base (i.e. any type over $\emptyset$ does not fork over $\emptyset$). In particular, stable, o-minimal, and c-minimal theories are all amenable (even after adding constants). 
\end{cor}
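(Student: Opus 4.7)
The plan is straightforward given the machinery already assembled. The corollary is essentially the combination of three ingredients: (i) amenability of $T$ reduces to amenability of all finitary complete types over $\emptyset$, (ii) for NIP theories, amenability of a finitary type is equivalent to non-forking over $\emptyset$, and (iii) the extension base property for the listed tame classes is already in the literature. I would therefore just thread these together.

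First, I would apply Corollary \ref{corollary: equivalent definitions of amenable theory} to reduce the left hand side: $T$ is amenable iff every finitary complete type $p \in S(\emptyset)$ is amenable. Under the NIP hypothesis, Fact \ref{fact: amenable = non forking} then translates this pointwise into: every $p \in S(\emptyset)$ (finitary) has a global non-forking extension, that is, does not fork over $\emptyset$. But the statement ``every complete type over $\emptyset$ does not fork over $\emptyset$'' is precisely the definition of $\emptyset$ being an extension base. This gives the biconditional in both directions with no further work.

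For the ``in particular'' clause, I would note that stable, o-minimal, and c-minimal theories are all NIP, so the first part of the corollary applies. The extension base property for these classes, including after naming constants (so that every small set is an extension base), is exactly \cite[Corollary 2.14]{HrPi}. Combining this with the biconditional just established yields amenability of any such theory, and the ``even after adding constants'' assertion follows from the same reference applied to expansions by constants (which remain stable, o-minimal, or c-minimal respectively, and in particular remain NIP).

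Since every step is a direct appeal to a named result, there is no real obstacle here; the work was in proving Fact \ref{fact: amenable = non forking} and the equivalent conditions of Corollary \ref{corollary: equivalent definitions of amenable theory}. The only mild care needed is to observe that the reduction to finitary types in (i) and the NIP equivalence in (ii) both apply uniformly to all complete types over $\emptyset$, so that ``all finitary types are amenable'' and ``$\emptyset$ is an extension base'' genuinely match up on the nose.
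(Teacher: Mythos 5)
Your proof is correct and matches the paper's approach exactly: the paper simply cites Fact \ref{fact: amenable = non forking} together with \cite[Corollary 2.14]{HrPi} (with the reduction to finitary types via Corollary \ref{corollary: equivalent definitions of amenable theory} left implicit). You have merely spelled out the same chain of citations in more detail.
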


In the above proof of the implication ($\leftarrow$) in Fact \ref{fact: amenable = non forking}, NIP plays an essential role to get that $q$ is $\autf_{KP}(\C)$-invariant and that $\tilde{S}_\varphi$ is Borel. On the other hand, the implication ($\rightarrow$) in Fact \ref{fact: amenable = non forking} is completely general. Namely, we have

\begin{rem}\label{remark: amenability implies extension base}
In an arbitrary theory, if a partial type $\pi(\bar x)$ over $\emptyset$ is amenable, then it does not fork over $\emptyset$.
In particular, in an arbitrary amenable theory, $\emptyset$ is an extension base. 
\end{rem}
\begin{proof}
Let $\mu$ be a global, invariant Keisler measure extending $\pi(\bar x)$. Choose a $\mu$-wide type $q(\bar x) \in S_{\bar x}(\C)$ (i.e. any formula in $q(\bar x)$ is of positive measure). Then, one easily checks that $q(\bar x)$ does not fork over $\emptyset$, so we are done. 
\end{proof}

Thus, amenability of $T$ is a strong form of saying that $\emptyset$ is an extension base; and amenability after adding any constants is a strong form of saying that every set is an extension base.


%
 
By \cite[Corollary 2.10]{HrPi}, the characterization from Corollary \ref{corollary:NIP characterization} gives us

\begin{cor}
Assume $T$ has NIP. Then amenability of $T$ implies $G$-compactness. 
\end{cor}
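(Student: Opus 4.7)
By Corollary \ref{corollary:NIP characterization}, under NIP the amenability of $T$ is equivalent to $\emptyset$ being an extension base, i.e.\ every complete type over $\emptyset$ has a global non-forking extension. The plan is therefore to reduce to this extension-base condition and then invoke the cited \cite[Corollary 2.10]{HrPi}, which says exactly that in an NIP theory where $\emptyset$ is an extension base, $T$ is G-compact.

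To understand why the reduction works directly, fix a short tuple $\bar\alpha$ and set $p=\tp(\bar\alpha/\emptyset)$. Pick a global non-forking extension $q$ of $p$; by Fact \ref{fact: charact. of non-forking in NIP theories} (this is where NIP is crucial), $q$ is simultaneously $\autf_L(\C)$- and $\autf_{KP}(\C)$-invariant. Given $\bar\beta$ with $\bar\beta\,E_{KP}\,\bar\alpha$, pick $\sigma\in\autf_{KP}(\C)$ sending $\bar\alpha$ to $\bar\beta$; then $\sigma(q)=q$. The strategy is to build a Morley sequence of $q$ over a small model $M$ containing $\bar\alpha\bar\beta$ and to use its Lascar-indiscernibility, which is forced by Lascar invariance of $q$ under NIP, to produce a chain $\bar\alpha=\bar\alpha_0,\bar\alpha_1,\dots,\bar\alpha_n=\bar\beta$ of bounded length whose consecutive terms share a type over a small model, thereby bounding $d_L(\bar\alpha,\bar\beta)$. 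By the result of Newelski recalled in Section \ref{section: preliminaries on G-compactness}, finite Lascar diameter yields $[\bar\alpha]_{E_L}=[\bar\alpha]_{E_{KP}}$, which, applied uniformly in $\bar\alpha$, is G-compactness.

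The main obstacle is the Morley-sequence step: one must verify not only that Lascar invariance of $q$ propagates to Lascar-indiscernibility of its Morley sequence, but also that this sequence, in tandem with the $\autf_{KP}$-witness $\sigma$ and the identity $\sigma(q)=q$, actually witnesses a bounded-length $E_L$-chain between $\bar\alpha$ and $\bar\beta$ rather than only between two generic realizations of $q$ over $M$. This is the genuine content of \cite[Corollary 2.10]{HrPi}; since the technology is developed there, I would simply cite it rather than reproduce it.
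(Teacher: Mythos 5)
Your proof is correct and follows essentially the same route as the paper: both reduce amenability under NIP to $\emptyset$ being an extension base via Corollary \ref{corollary:NIP characterization}, and then invoke \cite[Corollary 2.10]{HrPi} to obtain G-compactness. The additional sketch of the Morley-sequence argument underlying the cited result is a reasonable gloss on the content of \cite[Corollary 2.10]{HrPi}, but it is not needed since you (and the paper) ultimately cite that result rather than reprove it.
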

%
Theorem \ref{The main theorem} is a generalization of the last corollary to arbitrary amenable theories, but it requires completely different methods compared with the NIP case.

It is worth mentioning that Theorem 7.7 of \cite{KrNeSi} yields several other conditions equivalent (under NIP) to the existence of $p \in S_{\bar c}(\C)$ with bounded $\aut(\C)$-orbit (and so to amenability of $T$), for example: some (equivalently, every) minimal left ideal of the Ellis semigroup of the $\aut(\C)$-flow $S_{\bar c}(\C)$ is of bounded size. 
In particular, a variant of Newelski's conjecture proved in \cite[Theorem 0.7]{KrNeSi} can be stated as follows: if $T$ is an amenable theory with NIP, then a certain natural epimorphism from the Ellis group of $T$ to $\gal_{KP}(T)$ is an isomorphism. This also implies $G$-compactness of amenable, NIP theories.

Let us mention in this section some relations between our notions of amenability and extreme amenability of a theory $T$ and the notion of a strongly determined over $\emptyset$ theory from \cite{Ivanov} (originating in work of Ivanov and Macpherson \cite{I-M}).  Decoding the definition in \cite{Ivanov}, $T$ is strongly determined over 
$\emptyset$ if any complete type $p(\bar x)$ over $\emptyset$ has an extension to a complete type $p'(\bar x)$ over $\C$ which is $\acl^{eq}(\emptyset)$-invariant.  So clearly $T$ extremely amenable implies $T$ is strongly determined over 
$\emptyset$.   Moreover, by Corollary \ref{corollary:NIP characterization}, assuming NIP, $T$ strongly determined over 
$\emptyset$ implies amenability of $T$.  In fact, if $T$ is NIP and KP-strong types agree with usual strong types (over $\emptyset$), then $T$ is strongly determined over $\emptyset$ iff $T$ is amenable. \\

We finish this section with a list of some  examples of classes of [extremely] amenable theories.

\begin{itemize}
\item By Corollary \ref{corollary: amenability implies relative definable amenability}, every countable, $\omega$-categorical structure with [extremely] amenable group of automorphisms has [extremely] amenable theory. Many concrete examples of  such structures were found in \cite{KPT} and in later papers which further studied ``KPT theory''. For example, the theory of any ordered random hypergraph is extremely amenable.
\item 
By Proposition \ref{proposition: canonical amalgamation}, whenever a  Fra\"{i}ss\'{e} class with canonical amalgamation over $\emptyset$ has $\omega$-categorical Fra\"{i}ss\'{e} limit, then the theory of this limit is extremely amenable. For example, the theory of any random hypergraph is extremely amenable.
\item By Corollary \ref{corollary:NIP characterization}, all NIP theories for which $\emptyset$ is an extension base are amenable; in particular, all stable, o-minimal, and C-minimal theories are amenable. But o-minimal theories are even extremely amenable, because any global non-forking extension of a given type over $\emptyset$ is $\autf_L(\C)$-invariant by Fact \ref{fact: charact. of non-forking in NIP theories}, and, on the other hand, in o-minimal theories, $\autf_L(\C)= \aut(\C)$ by \cite[Lemma 24]{Zi}. A stable theory is extremely amenable if and only of all [finitary] complete types over $\emptyset$ are stationary.
\item The theories of all measurable structures in the sense of Elwes and  Macpherson (e.g. pseudo-finite fields, 
smoothly approximable structures) are amenable by \cite[Remark 3.8(5)]{ElMa} and Corollary \ref{corollary: equivalent definitions of amenable theory}(4). Those theories are supersimple of finite D-rank by Corollaries 3.6 and 3.7 of \cite{ElMa}.

\item Let $\mathcal{C}$ be a class of finite structures of size at least $2$ in a language $L$ containing constants $c_1,c_2$ interpreted as distinct elements in all structures in $\mathcal{C}$.  Then the structures $A \in \mathcal{C}$ can be canonically expanded to $L'$-structures $A'$ with $\aut(A')=\aut(A)$, such that letting $\mathcal{C}':=\{A': A \in \mathcal{C}\}$, $T':=\Th(\mathcal{C}')$ is amenable in the sense that all its completions are amenable; and remains so over any finite set.  (It suffices to close the language of $\mathcal{C}$ under cardinality comparison quantifiers $Q\bar x \bar y(\phi(\bar x,\bar u),\psi(\bar y, \bar v))$,  asserting in finite models that there are at least as many tuples  $\bar x$ with $\phi(\bar x,\bar u)$ as  tuples $\bar y$ with $\psi(\bar y,\bar v)$, where $\bar x,\bar u,\bar y,\bar v$ are finite tuples of variables;  see the first page of \cite[Section 8.3]{ch-h}.  (Though the general quantifiers referred to here are actually only briefly mentioned before passing to a more specialized and more effective version appropriate there.)
It is easy to see (using constants $c_1,c_2$) that using these quantifiers, one can also express the relation $p |D(\bar a)| \geq q |D'(\bar b)|$ as $R_{p,q,D,D'}(\bar a,\bar b)$ for some $L'$-formula $R_{p,q,D,D'}(\bar x,\bar y)$ (without extra parameters),  where 
$p,q$ are positive integers and $D$, $D'$ are definable families of definable sets.
Hence, for any $\bar a$-definable set $D=D(\bar a)$ in a monster model $\C$ of any completion of $T'$, we can define an $\aut(\C/\bar a)$-invariant Keisler measure on definable subsets of $D$ by: $\mu(E(\bar b)):=\inf \{p/q : \C \models R_{p,q,D,E}(\bar a,\bar b)\}$. Thus,  the theory $T'$ is amenable. In fact, it is even definably amenable in the sense of Definition \ref{definition: definably amenable theory} below.)
 

\item 
In \cite{KLM} (which was written about a year after the original preprint containing the material from this paper), Ramsey-theoretic characterizations of [extreme] amenability as well as various other dynamical properties of first order theories are established. Also, examples of amenable theories illustrating some other important phenomena (which we will not mention here) are given there. Let us only say that e.g. Examples 5.10 and 5.11 from that paper yield some amenable theories which are not extremely amenable, not NIP, and supersimple of SU-rank 1. Expanding them by an ``independent'' dense linear order, we get examples of amenable theories which are not extremely amenable, not NIP, and not simple. Namely, one can easily show the following. 

Let $T$ be the theory in a relational language $\{R,E_n,\leq\}_{n<\omega}$ (where all symbols are binary) saying that $R$ is irreflexive and symmetric, each $E_n$ is an equivalence relation with at least two classes and $E_0$ has finitely many classes,
$\leq$ is a dense linear order, and the relations $R$, $\{E_n\}_{n<\omega}$, $\leq$ are ``independent'' in the sense that 
the intersection of 
\begin{enumerate}
\item any finite collection of formulas of the form $\pm R(x,a)$ (with pairwise distinct $a$'s) with 
\item any finite collection $C_0,\dots,C_{n-1}$ of classes of the relations $E_0,\dots,E_{n-1}$, respectively, with
\item any open $\leq$-interval 
\end{enumerate}
is non-empty.
Then $T$ is complete with quantifier elimination, amenable (as in the proof of amenability in \cite[Example 5.10]{KLM}, and see also the next bullet), but not extremely amenable (because each global 1-type determines an $E_0$-class which can be moved to another $E_0$-class by some automorphism), not NIP (because the reduct to $R$ is the random graph), and not simple (because of the dense linear order $\leq$). 
\item One can extend the last example as follows. Let us start from any amenable theory $T_0$ for which $\acl(A)=A$ for every $A$ (in place of the theory of independent equivalence relations $\{E_n\}_n$). Then add independently a random graph and a dense linear order to obtain a new theory $T$ (by ``independently'' we mean that the intersection of any infinite definable set with any collection of finitely many formulas of the form $\pm R(x,a)$ (with pairwise distinct $a$'s) and with any open interval is non-empty). Then $T$ is amenable, non-NIP, and non-simple.

The idea of the proof is as follows. We may assume that $T_0$ has q.e (by taking Morleyzation), and then so does $T$ (by a back-and-forth argument, using  randomness and the assumption that $\acl(A)=A$). Take any finitary type $p(\bar x) \in S_{\bar x}^{T}(\emptyset)$ (in the theory $T$), where $\bar x= (x_i)_{i<m}$; let $p_0 \in S_{\bar x}^{T_0}(\emptyset)$ be its restriction in the theory $T_0$.  Let $\C$ be a monster model of both $T$ and $T_0$. Consider the partial type $\pi (\bar x) := \{ x_i \geq c: i<m, c\in \C\} \cup \{R(x_i,a): i<m, a\in \C\}$ in $T$, and let $X:=[\pi(\bar x)]$ be the induced closed subset of $S^T_p(\C)$. By randomness and the extra assumption on $\acl$, $X$ is non-empty, so it is an $\aut^{T}(\C)$-flow. Let $\Phi \colon S_p^T(\C) \to S_{p_0}^{T_0}(\C)$ be the restriction map. By q.e., randomness, and the assumption on $\acl$, one easily checks that $\Phi |_{X} \colon X \to S_{p_0}^{T_0}(\C)$ is a monomorphism of $\aut^T(\C)$-flows whose image is the $\aut^{T_0}(\C)$-subflow $Y$ of $S_{p_0}^{T_0}(\C)$ given by $\{ x_i \ne c: i<m, c\in \C\}$. Since $T_0$ is amenable, $Y$ carries an $\aut^{T_0}(\C)$-invariant, Borel probability measure. The pullback of this measure under $\Phi |_{X}$ is an $\aut^T(\C)$-invariant, Borel probability measure on $X$, which witnesses that $p(\bar x)$ is amenable. Thus, $T$ is amenable.
\end{itemize}

\section{Amenability implies $G$-compactness:  the case of definable measures}\label{Subsection 4.1 and a half}

Theorem \ref{The main theorem} will be proved  in full generality in Section \ref{Subsection 4.2}.  However, some special cases have a relatively easy proof. 
One such  is the NIP case above. Another case is when  $T$ is extremely amenable, where the proof of Remark 4.21 of \cite{KrPi} shows that in fact $T$ is $G$-trivial (the Lascar group is trivial).  This is made explicit in Proposition \ref{proposition: extremely amenable local} below.  Ivanov's observation in \cite{Ivanov} that if $T$ is strongly determined over $\emptyset$, then Lascar strong types coincide with (Shelah) strong types follows from Proposition \ref{proposition: extremely amenable local} by working over $\acl^{eq}(\emptyset)$.  However, deducing $G$-compactness of $T$ from  amenability of $T$ in general  is more complicated, and the proof in Section \ref{Subsection 4.2} uses 
a version of the stabilizer theorem (i.e. Corollary 2.12 of \cite{HKP1}) and requires adaptations of some ideas from Section 2 of \cite{HKP1} involving various computations concerning relatively definable subsets of $\aut(\C)$.
This section is devoted to a proof of the main result in the special case when amenability of $T$ is witnessed by $\emptyset$-definable, global Keisler measures, rather than just $\emptyset$-invariant Keisler measures (see Definition \ref{definition: definably amenable theory} below). 
We will make use of continuous logic stability as in Section 3 of \cite{HKP1}. 
We want to clarify that we are not trying to give here an introduction to
continuous logic in the sense of the precise formalism of \cite{BY-U}. In \cite{HKP1}, we
gave a self contained account of a certain approach to continuous logic in the
context of classical first order theories.  Here, we discuss, among other
things, compatibilities of our approach in \cite{HKP1} with the specific formalism
of \cite{BY-U}, as we want to make explicit use of results from \cite{BY-U}.

Recall the standard notion of a definable function from a model to a compact, Hausdorff space (the equivalent statements given below follow from \cite[Lemma 3.2]{GPP}). A function $f \colon M^n \to C$ (where $M$ is a model and $C$ is a compact, Hausdorff space) is called {\em definable} if the preimages under $f$ of any two disjoint closed subsets of $C$ can be separated by a definable subset of $M^n$; equivalently, $f$ is induced by a (unique) continuous map from $S_n(M)$ to $C$. This is equivalent to the condition that $f$ has a (unique) extension to an {\em $M$-definable} function $\hat{f}\colon \C^n \to C$ (where $\C$ is a monster model), meaning that the preimages under $\hat{f}$ of all closed subsets of $C$ are type-definable over $M$. A function from  $\C^n$ to $C$ is said to be $A$-definable, if the preimages of all closed subsets are type-definable over $A$. In particular, a Keisler measure $\mu(\bar x)$ is said to be {\em $\emptyset$-definable} if for every formula $\varphi(\bar x, \bar y)$, the function $\mu(\varphi(\bar x, \bar y)) : \C^{\bar y} \to [0,1]$ is $\emptyset$-definable.

\begin{dfn}\label{definition: definably amenable theory}
A theory $T$ is {\em definably amenable} if every formula $\varphi(\bar x)$ extends to a $\emptyset$-definable, global Keisler measure.
\end{dfn}

We are aware that, while there is no formal clash, this is a different use
of the adverb than in the case of definable groups;  ``definably'' there
refers to the measure algebra, not to the measure .  Thus ``definably definably amenable'' would express, in
addition, that the measure $\mu$ on $\Def(G)$ is itself definable.   In the case
of theories, the inner qualifier is redundant.

We first discuss the relationship between our formalism from Section 3 of \cite{HKP1} and that of \cite{BY-U}.  Start with our (classical) complete first order theory $T$, which we assume for convenience to be $1$-sorted.  This is  a theory in continuous logic in the sense of \cite{BY-U}, but where the metric is discrete and all relation symbols are $\{0,1\}$ valued, where $0$ is treated as ``true'' and $1$ as ``false''.  The type spaces $S_{n}(T)$ are of course Stone spaces. 
Recall from Definition 3.4 of \cite{HKP1} that by a {\em continuous logic (CL) formula over $A$} we mean a continuous function $\phi \colon S_n(A)\to \R$.  If $\phi$ is such a CL-formula, then for any $\bar b\in M^n$ (where $M \models T$) by $\phi(\bar b)$ we mean $\phi(\tp(\bar b/A))$. So CL-formulas over $A$ can be thought of as $A$-definable maps from $\C^n$ to compact subsets of $\R$ (note that the range of every CL-formula is compact).
 What are called {\em definable predicates}, in finitely many variables and without parameters,  in \cite{BY-U} are precisely CL-formulas over $\emptyset$ in our sense, but where the range is contained in $[0,1]$.  Namely, a definable predicate in $n$ variables is given by a continuous function from $S_{n}(T)$ to $[0,1]$.  The CL-generalization of Morleyizing $T$ consists of adding all such definable predicates as new predicate symbols in the sense of continuous logic.  So if $M$ is a model of $T$ and $\phi({\bar x})$ is such a new predicate symbol, then the interpretation $\phi(M)$ of $\phi$ in $M$ is the function taking an $n$-tuple $\bar a$ from $M$ to $\phi(\tp({\bar a}))$.  
Let us call this new theory $T_{CL}$ (a theory of continuous logic), to which we can apply the results of \cite{BY-U}. 
By the discussion after Proposition 3.10 in \cite{BY-U}, one sees that $T_{CL}$ has quantifier elimination \cite[Definition 4.14]{BY-U}. 
As just remarked, any model $M$ of $T$ expands uniquely to a model of $T_{CL}$, but we will still call it $M$. Note also that any saturation or homogeneity property of $M$ is preserved under passing to $T_{CL}$ (which follows from the observations that the group of automorphisms of $M$ is preserved and types in $S^T(A)$ determine types in $S^{T_{CL}}(A)$ for any $A$).

To understand {\em imaginaries} as in Section 5 of \cite{BY-U},  we have to also consider definable predicates, without parameters, but in possibly infinitely (yet countably) many variables.  As in Proposition 3.10 of \cite{BY-U}, such a definable predicate in infinitely  many variables can be identified with a continuous function from $S_{\omega}(T)$ to $[0,1]$, where $S_{\omega}(T)$ is the space of complete types of $T$ in a fixed countable sequence of variables.  We feel free to call such a function (and the corresponding function on $\omega$-tuples  in  models of $T$ to $[0,1]$)  a CL-formula in infinitely many variables. Let us now fix  a definable predicate (so CL-formula) $\phi({\bar x},{\bar y})$, where 
${\bar x}$ is a finite tuple of variables, and $\bar y$ is a possibly infinite (but countable) sequence of variables. A ``code'' for  the CL-formula (with parameters ${\bar a}$ and finite tuple ${\bar x}$ of free variables) $\phi({\bar x},{\bar a})$ will then be a CL-imaginary in the sense of \cite{BY-U},  and all CL-imaginaries will arise in this way.
The  precise formalism (involving new sorts with their own distance relation) is not so important, but the point is that the code will be something fixed by precisely those automorphisms 
(of a saturated model) which fix the formula $\phi({\bar x},{\bar a})$.  More precisely, the code will be the equivalence class of ${\bar a}$ with respect to the obvious equivalence 
relation $E_{\phi}({\bar y},{\bar z})$, on tuples of the appropriate length. If ${\bar y}$ is a finite tuple of variables, then we will call a corresponding imaginary (i.e. code for $\phi({\bar x},{\bar a})$) a {\em finitary CL-imaginary}.  We will work in the saturated model $\bar M=\C$ of $T$ which will also be a saturated model of $T_{CL}$. When we speak about interdefinability of various objects, we mean a priori in the sense of automorphisms of ${\bar M}$, 
i.e. two objects are {\em interdefinable} if they are preserved by exactly the same automorphisms of $\bar M$.

The notion of hyperimaginaries is well-established in (usual, classical)  model theory \cite{LaPi}.  A {\em hyperimaginary} is by definition $\bar a/E$, where $\bar a$ is a possibly infinite (but small compared with the saturation) tuple and $E$ a type-definable over $\emptyset$ equivalence relation on tuples of the relevant size.  
Up to interdefinability we may restrict to tuples of length at most $\omega$ (see \cite[Remark 3.1.8]{Wag}). When the length of ${\bar a}$ is finite, we call ${\bar a}/E$ a {\em finitary hyperimaginary}.

\begin{rem}\label{remark: from 4.18 of LaPi}
If $E$ is a type-definable over $\emptyset$ equivalence relation and the $\aut(\bar M)$-orbit of $\bar a/E$ is bounded, then there is a {\em bounded} type-definable over $\emptyset$ equivalence relation $F$ refining $\equiv$ which agrees with $E$ on $[\bar a]_\equiv$.
\end{rem}

\begin{proof}
By \cite[Lemma 4.18]{LaPi}, $F:=E_{KP} \cup (E \cap ([\bar a]_\equiv \times [\bar a]_\equiv))$ works, because $E \cap ([\bar a]_\equiv \times [\bar a]_\equiv)$ is $\emptyset$-type-definable and bounded by assumption. (More precisely, \cite[Lemma 4.18]{LaPi} is stated for finite tuples but works the same for infinite tuples, too).
\end{proof}

The following is routine, but we sketch the proof.

\begin{lem}\label{lemma: CL-imaginaries interdefinable with hyperimaginaries}  (i) Any [finitary] CL-imaginary is interdefinable with a  [finitary]  hyperimaginary.
\newline
(ii) If $E$ is a bounded, type-definable over $\emptyset$ equivalence relation, then each class of $E$ is interdefinable with a sequence of finitary CL-imaginaries.
\end{lem}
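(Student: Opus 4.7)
The plan pivots on the equivalence relation $E_\phi(\bar y,\bar z)\iff\phi(\bar x,\bar y)=\phi(\bar x,\bar z)$ attached to a CL-formula $\phi(\bar x,\bar y)$: for (i) I verify that $E_\phi$ is a classical $\emptyset$-type-definable equivalence relation, turning each CL-imaginary into a hyperimaginary; for (ii) I exhibit a family of CL-formulas whose $E_\phi$'s intersect down to the given $E$.

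For part (i), rewrite the defining condition as $\sup_{\bar x}|\phi(\bar x,\bar y)-\phi(\bar x,\bar z)|=0$. Because $\phi$ is a $\emptyset$-definable predicate and $\sup$ over the full sort is a standard continuous-logic operation, the quantity $\sup_{\bar x}|\phi(\bar x,\bar y)-\phi(\bar x,\bar z)|$ is itself a $\emptyset$-definable predicate in $(\bar y,\bar z)$; its zero set equals the intersection, over positive integers $n$, of the closed conditions $\sup_{\bar x}|\phi(\bar x,\bar y)-\phi(\bar x,\bar z)|\le 1/n$. By the standard passage between CL-predicates and classical formulas from \cite{BY-U}, each such closed condition is classically $\emptyset$-type-definable, so $E_\phi$ is too. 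Hence $\bar a/E_\phi$ is a hyperimaginary, finitary exactly when $\bar a$ is finite, and its interdefinability with the CL-imaginary code for $\phi(\bar x,\bar a)$ is immediate from the very definition of that code.

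For part (ii), the aim is to produce a family $\{\phi_i(\bar x_i,\bar y)\}_i$ of CL-formulas with $\bar y$ finite such that $E=\bigcap_i E_{\phi_i}$; by part (i) each $E_{\phi_i}$ is then classically $\emptyset$-type-definable, and $[\bar a]_E$ is interdefinable with the sequence of finitary CL-imaginaries $(\bar a/E_{\phi_i})_i$. Fix $\bar a,\bar b$ with $\neg E(\bar a,\bar b)$. By type-definability of $E$, choose a classical formula $\theta(\bar u,\bar v)$ implied by $E$ with $\neg\theta(\bar a,\bar b)$, and set
\[
\phi_\theta(\bar x,\bar y):=\sup\{\theta(\bar x,\bar y'):E(\bar y,\bar y')\},
\]
treating $\theta$ as $\{0,1\}$-valued with $0$ standing for ``true.'' Then $\phi_\theta$ depends on $\bar y$ only through $[\bar y]_E$, so $E\subseteq E_{\phi_\theta}$; evaluating at $\bar x=\bar a$ gives $\phi_\theta(\bar a,\bar a)=0$ (since $\theta$ holds on every $E$-pair) and $\phi_\theta(\bar a,\bar b)=1$ (witnessed by $\bar y'=\bar b$), so $\neg E_{\phi_\theta}(\bar a,\bar b)$. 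Running $\theta$ over all such separating pairs yields the desired family.

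The main technical obstacle is verifying that each $\phi_\theta$ is itself a bona fide $\emptyset$-definable CL-predicate, since the supremum is taken over the type-definable set $\{\bar y':E(\bar y,\bar y')\}$ rather than a full sort. This is routine continuous-logic yoga: writing $E(\bar y,\bar y')\iff\rho(\bar y,\bar y')=0$ for a $\emptyset$-definable $\rho\ge 0$, one realizes $\phi_\theta$ as the uniform limit, as $M\to\infty$, of the $\emptyset$-definable predicates $\sup_{\bar y'}\max(0,\theta(\bar x,\bar y')-M\rho(\bar y,\bar y'))$. Alternatively, one can sidestep the explicit construction by invoking the general theory of imaginaries in continuous logic developed in Section~5 of \cite{BY-U}, which already sets up the bi-interpretability between finitary bounded hyperimaginaries and finitary CL-imaginaries and so handles (i) and (ii) simultaneously.
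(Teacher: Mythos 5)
Your proof of part (i) coincides in substance with the paper's: both define $E_\phi(\bar y,\bar z)$ by asking that $\phi(\bar x,\bar y)$ and $\phi(\bar x,\bar z)$ agree as functions of $\bar x$ and observe that this is $\emptyset$-type-definable. This part is fine.

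Your part (ii), however, takes a genuinely different route from the paper, and the route has a gap at its crux: the predicate $\phi_\theta(\bar x,\bar y):=\sup\{\theta(\bar x,\bar y'):E(\bar y,\bar y')\}$ is in general \emph{not} a CL-formula. Since you treat $\theta$ as $\{0,1\}$-valued, $\phi_\theta$ is $\{0,1\}$-valued; its $1$-set is $\{(\bar x,\bar y):\exists\bar y'\,(E(\bar y,\bar y')\wedge\neg\theta(\bar x,\bar y'))\}$, which (being an $\exists$ of a type-definable set, in a saturated model) is type-definable, hence closed — but it has no reason to be open. So $\phi_\theta$ is merely upper semi-continuous. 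Concretely, take $T$ to be the theory of a circle $S^1$ carrying predicates $R_q(u,v)$ for rational $q$ expressing $d(u,v)<q$; let $E:=\bigwedge_n R_{1/n}$ (infinitesimal closeness — bounded, $\emptyset$-type-definable, $\C/E\cong S^1$) and $\theta:=R_1$. Then $\phi_\theta(x,y)$ equals the indicator of $\st(d(x,y))\ge 1$, whose $1$-set is closed but not open in $S_2(\emptyset)$, so $\phi_\theta$ is not continuous. Your attempted fix via a uniform limit of $\sup_{\bar y'}\max\bigl(0,\theta(\bar x,\bar y')-M\rho(\bar y,\bar y')\bigr)$ does not repair this: the convergence is pointwise (and decreasing) but not uniform — in the above example one can, for each $M$, exhibit a type with $\phi_\theta=0$ and $f_M\ge 1/2$ — and Dini's theorem is inapplicable precisely because continuity of the limit is what one is trying to prove. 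The alternative you mention, citing Section 5 of \cite{BY-U} wholesale, is not a substitute for a proof of the present lemma, which is what sets up that correspondence in this paper.

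It is also a warning sign that your argument for (ii) nowhere uses the hypothesis that $E$ is \emph{bounded}. The paper's proof uses it crucially: after reducing (via a standard decomposition) to $E$ defined by countably many formulas, boundedness makes $\C/E$ a compact metrizable space carrying an $\aut(\C)$-invariant metric $d$, and the single CL-formula $d_E(\bar x,\bar y):=d(\bar x/E,\bar y/E)$ is continuous precisely because the level sets $\{d_E\le r\}$ and $\{d_E\ge r\}$ are type-definable. The code of $d_E(\bar x,\bar a)$ is then interdefinable with $\bar a/E$. That single-formula metric construction is the idea your proof is missing.
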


\begin{proof}
(i) If $\phi({\bar x},{\bar y})$ is a CL-formula where ${\bar y}$ is a possibly countably infinite tuple,  then the equivalence relation $E({\bar y}, {\bar z})$ which says of $({\bar b}, {\bar c})$ that the functions $\phi({\bar x}, {\bar b})$ and $\phi({\bar x}, {\bar c})$ are the same is a type-definable over $\emptyset$ equivalence relation in $T$. (Indeed, consider any $\bar b,\bar c$ such that $E(\bar b,\bar c)$ does not hold, i.e. for some $\bar a$, $\phi(\bar a,\bar b) \ne \phi(\bar a, \bar c)$. Then there are formulas $\varphi(\bar x,\bar y) \in \tp(\bar a\bar b/\emptyset)$ and $\psi(\bar x,\bar y) \in \tp(\bar a\bar c/\emptyset)$ such that whenever $\models \varphi(\bar a',\bar b') \wedge \psi(\bar a',\bar c')$, then $\phi(\bar a',\bar b') \ne \phi(\bar a', \bar c')$. Put $\theta(\bar y,\bar z):=\exists \bar x \varphi(\bar x,\bar y) \wedge \psi(\bar x,\bar z)$. Then $\models \theta(\bar b,\bar c)$ and whenever  $\models \theta(\bar b',\bar c')$, then $\phi(\bar a',\bar b') \ne \phi(\bar a', \bar c')$ for some/any $\bar a'$ such that $\models \varphi(\bar a',\bar b') \wedge \psi(\bar a',\bar c')$. So we have shown that the complement of $E$ is $\bigvee$-definable over $\emptyset$.)
\newline
(ii) By \cite[Theorem 4.15, Corollary 1.5]{LaPi} and Remark \ref{remark: from 4.18 of LaPi}, without loss of generality $E$ lives on finite tuples. 
It is well-known that $E$ is equivalent to a conjunction of equivalence relations each of which is defined by a countable collection of formulas over $\emptyset$ and is also bounded (see \cite[Lemma 3.1.3]{Wag}). So we may assume that $E$ is defined by a countable collection of formulas. Then $\C/E$ is a compact space, metrizable via an $\aut(\C)$-invariant metric $d$ (see \cite[Section 3, p. 237]{KrNe}). Define $d_E(\bar x, \bar y):= d(\bar x/E, \bar y/E)$. This is clearly a CL-formula, and we see that each $\bar a/E$ is interdefinable with the code of $d_E(\bar x, \bar a)$.
\end{proof}

Let $\acl_{CL}^{eq}(\emptyset)$ denote the  collection of CL-imaginaries which have a bounded number of conjugates under $\aut({\bar M})$. Likewise  $\bdd^{heq}(\emptyset)$ is the collection of hyperimaginaries with a bounded number of conjugates under $\aut(\bar M)$.  
By Lemma \ref{lemma: CL-imaginaries interdefinable with hyperimaginaries} and Remark \ref{remark: from 4.18 of LaPi}, we get

\begin{cor}\label{corollary: CL-acl = bdd} 
(i) Up to interdefinability, $\acl_{CL}^{eq}(\emptyset)$ coincides with $\bdd^{heq}(\emptyset)$. 
\newline
(ii) Moreover, $\acl_{CL}^{eq}(\emptyset)$ is interdefinable with the collection of finitary CL-imaginaries with a bounded number of conjugates under $\aut({\bar M})$. 
\end{cor}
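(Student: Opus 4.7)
The plan is to view the corollary as a straightforward bookkeeping consequence of Lemma \ref{lemma: CL-imaginaries interdefinable with hyperimaginaries} combined with Theorem 4.15 of \cite{LaPi}, noting that ``having a bounded number of $\aut(\bar M)$-conjugates'' is obviously invariant under interdefinability, so it transfers across both translation dictionaries (CL-imaginaries $\leftrightarrow$ hyperimaginaries).

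For the inclusion $\acl^{eq}_{CL}(\emptyset) \subseteq \bdd^{heq}(\emptyset)$ (up to interdefinability), I would start with an arbitrary CL-imaginary $e$ with boundedly many conjugates. By Lemma \ref{lemma: CL-imaginaries interdefinable with hyperimaginaries}(i), $e$ is interdefinable with a hyperimaginary $e'$; since interdefinability means preservation by exactly the same automorphisms, $e'$ has the same (bounded) orbit as $e$, hence lies in $\bdd^{heq}(\emptyset)$. For the converse inclusion, given a bounded hyperimaginary $h$, Theorem 4.15 of \cite{LaPi} says $h$ is interdefinable with a sequence of finitary bounded hyperimaginaries $(\bar a_i/E_i)_{i\in I}$. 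Each $E_i$ is a bounded type-definable over $\emptyset$ equivalence relation on finite tuples, so Lemma \ref{lemma: CL-imaginaries interdefinable with hyperimaginaries}(ii) replaces each class $\bar a_i/E_i$ by an interdefinable sequence of finitary CL-imaginaries $(e_{i,j})_j$. The concatenated sequence $(e_{i,j})_{i,j}$ is then interdefinable with $h$; again boundedness of the orbit is transported across the interdefinability, so each $e_{i,j}$ (and hence the full tuple) lives in $\acl^{eq}_{CL}(\emptyset)$.

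For part (ii), essentially no extra work is required: the second half of the argument above already produces, from any element of $\bdd^{heq}(\emptyset)$, an interdefinable sequence of \emph{finitary} CL-imaginaries, each with a bounded orbit by invariance of boundedness under interdefinability. Composing with the translation in part (i) gives the stated interdefinability between $\acl^{eq}_{CL}(\emptyset)$ and the collection of finitary CL-imaginaries with boundedly many conjugates.

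The only place where one has to be even mildly careful is making sure that the sequences produced at each step are ``small'' (at most $\omega$ long), so that the resulting objects still count as CL-imaginaries (respectively hyperimaginaries) in the sense introduced above. This is handled by the standing convention recorded before Lemma \ref{lemma: CL-imaginaries interdefinable with hyperimaginaries} that, up to interdefinability, one may restrict both kinds of imaginaries to tuples of length at most $\omega$, together with the fact that each $E_i$ in Theorem 4.15 of \cite{LaPi} is built from countably many formulas, exactly as used in the proof of Lemma \ref{lemma: CL-imaginaries interdefinable with hyperimaginaries}(ii). I do not expect a genuine obstacle here; the content of the corollary is really the two input results, and the proof is mostly a matter of stitching them together and observing that boundedness is preserved by the stitching.
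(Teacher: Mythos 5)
Your proof is correct and follows the same route the paper intends: the paper introduces Theorem 4.15 of \cite{LaPi} in the sentence immediately preceding the corollary and then just says ``Therefore, by Lemma \ref{lemma: CL-imaginaries interdefinable with hyperimaginaries}, we get'' the statement, so the intended argument is exactly the stitching you carry out (Lemma \ref{lemma: CL-imaginaries interdefinable with hyperimaginaries}(i) for one inclusion, Theorem 4.15 plus Lemma \ref{lemma: CL-imaginaries interdefinable with hyperimaginaries}(ii) for the other, with boundedness transported across interdefinability). Your remark that part (ii) falls out of the same chain because the backward translation lands on \emph{finitary} CL-imaginaries is precisely the observation the corollary records.
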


We now appeal to the local stability results in \cite{BY-U} (which go somewhat beyond what we deduced purely from Grothendieck in Section 3 of \cite{HKP1}).  
Fix a finite tuple ${\bar x}$ of variables  and consider $\Delta_{\textrm{st}}({\bar x})$, the collection of all stable formulas  (without parameters) $\phi({\bar x},{\bar y})$ of $T_{CL}$, where $\bar y$ varies and where stability 
of  $\phi(\bar x, \bar y)$ means that for all $\epsilon > 0$ there do not exist $\bar a_{i}, \bar b_{i}$ for $i<\omega$ (in the monster model) such that for all $i<j$, $|\phi(\bar a_{i},\bar b_{j}) - \phi(\bar a_{j},\bar b_{i})| \geq \epsilon$. 
(By Ramsey theorem and compactness, $\phi(\bar x, \bar y)$ is stable if and only if whenever $(\bar a_i,\bar b_i)_{i<\omega}$ is indiscernible, then $\phi(\bar a_i,\bar b_j)=\phi(\bar a_j, \bar b_i)$ for $i<j$.)
For an $n$-tuple $\bar b$ and set $A$ of parameters (including possibly CL-imaginaries), 
$\tp_{\Delta_{\textrm{st}}}({\bar b}/A)$ is the function taking the formula  $\phi({\bar x},{\bar a})$ to  $\phi({\bar b},{\bar a})$, where $\phi({\bar x},{\bar y})\in \Delta_{\textrm{st}}$ and $\phi({\bar x},{\bar a})$ is over $A$ (i.e invariant under $\aut({\bar M}/A)$).  By definition, a {\em complete $\Delta_{\textrm{st}}$-type over $A$} is something of the form $\tp_{\Delta_{\textrm{st}}}({\bar b}/A)$ (and $\bar b$ is a realization of it).

\begin{rem}\label{remark: types over bdd coincide with types over CL-acl}
For any ${\bar b}$, $\tp({\bar b}/\bdd^{heq}(\emptyset))$ (in the classical case) coincides with $\tp_{\Delta_{\textrm{st}}}({\bar b}/\acl_{CL}^{eq}(\emptyset))$ in the continuous framework, meaning that  $\tp({\bar b}/\bdd^{heq}(\emptyset)) =  \tp({\bar b'}/\bdd^{heq}(\emptyset))$ if and only if $\tp_{\Delta_{\textrm{st}}}({\bar b}/\acl_{CL}^{eq}(\emptyset)) =\tp_{\Delta_{\textrm{st}}}({\bar b'}/\acl_{CL}^{eq}(\emptyset))$.
\end{rem}

\begin{proof} Using Corollary \ref{corollary: CL-acl = bdd},  the left hand side always implies the right hand side.  
%
For the other direction, since $\bar x \equiv_{\bdd^{heq}(\emptyset)} \bar y$ is a bounded, type-definable over $\emptyset$ equivalence relation (in fact, it is exactly $E_{KP}$), it is enough to show that for any  bounded, type-definable over $\emptyset$ equivalence relation $E$, whenever $\tp_{\Delta_{\textrm{st}}}({\bar b}/\acl_{CL}^{eq}(\emptyset)) = \tp_{\Delta_{\textrm{st}}}({\bar b'}/\acl_{CL}^{eq}(\emptyset))$, then $E(\bar b, \bar b')$. 
Without loss of generality we may assume that $E$ is defined by countably many formulas.
Let $d_E(\bar x, \bar y)$ be the CL-formula from the proof of Lemma  \ref{lemma: CL-imaginaries interdefinable with hyperimaginaries}(ii). 
As $E$ is bounded, $d_E({\bar x}, {\bar y})$ is stable (because for every indiscernible sequence $(\bar a_i,\bar b_i)_{i<\omega}$ all $\bar a_i$'s are in a single $E$-class and all $\bar b_i$'s are in a single $E$-class, and so $d_E(\bar a_i, \bar b_j)=d(\bar a_i/E,\bar b_j/E)$ is constant for all $i,j<\omega$). The code of $d_E({\bar x},{\bar b})$ is interdefinable with $\bar b /E$, hence it is in  $\acl_{CL}^{eq}(\emptyset)$, 
and so $d_E({\bar x}, {\bar b})$ is over $\acl_{CL}^{eq}(\emptyset)$. Since clearly $d_E(\bar b, \bar b)=0$, we conclude that $d_E(\bar b', \bar b)=0$ which means that $E(\bar b, \bar b')$.
\end{proof} 

If $M$ is a model, then $p=\tp_{\Delta_{\textrm{st}}}({\bar b}/M)$ can be identified with the collection of functions $f_\phi\colon M^n \to \R$ taking $\bar a \in M^n$ to $\phi(\bar b, \bar a)$, for $\phi(\bar x, \bar y) \in \Delta_{\textrm{st}}$. The type $\tp_{\Delta_{\textrm{st}}}({\bar b}/M)$ is said to be {\em definable} (over $M$) if the functions $f_\phi$ are induced by CL-formulas over $M$; it is {\em definable over $A$} if the $f_\phi$'s are induced by CL-formulas over $A$.
A {\em $\varphi(\bar x, \bar y)$-definition of $p$} is a CL-formula $\chi(\bar y)$ such that $\varphi(\bar b, \bar a) = \chi(\bar a)$ for all $\bar a$ from $M$.

The following is a consequence of the local theory developed in Section 7 of \cite{BY-U} and the discussion around gluing in Section 8 of the same paper (see the proof of \cite[Proposition 8.7]{BY-U}). We restrict ourselves to the case needed, i.e. over $\emptyset$. 

\begin{fct}\label{fact: stationarity of types over CL-acl}  Let $p({\bar x})$ be a (CL-)complete $\Delta_{\textrm{st}}$-type over $\acl_{CL}^{eq}(\emptyset)$. Then for any model $M$  (which note contains $\acl_{CL}^{eq}(\emptyset)$) there is a unique complete $\Delta_{\textrm{st}}$-type $q(\bar x)$ over $M$ such that $q(\bar x)$ extends $p(\bar x)$ and $q$ is definable over $\acl_{CL}^{eq}(\emptyset)$. We say $q = p|M$. In particular, if $M \prec N$, then $p|M$ is precisely the restriction of $p|N$ to $M$. 
\end{fct}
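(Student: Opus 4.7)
The plan is to deduce this fact from the local stability theory for continuous logic developed in Sections 7--8 of \cite{BY-U}, applied one CL-formula at a time and then glued together. Existence, uniqueness, and the restriction compatibility will all fall out of the same formula-by-formula analysis combined with Corollary \ref{corollary: CL-acl = bdd}.

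For existence, fix a realization $\bar b \models p$ in $\bar M$, and for each stable CL-formula $\varphi(\bar x, \bar y) \in \Delta_{\textrm{st}}$, apply the local definability theorem for stable continuous formulas to obtain a CL-formula $d_\varphi(\bar y)$ such that $\varphi(\bar b, \bar a) = d_\varphi(\bar a)$ for every $\bar a$ from $M$. The crucial intermediate step is to show that the code of $d_\varphi$ lies in $\acl_{CL}^{eq}(\emptyset)$: since $p$ is complete over $\acl_{CL}^{eq}(\emptyset)$, any two realizations of $p$ share the same $\varphi$-definition, so the orbit of the code of $d_\varphi$ under $\aut(\bar M / \acl_{CL}^{eq}(\emptyset))$ is a singleton; by Corollary \ref{corollary: CL-acl = bdd} (which identifies CL-algebraic closure with bounded hyperimaginary closure) this code then belongs to $\acl_{CL}^{eq}(\emptyset)$. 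One then defines $q := p|M$ by the schema
\[
q\bigl(\varphi(\bar x, \bar a)\bigr) := d_\varphi(\bar a)
\]
for $\varphi \in \Delta_{\textrm{st}}$ and $\bar a$ from $M$; consistency and completeness of $q$ follow from saturation of $\bar M$ together with completeness of $p$, and $q$ extends $p$ because each $d_\varphi$ agrees with $p$ on parameters from $\acl_{CL}^{eq}(\emptyset)$.

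For uniqueness, suppose $q'$ is another complete $\Delta_{\textrm{st}}$-type over $M$ extending $p$ and definable over $\acl_{CL}^{eq}(\emptyset)$ via some schema $\{d'_\varphi\}$. Then $d_\varphi$ and $d'_\varphi$ are CL-formulas whose codes both lie in $\acl_{CL}^{eq}(\emptyset)$ and which agree with $\varphi(\bar b, \cdot)$ on $\acl_{CL}^{eq}(\emptyset)$-parameters; since a CL-formula is determined by its values on any set containing its defining parameters and $M \supseteq \acl_{CL}^{eq}(\emptyset)$, one concludes $d_\varphi = d'_\varphi$ on $M$, hence $q = q'$. The restriction compatibility $p|M = (p|N)|_M$ for $M \prec N$ is then immediate, as both are $\Delta_{\textrm{st}}$-type extensions of $p$ over $M$ definable over $\acl_{CL}^{eq}(\emptyset)$ and so must coincide. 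The principal subtlety, and the reason it is convenient to import the statement from \cite{BY-U} rather than reproving it, is ensuring simultaneously that each $d_\varphi$ really lands in $\acl_{CL}^{eq}(\emptyset)$ (which is what Corollary \ref{corollary: CL-acl = bdd} buys) and that the family $\{d_\varphi\}$ glues into a single complete type realizable by one tuple (which is the glueing/imaginaries machinery of Section 8 of \cite{BY-U}); without the first, the $\varphi$-definitions produced by local stability would a priori lie in a strictly larger parameter set and uniqueness would fail.
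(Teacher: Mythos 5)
The paper itself offers no proof of this Fact: it is imported wholesale from \cite{BY-U}, with the phrase ``a consequence of the local theory developed in Section 7 of \cite{BY-U} and the discussion around glueing in Section 8.'' Your proposal is therefore not ``the same approach as the paper''---it is an attempt to reconstruct the argument rather than cite it---and I believe the reconstruction has a genuine gap at the central step.

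The problem lies in the claim: ``since $p$ is complete over $\acl_{CL}^{eq}(\emptyset)$, any two realizations of $p$ share the same $\varphi$-definition, so the orbit of the code of $d_\varphi$ under $\aut(\bar M/\acl_{CL}^{eq}(\emptyset))$ is a singleton.'' This is false as stated. Fix $M$. Two realizations $\bar b, \bar b'$ of $p$ generally have \emph{different} $\varphi$-types over $M$ (one extension can be ``free'' over $\acl_{CL}^{eq}(\emptyset)$, another can fork/divide), hence different $\varphi$-definitions. Equivalently, an automorphism $\sigma$ fixing $\acl_{CL}^{eq}(\emptyset)$ pointwise sends the code of $\tp_\varphi(\bar b/M)$ to the code of $\tp_\varphi(\sigma(\bar b)/\sigma(M))$; these are different in general, so the orbit is not a singleton. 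Assuming that it is a singleton is exactly assuming the $\acl_{CL}^{eq}(\emptyset)$-definability you are trying to prove---the argument is circular. In the classical framework this would be like trying to derive the finite equivalence relation theorem by just noting that $p$ is complete over $\acl^{eq}(\emptyset)$.

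What the argument actually needs is the following structure, and it is not automatic. First, choose a realization $\bar b$ of $p$ whose $\Delta_{\textrm{st}}$-type over $M$ is a \emph{free} (nonforking) extension of $p$; the existence of such a $\bar b$ is itself a theorem of local stability (BY-U, Section 7). Second, one must show that the canonical parameter of the $\varphi$-definition of this free extension has \emph{boundedly many} (not a singleton's worth of) conjugates under $\aut(\bar M)$, via the fact that a $\varphi$-type over a small base has only boundedly many free $\varphi$-extensions to any model; this gives that the code lies in $\acl_{CL}^{eq}(\emptyset)$ (using idempotence of $\acl_{CL}^{eq}$), but boundedness of free extensions is again a substantive theorem, not an orbit count. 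Third, uniqueness: your observation that a CL-formula is ``determined by its values on any set containing its defining parameters'' does not hold for an arbitrary set such as $\acl_{CL}^{eq}(\emptyset)$; a CL-formula over $A$ is determined by its induced function on $S(A)$, i.e.\ on all \emph{types} over $A$, not by its values on $A$. Identifying $d_\varphi$ with $d'_\varphi$ therefore requires symmetry and the canonical-base machinery, not just agreement on parameters from $\acl_{CL}^{eq}(\emptyset)$.

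In short, the stationarity of types over $\acl_{CL}^{eq}(\emptyset)$ is precisely the content of the local stability theorems in Sections 7--8 of \cite{BY-U} (existence and boundedness of free extensions, canonical bases, symmetry, and the interpretation via CL-imaginaries); it is the thing to be \emph{cited}, not something that falls out of the definability theorem plus Corollary \ref{corollary: CL-acl = bdd} and a one-line orbit argument. Your outline has the right ingredients in the vicinity---the formula-by-formula definability, Corollary \ref{corollary: CL-acl = bdd} for identifying where codes live, and glueing from Section 8 of \cite{BY-U}---but the step that is supposed to force the codes into $\acl_{CL}^{eq}(\emptyset)$ begs the question.
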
 

\begin{dfn}\label{definition: stable independence} 
We say that ${\bar b}$ is {\em stably independent} from $B$  (or that ${\bar b}$ and $B$ are {\em stably independent}) if $\tp_{\Delta_{\textrm{st}}}({\bar b}/B)$ equals the restriction of $p|M$ to $B$, where $M$ is some model containing $B$ and $p = \tp_{\Delta_{\textrm{st}}}({\bar b}/\acl_{CL}^{eq}(\emptyset))$. 
\end{dfn}

Stable independence is clearly invariant under automorphisms. The usual Erd\"{o}s-Rado arguments, together with Fact \ref{fact: stationarity of types over CL-acl} give:

\begin{cor}\label{corollary: existence of invariant stably independent sequences} 
Let  $q$ be a complete $\Delta_{\textrm{st}}$-type over $\acl_{CL}^{eq}(\emptyset)$. Then there is an infinite sequence $({\bar b_{i}}:i<\omega)$ of realizations of $q$ which is indiscernible and such that ${\bar b_{i}}$ is stably independent from $\{{\bar b}_{j}: j< i\}$ for all $i$.
\end{cor}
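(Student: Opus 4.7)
The plan is to build a long ``Morley-like'' sequence by transfinite induction using stationarity and then apply Erd\H{o}s--Rado to extract an indiscernible subsequence of order type $\omega$. Concretely, fix a sufficiently large cardinal $\kappa$ (for instance $\kappa = \beth_{(2^{|T|})^+}$) and construct $(\bar b_\alpha : \alpha < \kappa)$ in the monster model by recursion: at stage $\alpha$, let $M_\alpha$ be a small model containing $\{\bar b_\beta : \beta < \alpha\}$, and let $\bar b_\alpha$ realize $q | M_\alpha$, which exists and is a complete $\Delta_{\textrm{st}}$-type over $M_\alpha$ by Fact \ref{fact: stationarity of types over CL-acl}. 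By Definition \ref{definition: stable independence}, $\bar b_\alpha$ is stably independent from $M_\alpha$.

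The second step is to verify monotonicity of stable independence: if $\bar b$ is stably independent from $B$ and $B' \subseteq B$, then $\bar b$ is stably independent from $B'$. This is immediate from Fact \ref{fact: stationarity of types over CL-acl}, since the unique definable-over-$\acl_{CL}^{eq}(\emptyset)$ extension $p | M$ to any model $M \supseteq B \supseteq B'$ restricts compatibly. Applied to the construction above, this shows that each $\bar b_\alpha$ is stably independent from $\{\bar b_\beta : \beta < \alpha\}$, as required.

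The third step is to extract genuine indiscernibility. The construction so far only guarantees that the $\Delta_{\textrm{st}}$-types $\tp_{\Delta_{\textrm{st}}}(\bar b_\alpha / \{\bar b_\beta : \beta < \alpha\})$ agree with the canonical stationary extension of $q$, which gives $\Delta_{\textrm{st}}$-indiscernibility but not full $T$-indiscernibility. To upgrade to an indiscernible sequence in the classical sense, I would apply the standard Erd\H{o}s--Rado/Ramsey argument in the underlying first-order theory $T$: having chosen $\kappa$ large enough, there exists an infinite subsequence $(\bar b_{\alpha_i} : i < \omega)$ with $\alpha_0 < \alpha_1 < \cdots$ which is indiscernible over $\emptyset$ in $T$. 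By the monotonicity observation of the previous step, $\bar b_{\alpha_i}$ remains stably independent from $\{\bar b_{\alpha_0}, \ldots, \bar b_{\alpha_{i-1}}\}$ since this set is a subset of $\{\bar b_\beta : \beta < \alpha_i\}$.

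The only mildly delicate point — and the one I would treat with the most care — is the interaction between the two notions of indiscernibility: the construction delivers stable independence (a condition about $\Delta_{\textrm{st}}$-types), while the final statement asks for indiscernibility (a condition about all types). The Erd\H{o}s--Rado step is needed precisely to bridge this gap, and one must verify that passing to the subsequence does not disturb the stable-independence condition, which it does not thanks to monotonicity.
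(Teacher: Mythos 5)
Your overall strategy matches the paper's, which in fact gives no proof beyond the single sentence that ``the usual Erd\H{o}s--Rado arguments, together with Fact~\ref{fact: stationarity of types over CL-acl}, give'' the corollary: build a long sequence of realizations of $q$ each realizing the stationary extension over all previous ones, then extract an indiscernible sequence. However, the third step as you state it is incorrect in a way worth flagging, because it is a classic pothole.

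Erd\H{o}s--Rado does \emph{not} produce an indiscernible \emph{subsequence} of $(\bar b_\alpha : \alpha < \kappa)$. For a genuine indiscernible subsequence of a sequence of length $\kappa = \beth_{(2^{|T|})^+}$ one would need partition properties in the territory of Ramsey cardinals, which is far beyond ZFC. What the standard Erd\H{o}s--Rado argument gives is a \emph{new} indiscernible sequence $(\bar c_i : i < \omega)$, realized in a saturated model, whose Ehrenfeucht--Mostowski type over the chosen parameter set is ``based on'' the original sequence: for each $n$, $\tp(\bar c_0,\dots,\bar c_{n-1}/A)$ equals $\tp(\bar b_{\alpha_0},\dots,\bar b_{\alpha_{n-1}}/A)$ for some $\alpha_0 < \cdots < \alpha_{n-1} < \kappa$. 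That is enough here, but it changes the verification at the end: instead of citing monotonicity for a subsequence of the original, you must argue that stable independence is automorphism-invariant and is encoded in the type of $\bar b_{\alpha_{n-1}}$ over $A \cup \{\bar b_{\alpha_0},\dots,\bar b_{\alpha_{n-2}}\}$, and then transfer it across the type equality to the $\bar c_i$'s.

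A related point: you perform the extraction ``over $\emptyset$.'' Since $q$ lives over $\acl_{CL}^{eq}(\emptyset) = \bdd^{heq}(\emptyset)$, which is not in general fixed pointwise by $\aut(\C)$, extracting over $\emptyset$ only guarantees that the $\bar c_i$ realize some conjugate of $q$, not $q$ itself, and may fail to make them all realize the \emph{same} complete $\Delta_{\textrm{st}}$-type over $\acl_{CL}^{eq}(\emptyset)$. The clean fix is to extract over a small model $M_0$ (contained in all the $M_\alpha$'s), since any model interdefinably contains $\bdd^{heq}(\emptyset)$; then both problems disappear at once. With these two adjustments the argument is correct and is the one the paper intends.
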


The following consequence of Fact \ref{fact: stationarity of types over CL-acl} will also be important for us.

\begin{cor}\label{corollary: by stationarity from Udi's note} Suppose we have finite tuples ${\bar a}$ and ${\bar b}$ from the (classical) model $\C$. Suppose that ${\bar a}$ is stably independent from ${\bar b}$.  Then for any stable CL-formula $\psi({\bar x}, {\bar y})$ (over $\emptyset$), the value of $\psi({\bar a}, {\bar b})$ depends only on $\tp({\bar a}/\bdd^{heq}(\emptyset))$ and $\tp({\bar b}/\bdd^{heq}(\emptyset))$ (in the sense of the classical structure $\C$). 
\end{cor}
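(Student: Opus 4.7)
The plan is to combine stationarity (Fact~\ref{fact: stationarity of types over CL-acl}) with the identification of $\bdd^{heq}(\emptyset)$-types and $\Delta_{\textrm{st}}$-types over $\acl_{CL}^{eq}(\emptyset)$ (Remark~\ref{remark: types over bdd coincide with types over CL-acl}). Concretely, let $p(\bar x)=\tp_{\Delta_{\textrm{st}}}(\bar a/\acl_{CL}^{eq}(\emptyset))$ and pick a small model $M$ containing $\bar b$ (and hence $\acl_{CL}^{eq}(\emptyset)$). By Fact~\ref{fact: stationarity of types over CL-acl}, $p$ has a unique extension $p|M$ to a complete $\Delta_{\textrm{st}}$-type over $M$ that is still definable over $\acl_{CL}^{eq}(\emptyset)$. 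In particular, for the given stable CL-formula $\psi(\bar x,\bar y)$ the $\psi$-definition of $p|M$ is a CL-formula $\chi_\psi(\bar y)$ over $\acl_{CL}^{eq}(\emptyset)$, and $\chi_\psi$ depends only on $p$ itself (not on $M$ or $\bar b$).

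Next I would unpack stable independence. By Definition~\ref{definition: stable independence}, $\tp_{\Delta_{\textrm{st}}}(\bar a/\bar b)$ coincides with the restriction of $p|M$ to $\bar b$. In particular,
\[
\psi(\bar a,\bar b)\;=\;\chi_\psi(\bar b).
\]
Thus the value $\psi(\bar a,\bar b)$ is obtained by applying a fixed CL-formula $\chi_\psi$ over $\acl_{CL}^{eq}(\emptyset)$, depending only on $p$, to the tuple $\bar b$. Since $\chi_\psi$ is a CL-formula over $\acl_{CL}^{eq}(\emptyset)$, its value at $\bar b$ is determined by $\tp_{\Delta_{\textrm{st}}}(\bar b/\acl_{CL}^{eq}(\emptyset))$.

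Finally, I would invoke Remark~\ref{remark: types over bdd coincide with types over CL-acl} twice: once to conclude that $p=\tp_{\Delta_{\textrm{st}}}(\bar a/\acl_{CL}^{eq}(\emptyset))$ is determined by $\tp(\bar a/\bdd^{heq}(\emptyset))$ (so $\chi_\psi$ depends only on this classical type), and once more to conclude that $\tp_{\Delta_{\textrm{st}}}(\bar b/\acl_{CL}^{eq}(\emptyset))$ is determined by $\tp(\bar b/\bdd^{heq}(\emptyset))$ (so $\chi_\psi(\bar b)$ depends only on this classical type). Putting these together yields that $\psi(\bar a,\bar b)$ depends only on $\tp(\bar a/\bdd^{heq}(\emptyset))$ and $\tp(\bar b/\bdd^{heq}(\emptyset))$, as required.

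There is essentially no obstacle: the statement is a direct reformulation of stationarity of $\Delta_{\textrm{st}}$-types over $\acl_{CL}^{eq}(\emptyset)$, the only subtlety being that one must translate between the classical language (where $\bdd^{heq}(\emptyset)$ lives) and the continuous-logic language (where $\acl_{CL}^{eq}(\emptyset)$ and the $\psi$-definition live), which is precisely what Remark~\ref{remark: types over bdd coincide with types over CL-acl} and Corollary~\ref{corollary: CL-acl = bdd} were set up to do.
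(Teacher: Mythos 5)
Your proposal is correct and follows essentially the same route as the paper's proof: define $p=\tp_{\Delta_{\textrm{st}}}(\bar a/\acl_{CL}^{eq}(\emptyset))$, use Fact~\ref{fact: stationarity of types over CL-acl} to get the $\psi$-definition $\chi$ over $\acl_{CL}^{eq}(\emptyset)$, use stable independence to identify $\psi(\bar a,\bar b)$ with $\chi(\bar b)$, and use Remark~\ref{remark: types over bdd coincide with types over CL-acl} (together with Corollary~\ref{corollary: CL-acl = bdd}) to translate back to classical types over $\bdd^{heq}(\emptyset)$. The only cosmetic difference is that you phrase the last step via $\tp_{\Delta_{\textrm{st}}}(\bar b/\acl_{CL}^{eq}(\emptyset))$ whereas the paper passes directly to $\tp(\bar b/\bdd^{heq}(\emptyset))$; these are interchangeable by the Remark.
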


\begin{proof}  
Let $p({\bar x}):= \tp_{\Delta_{\textrm{st}}}({\bar a}/\acl_{CL}^{eq}(\emptyset))$.
The $\psi({\bar x},{\bar y})$-type of $p|\C$ is by Fact \ref{fact: stationarity of types over CL-acl} definable by  a CL-formula $\chi(\bar y)$ over $\acl_{CL}^{eq}(\emptyset)$.  So assuming the stable independence of $\bar a$ and 
$\bar b$, by definition and Fact \ref{fact: stationarity of types over CL-acl}, the value of $\psi({\bar a},{\bar b})$ is equal to $\chi({\bar b})$, which by Remark \ref{remark: types over bdd coincide with types over CL-acl} depends only on $\tp({\bar b}/\bdd^{heq}(\emptyset))$. 
If ${\bar a}$ is replaced by another realization ${\bar a}'$ of $p$ (equivalently, $\bar a' \models \tp(\bar a/\bdd^{heq}(\emptyset))$) which is stably independent from another realization ${\bar b}'$ of $\tp({\bar b}/\bdd^{heq}(\emptyset))$, then  the above shows that 
$\psi({\bar a}',{\bar b}') = \chi({\bar b}')= \chi({\bar b}) = \psi({\bar a},{\bar b})$. 
\end{proof}

\begin{prop}\label{proposition: amalgamation of wide types}
Let $\mu=\mu_{\bar x}$ be a global, $\emptyset$-definable Keisler measure.  Let ${\bar a}$ and ${\bar b}$ be tuples of the same length from $\C$, with the same type over $\bdd^{heq}(\emptyset)$, and stably independent.  Let $p({\bar x},{\bar a})$ be a complete type over $\bar a$ which is ``$\mu$-wide'' in the sense that every formula in $p({\bar x},{\bar a})$ gets $\mu$-measure $> 0$. Then  the partial type $p({\bar x},{\bar a})\cup p({\bar x},{\bar b})$ is also $\mu$-wide (again in the sense that every formula implied by it has $\mu$-measure $>0$). 
\end{prop}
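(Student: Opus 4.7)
The plan is to reduce to showing $\mu(\chi(\bar x, \bar a) \wedge \chi(\bar x, \bar b)) > 0$ for an arbitrary $\chi(\bar x, \bar y)$ with $\chi(\bar x, \bar a) \in p(\bar x, \bar a)$. Any formula implied by $p(\bar x, \bar a) \cup p(\bar x, \bar b)$ is implied by a finite conjunction drawn from each side, and by completeness of $p$ this can be consolidated into $\chi(\bar x, \bar a) \wedge \chi(\bar x, \bar b)$ for a single $\chi \in p$. Since $\mu$ is $\emptyset$-definable it is in particular $\aut(\C)$-invariant, so $\alpha := \mu(\chi(\bar x, \bar a)) = \mu(\chi(\bar x, \bar b)) > 0$ by wideness. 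The key object is the function $F(\bar y, \bar z) := \mu(\chi(\bar x, \bar y) \wedge \chi(\bar x, \bar z))$, which by $\emptyset$-definability of $\mu$ is a $[0,1]$-valued $\emptyset$-definable function, hence a CL-formula over $\emptyset$; moreover, I want to invoke the continuous-logic machinery of Section~3 of \cite{HKP1} (together with Ben Yaacov's local stability theory from \cite{BY-U}) to conclude that $F$ is a \emph{stable} CL-formula.

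Next, using Corollary \ref{corollary: existence of invariant stably independent sequences} together with Remark \ref{remark: types over bdd coincide with types over CL-acl}, I would produce an indiscernible sequence $(\bar a_i)_{i < \omega}$ of realizations of the common $\bdd^{heq}(\emptyset)$-type of $\bar a$ and $\bar b$, with $\bar a_i$ stably independent from $\{\bar a_j : j < i\}$. By symmetry of stable independence, $\bar a_i$ and $\bar a_j$ are stably independent for every $i \ne j$. Set $A_i := [\chi(\bar x, \bar a_i)] \subseteq S_{\bar x}(\C)$; by $\aut(\C)$-invariance of $\mu$ and $\bar a_i \equiv \bar a$, we have $\mu(A_i) = \alpha$. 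If $\mu(A_i \cap A_j) = 0$ for all $i < j$, inclusion-exclusion (using monotonicity to kill higher-order intersections) gives $\mu(A_1 \cup \dots \cup A_n) = n\alpha$, exceeding $1$ once $n > 1/\alpha$, a contradiction. Hence some $i < j$ satisfy $\mu(A_i \cap A_j) > 0$.

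Finally, apply Corollary \ref{corollary: by stationarity from Udi's note} to the stable CL-formula $F$: the pair $(\bar a_i, \bar a_j)$ is stably independent and matches $(\bar a, \bar b)$ coordinatewise in $\bdd^{heq}(\emptyset)$-type, so $F(\bar a, \bar b) = F(\bar a_i, \bar a_j) = \mu(A_i \cap A_j) > 0$, which is exactly what is wanted. I expect the only real difficulty to be establishing that $F$ is a stable CL-formula (the formal framework of CL-imaginaries set up earlier in this section is precisely designed to support this); everything else is a clean combination of a pigeonhole on an indiscernible stably-independent sequence with the stationarity content of Corollary \ref{corollary: by stationarity from Udi's note}.
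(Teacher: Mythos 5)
Your proposal is correct and follows essentially the same route as the paper: same reduction to a single formula, same stable CL-formula $\psi(\bar y,\bar z)=\mu(\phi(\bar x,\bar y)\wedge\phi(\bar x,\bar z))$, same use of a stably-independent indiscernible sequence plus Corollary~\ref{corollary: by stationarity from Udi's note} to transfer the positive value to $(\bar a,\bar b)$. The only differences are cosmetic: you spell out the pigeonhole step (that some $\mu(A_i\cap A_j)>0$) which the paper leaves implicit, and where you flag stability of the CL-formula as the one point you haven't pinned down, the paper supplies the precise citation, namely Proposition~2.25 of \cite{Hrushovski-approximate}.
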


\begin{proof} By definition, we have to show that if $\phi({\bar x},{\bar a})$ is a formula with $\mu$-measure $> 0$, then  $\phi({\bar x},{\bar a})\wedge\phi({\bar x},{\bar b})$ has 
$\mu$-measure $> 0$.  By $\emptyset$-definability of $\mu$, the function $\psi({\bar y},{\bar z})$ defined to be  $\mu(\phi({\bar x},{\bar y})\wedge\phi({\bar x}, {\bar z}))$ is 
definable over $\emptyset$, i.e. is a CL-formula without parameters. Moreover, by Proposition 2.25 of \cite{Hrushovski-approximate},  $\psi({\bar y},{\bar z})$ is stable. 
Bearing in mind Remark \ref{remark: types over bdd coincide with types over CL-acl}, let, by Corollary \ref{corollary: existence of invariant stably independent sequences},  $({\bar a}_{i}:i<\omega)$ be an indiscernible sequence of realizations of $q:=\tp({\bar a}/\bdd^{heq}(\emptyset))$  such that ${\bar a}_{j}$ and ${\bar a}_{i}$ are stably independent for all $i<j$ (equivalently for some $i<j$).   Since $\mu$ is $\aut(\C)$-invariant, we see that $\mu(\phi({\bar x},{\bar a}_{i}))$ is positive and  constant for all $i$, and $\mu(\phi({\bar x},{\bar a}_{i})\wedge \phi({\bar x},{\bar a}_{j}))$ is positive (and constant) for $i\neq j$.  In particular, $\psi({\bar a}_{0}, {\bar a}_{1}) > 0$.  By Corollary \ref{corollary: by stationarity from Udi's note}, $\psi({\bar a}, {\bar b}) > 0$, which is what we had to prove. 
\end{proof} 

\begin{prop}\label{proposition: main theorem for definable measures}
Suppose that (the classical, first order theory) $T$ is definably amenable. Then $T$ is $G$-compact. 
In fact, the diameter of each Lascar strong type (over $\emptyset$) is bounded by $2$.
\end{prop}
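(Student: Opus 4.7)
The plan is to show that for any $\bar a, \bar b \in \C$ with $\bar a \equiv_{\bdd^{heq}(\emptyset)} \bar b$ (the same KP-strong type), one has $d_L(\bar a,\bar b) \leq 2$, by interposing a single ``stably generic'' tuple $\bar a^*$ such that each of the pairs $(\bar a, \bar a^*)$ and $(\bar a^*, \bar b)$ is witnessed within a single model, i.e.\ has Lascar distance at most $1$. Since $E_L \subseteq E_{KP}$ always holds, this simultaneously yields $E_L = E_{KP}$ (hence G-compactness) with the advertised diameter bound $2$.

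First I would secure a $\emptyset$-definable global Keisler measure of suitable shape. Fix an enumeration $\bar m$ of some small elementary submodel of $\C$ and let $\bar x$ be the corresponding tuple of variables. Definable amenability provides, for each individual formula, an extending $\emptyset$-definable global Keisler measure in the relevant finite variables. A compactness argument inside the space of $\emptyset$-definable global Keisler measures in $\bar x$ (which is closed in the compact space of all global Keisler measures in $\bar x$, since both finite additivity and $\emptyset$-definability---expressible as $\bar b \equiv \bar b' \Rightarrow \mu(\varphi(\bar x,\bar b)) = \mu(\varphi(\bar x,\bar b'))$---are closed conditions), combined with the standard extension of Keisler measures from sub-tuples, then lifts this property to the entire partial type $\tp(\bar m/\emptyset)$, producing a $\emptyset$-definable $\mu_{\bar x}$ concentrated on $\tp(\bar m/\emptyset)$. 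The key feature is that any realization of a $\mu$-wide complete global type in $S_{\bar x}(\C)$ enumerates an elementary submodel of $\C$. Fix any such $\mu$-wide $p \in S_{\bar x}(\C)$ (which exists because $\mu$ has non-empty support).

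Given $\bar a, \bar b$ with $\bar a \equiv_{\bdd^{heq}(\emptyset)} \bar b$, I would next produce an intermediate $\bar a^* \in \C$ with $\bar a^* \equiv_{\bdd^{heq}(\emptyset)} \bar a$ and $\bar a^*$ stably independent (in the sense of Definition \ref{definition: stable independence}) from each of $\bar a, \bar b$. Pick a small model $M \supseteq \bar a\bar b$, let $p_0 := \tp_{\Delta_{\textrm{st}}}(\bar a/\acl_{CL}^{eq}(\emptyset))$, and invoke Fact \ref{fact: stationarity of types over CL-acl} to form the unique $\acl_{CL}^{eq}(\emptyset)$-definable extension $p_0|M$. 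Realize $p_0|M$ by a classical tuple $\bar a^* \in \C$---for example by extracting it from an indiscernible sequence of copies of $\bar a$ produced by Corollary \ref{corollary: existence of invariant stably independent sequences} (pushed over $M$). By construction, $\bar a^*$ is stably independent from $M$, hence from $\bar a$ and from $\bar b$ individually, and by Remark \ref{remark: types over bdd coincide with types over CL-acl} we have $\bar a^* \equiv_{\bdd^{heq}(\emptyset)} \bar a$.

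Finally, apply Proposition \ref{proposition: amalgamation of wide types} to the pair $(\bar a, \bar a^*)$: setting $p(\bar x, \bar a) := p|_{\bar a}$ (a $\mu$-wide complete type over $\bar a$), the Proposition yields that $p(\bar x,\bar a) \cup p(\bar x,\bar a^*)$ is $\mu$-wide, hence consistent. Any realization $\bar c_1 \in \C$ satisfies $\bar c_1\bar a \equiv \bar c_1\bar a^*$, and since $\bar c_1$ enumerates an elementary submodel $M_1 \prec \C$, we conclude $\bar a \equiv_{M_1} \bar a^*$, so $d_L(\bar a,\bar a^*) \leq 1$. Symmetrically, applying the Proposition to $(\bar a^*, \bar b)$ gives $d_L(\bar a^*,\bar b) \leq 1$, whence $d_L(\bar a,\bar b) \leq 2$. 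The main delicate points are the compactness/extension argument lifting definable amenability from finitary formulas to the infinite-variable type $\tp(\bar m/\emptyset)$, and the classical realizability in $\C$ of the continuous-logic type $p_0|M$; both are standard given that $\C$ is saturated enough as a model of $T_{CL}$.
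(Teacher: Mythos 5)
Your overall strategy is the same as the paper's: interpose a tuple $\bar a^*$ with the same KP-strong type that is stably independent from both $\bar a$ and $\bar b$, and then use wide-type amalgamation (Proposition \ref{proposition: amalgamation of wide types}) to bring each of the two stably independent pairs into a common elementary submodel, yielding Lascar distance $1$ for each pair and hence $2$ overall. However, there is a genuine gap in the preliminary step where you try to upgrade definable amenability from single formulas to a single $\emptyset$-definable global Keisler measure $\mu_{\bar x}$ concentrated on the infinite type $\tp(\bar m/\emptyset)$. The condition you write, ``$\bar b \equiv \bar b' \Rightarrow \mu(\varphi(\bar x,\bar b)) = \mu(\varphi(\bar x,\bar b'))$,'' is $\aut(\C)$-\emph{invariance} of $\mu$, not $\emptyset$-\emph{definability}. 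These are very different in general (they coincide for NIP theories, but not in the generality required here). Invariance is indeed a closed condition and would survive the net limit, but definability is not: $\emptyset$-definability requires the map $\tp(\bar b/\emptyset) \mapsto \mu(\varphi(\bar x,\bar b))$ to be \emph{continuous} on $S_{\bar y}(\emptyset)$, and pointwise limits of nets of continuous $[0,1]$-valued functions on a compact space need not be continuous. So the limiting measure you obtain need not be $\emptyset$-definable, and you cannot then invoke Proposition \ref{proposition: amalgamation of wide types}, whose proof uses definability crucially (to get a CL-formula $\psi(\bar y,\bar z) = \mu(\phi(\bar x,\bar y)\wedge\phi(\bar x,\bar z))$, whose stability via Proposition 2.25 of \cite{Hrushovski-approximate} is the engine of the argument).

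The paper sidesteps this entirely by running the compactness reduction in the opposite direction. To show $d_L(\bar a,\bar a^*) \leq 1$ it suffices, by compactness, to find for each \emph{single} consistent formula $\phi(\bar y)$ in finitely many variables some $\bar m \models \phi$ with $\tp(\bar a/\bar m) = \tp(\bar a^*/\bar m)$ (and one may then also assume $\bar a,\bar a^*$ finite, since only finitely many coordinates appear in any formula). For such a $\phi(\bar y)$, definable amenability hands you a $\emptyset$-definable $\mu_{\bar y}$ concentrated on $\phi$ directly by definition; no infinite-variable measure is needed. So your outline is right, but the correct move is to apply compactness to avoid needing a definable measure in infinitely many variables, rather than trying (unsuccessfully) to manufacture one by a limit argument.
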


\begin{proof} We have to show that if  ${\bar a}$, ${\bar b}$ are tuples of the same (but possibly infinite) length and with the same type over $\bdd^{heq}(\emptyset)$, then they  have the same Lascar strong type; more precisely, the Lascar distance between them is $\leq 2$.

Observe that, by compactness, without loss of generality, we can and do assume that $\bar a$ and $\bar b$ are finite tuples (this is because if we show that all corresponding finite subtuples of $\bar a$ and $\bar b$ are at Lascar distance at most 2, then so are $\bar a$ and $\bar b$).

Assume first that ${\bar a}$ and ${\bar b}$ are stably independent in the sense of Definition \ref{definition: stable independence}.  
Fix a model $M_{0}$ and enumerate it. We will find a copy $M$ of $M_{0}$ such that $\tp({\bar a}/M) = \tp({\bar b}/M)$ (which immediately yields that ${\bar a}$ and ${\bar b}$ have the same Lascar strong type; in fact, $d_L(\bar a, \bar b) \leq 1$).  
By compactness, given a consistent formula $\phi({\bar y})$ in finitely many variables, it suffices to find some realization ${\bar m}$ of $\phi({\bar y})$ such that $\tp({\bar a}/{\bar m}) = \tp ({\bar b}/{\bar m})$. 
By assumption, let $\mu_{\bar y}$ be a  $\emptyset$-definable, global Keisler measure concentrating on $\phi({\bar y})$. 
Let $p({\bar y},{\bar a})$ be a complete type over ${\bar a}$ which is $\mu$-wide. By Proposition \ref{proposition: amalgamation of wide types}, $p({\bar y},{\bar a})\cup p({\bar y},{\bar b})$ is also $\mu$-wide, in particular consistent. So let ${\bar m}$ realize it. 

In general, given finite tuples ${\bar a}$, ${\bar b}$ with the same type over $\bdd^{heq}(\emptyset)$, let ${\bar d}$ have the same type over $\bdd^{heq}(\emptyset)$ and be stably independent from $\{{\bar a}, {\bar b}\}$ (by  Remark \ref{remark: types over bdd coincide with types over CL-acl} and Fact \ref{fact: stationarity of types over CL-acl}). 
By what we have just shown,  $d_L({\bar a},{\bar d}) \leq 1$ and $d_L({\bar b},{\bar d}) \leq 1$. So $d_L({\bar a},{\bar b}) \leq 2$. 
\end{proof}

\section{Amenability implies $G$-compactness: the general case}\label{Subsection 4.2}

Let $T$ be an arbitrary theory, $\C \models T$ a monster model, and $\bar c$ an enumeration of $\C$.
The goal of this section is to prove Theorem \ref{The main theorem}; in fact, we will get more precise information:

\begin{thm}\label{theorem: relative definable amenability implies G-compactness}
If $T$ is amenable, then $T$ is $G$-compact. In fact, the diameter of each Lascar strong type (over $\emptyset$) is bounded by $4$.
\end{thm}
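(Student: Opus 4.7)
The plan is to transpose the Massicot--Wagner stabilizer theorem (in the version of Corollary~2.12 of \cite{HKP1}) from definable groups to the setting of $\aut(\C)$ equipped with its Boolean algebra of relatively type-definable subsets; the diameter bound of $4$ will correspond to a four-fold product $X X^{-1} X X^{-1}$, where $X$ is a set of automorphisms each moving a fixed tuple at most Lascar distance $1$. By Proposition~\ref{remark: relative definable amenability}, amenability of $T$ yields an $\aut(\C)$-invariant Borel probability measure $\tilde\mu$ on $S_{\bar c}(\C)$, and by Corollary~\ref{corollary: measure extends to relatively type-definable sets} this induces an $\aut(\C)$-invariant, finitely additive probability measure $\mu$ on the Boolean algebra $\mathcal{B}$ generated by relatively type-definable subsets of $\aut(\C)$. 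Fix tuples $\bar a, \bar b$ with $\bar a\,E_{KP}\,\bar b$; we want $d_L(\bar a,\bar b)\le 4$. Fix a small model $M_0\prec\C$ and set
$$X := \{\sigma\in\aut(\C) : \sigma(\bar a)\equiv_{M_0}\bar a\},$$
which is relatively type-definable. Any $\sigma\in X$ satisfies $d_L(\sigma(\bar a),\bar a)\le 1$, and using $\aut(\C)$-invariance of $d_L$, a telescoping computation shows that for any $\sigma_1,\sigma_2,\sigma_3,\sigma_4\in X$ one has $d_L\bigl(\sigma_1\sigma_2^{-1}\sigma_3\sigma_4^{-1}(\bar a),\,\bar a\bigr)\le 4$.

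Next, I would apply the stabilizer variant of \cite{HKP1} in the ambient group $\aut(\C)$ with $\mathcal{B}$ and $\mu$, to the set $X$ (after a preliminary symmetrization if necessary), so as to produce a relatively type-definable subgroup $H\le\aut(\C)$ of bounded index with $H\subseteq X X^{-1} X X^{-1}$. A standard general fact would then finish the proof: any bounded-index, relatively type-definable subgroup must contain $\autf_{KP}(\C)$, since the partition of $\aut(\C)$ into cosets of $H$, pushed forward via the action on $\bar c$, yields a bounded, $\emptyset$-type-definable equivalence relation on the $\aut(\C)$-orbit of $\bar c$, which is therefore refined by $E_{KP}$, forcing $\autf_{KP}(\C)\subseteq H$. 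Since $\bar a\,E_{KP}\,\bar b$, we can pick $\sigma\in\autf_{KP}(\C)$ with $\sigma(\bar a)=\bar b$; then $\sigma\in H\subseteq X X^{-1} X X^{-1}$, and the distance computation above gives $d_L(\bar b,\bar a)\le 4$, as required.

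The main obstacle is the stabilizer step. In the classical Massicot--Wagner setting one starts with a genuinely wide, positive-measure symmetric set, whereas here the natural choice of $X$ will typically have $\mu$-measure zero --- it is a single fiber over one type in $S_{\bar a}(M_0)$ among potentially unboundedly many, and the finitely additive $\mu$ need not put positive mass on any individual fiber. The substantive technical work is therefore to adapt the stabilizer machinery to the Boolean algebra of relatively type-definable subsets of $\aut(\C)$ and to identify the correct notion of ``wideness'' of $X$ under which the argument still produces a relatively type-definable, bounded-index subgroup inside $X X^{-1} X X^{-1}$. This is complicated precisely by the point flagged in the introduction, that the stabilizers produced by \cite{MaWa} are not a priori quantifier-free definable, so one cannot simply plug the automorphism group into \cite{MaWa} as a black box; instead one has to carry out the relevant computations internally, along the lines of Section~2 of \cite{HKP1} on relatively definable subsets and finitely additive invariant measures on them.
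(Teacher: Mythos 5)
Your high-level strategy matches the paper's: transpose the stabilizer theorem of \cite{HKP1} to $\aut(\C)$ with its Boolean algebra of relatively type-definable sets, produce a containment of $\autf_{KP}(\C)$ in a four-fold product, and finish with a telescoping Lascar-distance computation. You also correctly diagnose the central obstacle --- the natural choice $X=\{\sigma:\sigma(\bar a)\equiv_{M_0}\bar a\}$ is a single fiber of the restriction map to $S_{\bar a}(M_0)$ and will generically have $\mu$-measure zero --- but you stop at naming the obstacle rather than resolving it, and the resolution is precisely the substantive content of the paper's argument.

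The missing idea is a reordering of quantifiers. Rather than fixing $\bar a$ and $M_0$ first, the paper first picks a global $\mu$-wide type $q\in S_p(\C)$ (every formula in $q$ has positive $\tilde\mu$-measure; this exists by regularity of $\tilde\mu$), then chooses a small model $M\prec\C$ and an automorphism so that $\bar\alpha\models q|_M$. The analogue of your $X$ is then $A_{q|_{\bar\alpha},\bar\alpha}=\{\sigma:\sigma(\bar\alpha)\models q|_{\bar\alpha}\}$. This set is an intersection of \emph{relatively definable} pieces $A_{\varphi,\bar\alpha}$ for $\varphi(\bar x,\bar\alpha)\in q|_{\bar\alpha}$, and --- this is the point --- each such piece has positive $\mu$-measure because $\varphi\in q$ and $q$ is $\mu$-wide. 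One applies the stabilizer theorem not to the type-definable set itself but to each formula-level piece $A_{\varphi,\bar\alpha}$, obtaining $\autf_{KP}(\C)\subseteq A_{\varphi,\bar\alpha}A_{\varphi,\bar\alpha}A_{\varphi,\bar\alpha}^{-1}A_{\varphi,\bar\alpha}^{-1}$ (this is Corollary \ref{corollary: Autf-KP contained in A4}, whose proof recursively builds a descending chain of generic symmetric relatively definable sets via Lemma \ref{lemma:  Y8 contained in A'}, and invokes Proposition \ref{proposition: bounded index implies containment of Autf-KP}). One then passes to the intersection over all $\varphi\in q|_{\bar\alpha}$ by a compactness lemma for products of relatively type-definable sets (Lemma \ref{lemma: relative type-definability is preserved under products} / Corollary \ref{corollary: compactness for relatively definable subsets}(i)), which shows that the intersection of the formula-level quadruples equals the quadruple $A_{q|_{\bar\alpha},\bar\alpha}A_{q|_{\bar\alpha},\bar\alpha}A_{q|_{\bar\alpha},\bar\alpha}^{-1}A_{q|_{\bar\alpha},\bar\alpha}^{-1}$ of the type-level set. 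The Lascar bound of $4$ then follows because $\bar\alpha\models q|_M$: a realization $\bar\beta$ of $q|_{\bar\alpha}$ satisfies $d_L(\bar\beta,\bar\alpha)\le 1$, since $\bar\gamma\models q|_{M\bar\alpha}$ is simultaneously $\equiv_M\bar\alpha$ and $\equiv_{\bar\alpha}\bar\beta$ (this is the Claim in Lemma \ref{lemma: the intersection contained in Autf-L}). Your $X$, defined over an arbitrary model $M_0$ with no relation to the measure, has the Lascar-distance property but not the wideness; the paper's $A_{q|_{\bar\alpha},\bar\alpha}$ has both, because $q$ and $\bar\alpha$ are coordinated. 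Two smaller points: the paper does not actually produce a bounded-index subgroup $H$ contained in the type-level quadruple, only in any relatively definable overset $A'\supseteq AAA^{-1}A^{-1}$; and your ``standard general fact'' about bounded-index relatively type-definable subgroups containing $\autf_{KP}(\C)$ requires first passing to the normal core (the coset equivalence relation is $\emptyset$-invariant only after normalizing), as carried out in Proposition \ref{proposition: bounded index implies containment of Autf-KP}.
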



Before we start our analysis towards the proof of Theorem \ref{theorem: relative definable amenability implies G-compactness}, let us first note the analogous statement for extreme amenability, which is much easier to prove.

\begin{prop}\label{proposition: extremely amenable local}
If $p(\bar x) \in S(\emptyset)$ is extremely amenable, then $p(\bar x)$ is a single Lascar strong type. Moreover, the Lascar diameter of $p(\bar x)$ is at most $2$.

In particular, if $T$ is extremely amenable, then the Lascar strong types coincide with complete types (over $\emptyset$), i.e. the Lascar Galois group $\gal_L(T)$ is trivial. Furthermore, if $T$ is extremely amenable, the Lascar distance between any two elements which have the same type is at most 1.
\end{prop}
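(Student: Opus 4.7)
My plan is to use extreme amenability of $p$ to extract an $\aut(\C)$-invariant global type $q\in S_p(\C)$, and then to construct a length-at-most-$2$ Lascar chain between any two $\bar a,\bar b\models p$ by exploiting the invariance of $q$. The key invariance computation runs as follows: fix a small model $M$ and realize $\bar d\in\C$ with $\bar d\models q|M\bar a\bar b$ (possible by $\kappa$-saturation). I claim that if $\tp(\bar a/M)=\tp(\bar b/M)$, then $\bar a\equiv_{M\bar d}\bar b$. Indeed, picking $\sigma\in\aut(\C/M)$ with $\sigma(\bar a)=\bar b$, $\aut(\C)$-invariance of $q$ yields $\sigma(q|M\bar a)=q|M\bar b$; since $\bar d\models q|M\bar a$, we obtain $\sigma(\bar d)\models q|M\bar b=\tp(\bar d/M\bar b)$, hence $\sigma(\bar d)\equiv_{M\bar b}\bar d$, and composing $\sigma$ with a $\tau\in\aut(\C/M\bar b)$ mapping $\sigma(\bar d)$ back to $\bar d$ produces an element of $\aut(\C/M\bar d)$ sending $\bar a$ to $\bar b$.

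To produce the two-step Lascar chain between arbitrary $\bar a,\bar b\models p$, I would interpose a generic intermediate tuple $\bar c$ realizing $q$ over suitably enlarged parameters and apply the key computation twice: once to obtain $\bar a\equiv_{N_0}\bar c$ for one small model $N_0$, and once to obtain $\bar c\equiv_{N_1}\bar b$ for a second small model $N_1$, the two choices being arranged so that both invocations of the key step are compatible. Gluing yields $d_L(\bar a,\bar b)\leq 2$, and in particular $p$ is a single Lascar strong type.

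The main obstacle is precisely the second step: converting the equivalence $\bar a\equiv_{\bar d}\bar b$ over the mere parameter set $\bar d$ into equivalence over an honest small elementary submodel. Because extreme amenability of $p$ alone furnishes only the invariant extension $q$ of $p$ itself --- and not, for example, invariant extensions of richer types encoding entire small submodels --- the tuple $\bar d$ carries no built-in model structure, and bridging it to a genuine small model costs an extra Lascar step; this is what forces the bound to be $2$ rather than $1$.

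The `in particular' clause follows at once. If $T$ is extremely amenable, the first part applies to every $p\in S(\emptyset)$, so every complete type over $\emptyset$ forms a single Lascar strong type on every product of sorts. Consequently $E_L$ coincides with equality of type over $\emptyset$; since each $\sigma\in\aut(\C)$ preserves every $\emptyset$-type setwise, every $\sigma$ fixes every $E_L$-class setwise, so $\aut(\C)=\autf_L(\C)$ and therefore $\gal_L(T)$ is trivial.
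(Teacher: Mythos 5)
Your opening move is right --- extract an $\aut(\C)$-invariant $q\in S_p(\C)$ --- and your high-level outline (interpose a realization of $q$ as a hub and chain in two steps) is the correct shape. But the ``key invariance computation'' you prove, while itself correct, is not the right tool, and you have not closed the gap that you yourself flag at the end. Your lemma shows $\bar a\equiv_{M\bar d}\bar b$ under the hypothesis $\tp(\bar a/M)=\tp(\bar b/M)$; that hypothesis already gives $d_L(\bar a,\bar b)\leq 1$, so the lemma's conclusion over the parameter set $M\bar d$ (which need not be a model, and over which equality of types is not a Lascar step) moves you sideways rather than forward. Worse, when you try to apply it ``twice'' to get $\bar a\equiv_{N_0}\bar c$ and $\bar c\equiv_{N_1}\bar b$, the roles don't match: the lemma outputs $\bar a\equiv_{M\bar d}\bar b$ with $\bar d$ in the auxiliary slot, whereas in your outline the auxiliary $\bar c$ is one of the two endpoints being compared. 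You then explicitly admit you cannot turn the parameter set $\bar d$ into a model, and you do not show that this ``costs exactly one extra step.'' That is a genuine gap, not a cosmetic one.

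The correct move --- and the one the paper makes --- avoids any such conversion. Fix a small model $M\prec\C$ and $\bar\beta\in\C$ with $\bar\beta\models q|_M$. The crucial observation is that \emph{every} realization $\bar\gamma\models p$ in $\C$ realizes $q|_N$ for \emph{some} small model $N\prec\C$: write $\bar\gamma=\sigma(\bar\beta)$ for some $\sigma\in\aut(\C)$ and take $N:=\sigma[M]$; then invariance of $q$ gives $\bar\gamma\models\sigma(q)|_{\sigma[M]}=q|_N$. Now fix a single $\bar\alpha\models q$ in a larger monster (or, equivalently, take $\bar c\in\C$ with $\bar c\models q|_{N_0\cup N_1}$ for the two relevant models). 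For each $\bar\gamma\models p$, since both $\bar\gamma$ and $\bar\alpha$ realize $q|_N$, they agree over the \emph{model} $N$, so $d_L(\bar\gamma,\bar\alpha)\leq 1$. Applying this to both $\bar a$ and $\bar b$ and concatenating gives $d_L(\bar a,\bar b)\leq 2$. Note that no hypothesis of the form $\tp(\bar a/M)=\tp(\bar b/M)$ is ever needed, and no conversion from parameter set to model occurs; your lemma simply doesn't enter. Your treatment of the ``in particular'' clause is fine.
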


\begin{proof}
Choose $\C$ so that $\bar x$ is short in $\C$. Let $q \in S_p(\C)$ be invariant under $\aut(\C)$. Fix $\bar \alpha \models q$ (in a bigger model). Take a small $M \prec \C$ and choose $\bar \beta \in \C$ such that $\bar \beta \models q|_M$. Then $\bar \alpha \, E_L\, \bar \beta$. But also, for any $\sigma \in \aut(\C)$, $\sigma(\bar \beta) \models \sigma(q)|_{\sigma[M]} = q|_{\sigma[M]}$, and so $\sigma(\bar \beta) \, E_L \, \bar \alpha$. Therefore,  $\sigma(\bar \beta) \, E_L\,  \bar \beta$ for any $\sigma \in \aut(\C)$, which shows that $p(\bar x)$ is a single Lascar strong type.

For the ``moreover part'' notice that, in the above argument, both $d_L(\bar \alpha, \bar \beta)$ and $d_L(\sigma(\bar \beta), \bar \alpha)$ are bounded by $1$.

The ``in particular part'' follows immediately from the first part, but we also give a shorter proof suggested by the referee which yields additionally Lascar distance at most 1. Consider any $\bar a \equiv \bar b$. By assumption, there exists a model $M$ enumerated as $\bar m$ such that $\tp(\bar m/\C)$ is invariant. Then $\bar a \equiv_M \bar b$, so $d_L(\bar a,\bar b) \leq 1$. 
\end{proof}

Recall from Definition \ref{definition: relatively type-definable sets} that by a {\em relatively type-definable subset} of $\aut(\C)$ we mean a subset of the form $$A_{\pi,\bar a,\bar b}:= \{ \sigma \in \aut(\C) : \C \models \pi(\sigma(\bar a), \bar b))\}$$ 
for some partial type $\pi(\bar x, \bar y)$ (without parameters), where $\bar x$ and $\bar y$ are short tuples of variables and $\bar a$, $\bar b$ are from $\C$.  Without loss $\bar x$ is of the same length as $\bar y$ and $\bar a =\bar b$, and then we write $A_{\pi, \bar a}$. In fact, the following remark is very easy.

\begin{rem}
For any partial types $\pi_1(\bar x_1, \bar y_1)$ and $\pi_2(\bar x_2,\bar y_2)$ and tuples $\bar a_1, \bar a_2, \bar b_1, \bar b_2$ in $\C$ corresponding to $\bar x_1,\bar x_2,\bar y_1,\bar y_2$, one can find partial types $ \pi_1'(\bar x, \bar y)$ and $\pi_2'(\bar x, \bar y)$ with $\bar x$ of the same length (by which we also mean of the same sorts) as $\bar y$ and a tuple $\bar a$ in $\C$ corresponding to $\bar x$ such that $A_{\pi_1,\bar a_1,\bar b_1}=A_{\pi_1',\bar a}$ and $A_{ \pi_2,\bar a_2,\bar b_2}=A_{ \pi_2',\bar a}$.
\end{rem}

For a short tuple $\bar \alpha$ and a short tuple of parameters $\bar b$, a subset of $\aut(\C)$ is called {\em relatively $\bar \alpha$-type-definable over $\bar b$} if it is of the form $A_{\pi,\bar \alpha,\bar b}$ for some partial type $\pi(\bar x, \bar y)$.

The next fact was observed in \cite{KrNeSi}.

\begin{fct}[Proposition 5.2 of \cite{KrNeSi}]\label{fact: boundedness of the index implies AutfL(C) is contained}
If $G$ is a closed, bounded index subgroup of $\aut(\C)$ (with $\aut(\C)$ equipped with the pointwise convergence topology), then $\autf_L(\C) \leq G$.
\end{fct}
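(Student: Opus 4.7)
The plan is to prove the stronger statement that the normal core $N := \bigcap_{\tau \in \aut(\C)} \tau G \tau^{-1}$ of $G$ already contains $\autf_L(\C)$. First I would collect three features of $N$. Being the intersection of conjugates of the closed set $G$, and since conjugation by any $\tau$ is a self-homeomorphism of $\aut(\C)$ in the pointwise convergence topology, $N$ is closed. It is normal by construction. Finally, $N$ has bounded index, because it is precisely the kernel of the left translation action of $\aut(\C)$ on $\aut(\C)/G$; since the coset space has cardinality $<\kappa$ and $\kappa$ is taken to be strong limit, $|\mathrm{Sym}(\aut(\C)/G)|<\kappa$, so $[\aut(\C):N]<\kappa$.

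Next, I would use $N$ to manufacture a bounded, $\aut(\C)$-invariant equivalence relation on every product $S$ of boundedly many sorts by declaring $\bar a \, E_N \, \bar b$ if and only if $\bar b \in N\cdot\bar a$. Normality of $N$ gives $\aut(\C)$-invariance, and $[\aut(\C):N]<\kappa$ bounds the number of $E_N$-classes on each $\aut(\C)$-orbit of $S$, hence (since orbits correspond to types over $\emptyset$, of which there are boundedly many) the total number of classes. By the characterization of $E_L$ recalled in Section~\ref{section: preliminaries on G-compactness} as the finest bounded $\aut(\C)$-invariant equivalence relation on $S$, $E_L$ then refines $E_N$ on every such $S$.

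To close, take $\sigma \in \autf_L(\C)$. By the characterization (recalled after Definition~\ref{definition: Gal_L}) that elements of $\autf_L(\C)$ fix all $E_L$-classes setwise on every product of sorts, we have $\sigma(\bar a)\,E_L\,\bar a$ for every finite tuple $\bar a$ from $\C$, hence $\sigma(\bar a)\,E_N\,\bar a$, i.e.\ there is $\nu_{\bar a}\in N$ with $\sigma(\bar a)=\nu_{\bar a}(\bar a)$. Equivalently, $\sigma \in N\cdot \Stab(\bar a)$ for every finite $\bar a$, so $\sigma$ lies in the pointwise closure of $N$; closedness of $N$ then forces $\sigma \in N \subseteq G$.

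The main technical point is the bounded-index verification for $N$, which is exactly where the strong-limit convention on $\kappa$ is used; once that is in hand, the rest is a routine consequence of the characterization of $E_L$ as the finest bounded invariant equivalence relation, together with the fact that closed normal bounded-index subgroups are pointwise-closed.
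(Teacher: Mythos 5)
The paper cites this Fact from \cite{KrNeSi} and does not reprove it, so there is no in-paper proof to check you against directly; what it does contain is the analogous Proposition~\ref{proposition: bounded index implies containment of Autf-KP} for $\autf_{KP}(\C)$ (with ``closed'' replaced by ``relatively type-definable''), proved by exactly your strategy. Your argument is correct: the normal core $N$ is closed and normal, the index bound $[\aut(\C):N]\le 2^{[\aut(\C):G]}<\kappa$ holds under the strong-limit convention on $\kappa$, the $N$-orbit relation on each product of sorts is a bounded $\aut(\C)$-invariant equivalence relation and hence is refined by $E_L$, and then $\sigma\in\autf_L(\C)$ gives $\sigma\in N\cdot\Fix(\bar a)$ for every finite $\bar a$, which together with $N=\overline{N}=\bigcap_{\bar a}N\cdot\Fix(\bar a)$ forces $\sigma\in N\subseteq G$. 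The only place you diverge in technique from the paper's KP argument is the closing step: there the authors fix a single short tuple $\bar a'$ and conclude from $G'\cdot\Fix(\bar a')=G'$, which they get from relative type-definability of $G'$ over $\bar a'$; you instead quantify over all finite tuples and appeal to pointwise-closedness of $N$, which is the natural substitute when the hypothesis is topological rather than type-definable. Both routes are sound, and yours is the one adapted to the actual hypotheses of the Fact.
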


Using an argument similar to the proof  of Fact \ref{fact: boundedness of the index implies AutfL(C) is contained}, we will first show

\begin{prop}\label{proposition: bounded index implies containment of Autf-KP}
If $G$ is a relatively type-definable, bounded index subgroup of $\aut(\C)$, then $\autf_{KP}(\C) \leq G$.
\end{prop}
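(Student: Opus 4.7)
The plan is to pull back the coset equivalence relation of $G$ to a bounded, type-definable over $\emptyset$ equivalence relation on the set of realizations of a single complete type, and then invoke the universality of $E_{KP}$. By the normalization stated at the start of this section, I may assume $G = A_{\pi,\bar a} = \{\sigma\in\aut(\C) : \C\models\pi(\sigma(\bar a),\bar a)\}$ for some partial type $\pi(\bar x,\bar y)$ over $\emptyset$ (with $\bar x$ and $\bar y$ short of the same length) and some short tuple $\bar a$ from $\C$. The key observation is that for all $\sigma,\tau\in\aut(\C)$,
\[
\sigma G = \tau G \iff \tau^{-1}\sigma \in G \iff \C\models\pi(\tau^{-1}\sigma(\bar a),\bar a) \iff \C\models\pi(\sigma(\bar a),\tau(\bar a)),
\]
where the last equivalence follows by applying the automorphism $\tau$, which preserves satisfaction of the parameter-free partial type $\pi$.

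Motivated by this, I would introduce the relation $R$ on $p(\C)$, where $p:=\tp(\bar a/\emptyset)$, by declaring $R(\bar a_1,\bar a_2)$ iff $\C\models\pi(\bar a_1,\bar a_2)$. Type-definability of $R$ over $\emptyset$ is immediate from the definition. The equivalence-relation properties of $R$ follow from the subgroup axioms for $G$ via the key observation above: for example, symmetry of $R$ transfers from symmetry of the coset equality $\sigma G = \tau G$, and transitivity likewise. Boundedness follows from the bounded-index hypothesis: the map $\aut(\C) \to p(\C)/R$ sending $\sigma$ to $[\sigma(\bar a)]_R$ is surjective (by strong homogeneity of $\C$) with fibers equal to the left cosets of $G$, so $|p(\C)/R| = [\aut(\C):G]$ is bounded. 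Hence $R$ is a bounded, type-definable over $\emptyset$ equivalence relation on $p(\C)$.

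By the characterization of $E_{KP}$ recorded after Definition~\ref{definition: Gal_KP}, its restriction to $p(\C)$ is the finest bounded, type-definable over $\emptyset$ equivalence relation on $p(\C)$, and is therefore contained in $R$. Consequently, for any $\sigma \in \autf_{KP}(\C)$ we have $\sigma(\bar a)\,E_{KP}\,\bar a$, hence $R(\sigma(\bar a),\bar a)$, i.e.\ $\C\models\pi(\sigma(\bar a),\bar a)$, which means $\sigma \in G$. The only subtle point I expect to verify carefully is that evaluating $\pi$ on a pair from $p(\C)$ genuinely captures the coset relation; this hinges entirely on the parameter-freeness of $\pi$, without which automorphism-invariance of the satisfaction predicate would fail. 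Aside from this, the argument is a direct type-definable analogue of Fact~\ref{fact: boundedness of the index implies AutfL(C) is contained}, which accordingly upgrades the conclusion from $\autf_L(\C)$ to $\autf_{KP}(\C)$.
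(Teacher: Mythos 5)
Your proof is correct, and it takes a noticeably cleaner route than the paper's. The paper first passes to the normal core $G' = \bigcap_{\sigma\in\aut(\C)} G^\sigma$ (still relatively type-definable, over a longer tuple $\bar a'$) and then considers the orbit equivalence relation $E$ of $G'$ on the realizations of $\tp(\bar a'/\emptyset)$; type-definability of $E$ is only over $\bar a'$ a priori, and the paper has to invoke normality of $G'$ to get invariance, hence type-definability over $\emptyset$. You instead observe that since $G = A_{\pi,\bar a}$ with $\pi$ parameter-free, the relation $R(\bar a_1,\bar a_2) :\iff \C\models\pi(\bar a_1,\bar a_2)$ on $p(\C)$ (for $p = \tp(\bar a/\emptyset)$) is \emph{directly} type-definable over $\emptyset$, so no normalization or invariance argument is needed. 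The identification $R(\sigma(\bar a),\tau(\bar a)) \iff \sigma G = \tau G$ (using $\Fix(\bar a)\leq G$, which holds because $\id\in G$ gives $\C\models\pi(\bar a,\bar a)$) then packages the equivalence-relation axioms and the bound $|p(\C)/R| = [\aut(\C):G]$ in one stroke. The remaining step — $E_{KP}\restriction p(\C)$ refines $R$, hence $\sigma\in\autf_{KP}(\C)$ implies $R(\sigma(\bar a),\bar a)$, i.e.\ $\sigma\in G$ — matches the paper's final step. What your approach buys is the elimination of the normal-core detour and the longer tuple $\bar a'$; what the paper's approach buys is an explicit presentation of the relation as an orbit equivalence relation of a (normal, type-definable) subgroup, which fits a more standard template. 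Both are valid proofs.
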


\begin{proof}
Let $\sigma_i$, $i<\lambda$, be a set of representatives of the left cosets of $G$ in $\aut(\C)$ (so $\lambda$ is bounded). Then 
$$G':= \bigcap_{\sigma \in \aut(\C)} G^\sigma = \bigcap_{i<\lambda} G^{\sigma_i}$$
is a normal, bounded index subgroup  of $\aut(\C)$ (where $G^\sigma :=\sigma G \sigma^{-1}$).

Let us show now that $G'$ is relatively type-definable. We have $G = A_{\pi,\bar a}=\{ \sigma \in \aut(\C) : \C \models \pi(\sigma(\bar a), \bar a)\}$ for some type $\pi(\bar x, \bar y)$ (with short $\bar x$, $\bar y$) and tuple $\bar a$ in $\C$. Then $G^{\sigma_i}= \{\sigma \in \aut(\C) : \C \models \pi(\sigma(\sigma_i(\bar a)), \sigma_i(\bar a))\}$, so putting $\bar a '= \langle \sigma_i (\bar a) \rangle_{i <\lambda}$, $\bar x' = \langle \bar x_i \rangle_{i < \lambda}$,  $\bar y' = \langle \bar y_i \rangle_{i <\lambda}$ (where $\bar x_i$ and $\bar y_i$ are copies of $\bar x$ and $\bar y$, respectively) and $\pi'(\bar x',\bar y') = \bigcup_{i<\lambda} \pi(\bar x_i,\bar y_i)$ (as a set of formulas), we see that
$$(*)\;\;\;\;\;\;\; G' =  A_{\pi',\bar a'}= \{\sigma \in \aut(\C) : \C \models \pi'(\sigma(\bar a'), \bar a')\},$$
which is clearly relatively type-definable.

The orbit equivalence relation $E$ of the action of $G'$ on the set of realizations of $\tp(\bar a'/\emptyset)$ is a bounded equivalence relation. This relation is type-definable, because 
$$\bar \alpha \, E\,  \bar \beta \iff (\exists g \in G') (g (\bar \alpha) =\bar \beta) \iff (\exists \bar b')( \pi'(\bar b', \bar a') \wedge \bar a' \bar \alpha \equiv \bar b' \bar \beta).$$
But $E$ is also invariant (as $G'$ is a normal subgroup of $\aut(\C)$), so $E$ is type-definable over $\emptyset$. Therefore, $E$ is refined by $E_{KP}$.

Now, take any $\sigma \in \autf_{KP}(\C)$. By the last conclusion, there is $\tau \in G'$ such that $\sigma(\bar a')=\tau(\bar a')$. Then $\tau^{-1}\sigma(\bar a')=\bar a'$ and $\sigma = \tau (\tau^{-1}\sigma)$. 
Since $(*)$ implies that $G'\cdot \Fix(\bar a')= G'$, we get $\sigma \in G'$. Thus, $\autf_{KP}(\C) \leq G' \leq G$.
\end{proof}

Recall that a subset $C$ of a group is called {\em (left) generic} if finitely many left translates of it covers the whole group; $C$ is called {\em symmetric} if it contains the neutral element and $C^{-1}=C$.

\begin{cor}\label{corollary: Autf-KP contained in the intersection}
If $\{C_i : i \in \omega\}$ is a family of relatively definable, generic, symmetric subsets of $\aut(\C)$ such that $C_{i+1}^2 \subseteq C_i$ for all $i \in \omega$, then $\bigcap_{i \in \omega} C_i$ is a subgroup of $\aut(\C)$  containing $\autf_{KP}(\C)$.  
\end{cor}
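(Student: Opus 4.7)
The plan is to reduce to Proposition \ref{proposition: bounded index implies containment of Autf-KP}: it suffices to check that $G := \bigcap_{i \in \omega} C_i$ is a relatively type-definable subgroup of $\aut(\C)$ of bounded index, and that proposition then delivers $\autf_{KP}(\C) \leq G$. So I will verify three things: that $G$ is a subgroup, that $G$ is relatively type-definable, and that $[\aut(\C) : G]$ is bounded. The first two are bookkeeping; the third is the substantive step.

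For the subgroup property, symmetry of each $C_i$ gives $e \in G$ and closure under inverses, while for $g,h \in G$ one has $g,h \in C_{i+1}$, so $gh \in C_{i+1}^2 \subseteq C_i$ for every $i$, whence $gh \in G$. For type-definability, each $C_i$ equals $A_{\varphi_i,\bar a_i,\bar b_i}$ for a formula $\varphi_i$ and \emph{finite} tuples $\bar a_i, \bar b_i$ from $\C$; concatenating all the $\bar a_i$'s (and $\bar b_i$'s, renamed as subtuples of $\bar c$) produces a countable, hence short, tuple $\bar a$ in $\C$, and $G = A_{\pi,\bar a}$ for the partial type $\pi$ obtained as the union of the $\varphi_i$'s acting on the corresponding blocks of variables. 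Thus $G$ is relatively type-definable in the sense of Definition \ref{definition: relatively type-definable sets}.

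The heart of the argument is bounded index. Here I intend to use genericity of each $C_i$ together with the nesting $C_{i+1}^2 \subseteq C_i$ via a standard pigeonhole/cardinality argument. By genericity, fix for each $i$ finitely many $\tau_{i,1},\dots,\tau_{i,k_i} \in \aut(\C)$ with $\aut(\C) = \bigcup_{j \leq k_i}\tau_{i,j}C_i$. Assign to each $\sigma \in \aut(\C)$ its itinerary $(j_i(\sigma))_{i \in \omega} \in \prod_i \{1,\dots,k_i\}$, where $\sigma \in \tau_{i,j_i(\sigma)}C_i$. There are at most $2^{\aleph_0}$ distinct itineraries. If $\sigma$ and $\sigma'$ share an itinerary, then for every $i$, $\sigma,\sigma' \in \tau_{i,j_i(\sigma)}C_i$, so by symmetry $\sigma^{-1}\sigma' \in C_i^{-1}C_i = C_i^{2} \subseteq C_{i-1}$ whenever $i \geq 1$; intersecting over $i$ gives $\sigma^{-1}\sigma' \in G$. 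Hence $[\aut(\C):G] \leq 2^{\aleph_0}$, which is bounded for any sufficiently saturated $\C$. This is the step I expect to be the main obstacle, since the combination of genericity and the telescoping condition $C_{i+1}^2 \subseteq C_i$ is exactly what makes the coarsening of covers compatible with membership in $G$.

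With (a), (b), and (c) in hand, Proposition \ref{proposition: bounded index implies containment of Autf-KP} applies to $G$ and yields $\autf_{KP}(\C) \leq G = \bigcap_{i \in \omega} C_i$, completing the proof.
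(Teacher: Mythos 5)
Your proposal is correct and takes essentially the same approach as the paper: verify that $G=\bigcap_{i\in\omega}C_i$ is a relatively type-definable subgroup of bounded index and then invoke Proposition~\ref{proposition: bounded index implies containment of Autf-KP}. The paper merely asserts the bounded-index step is ``easy to show''; your pigeonhole/itinerary argument is the standard way to supply that detail, and the rest matches the paper's outline.
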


\begin{proof}
It is clear that  $\bigcap_{i \in \omega} C_i$ is a subgroup of $\aut(\C)$, and it is easy to show that it has bounded index (at most $2^{\aleph_0}$). Moreover, it is clearly relatively type-definable. Thus, the fact that it contains $\autf_{KP}(\C)$ follows from Proposition \ref{proposition: bounded index implies containment of Autf-KP}.
\end{proof}


\begin{lem}\label{lemma: relative type-definability is preserved under products}
i) Let $\pi(\bar x, \bar y)$ be a partial type (over $\emptyset$) and $\bar a$, $\bar b$ short tuples from $\C$ corresponding to $\bar x$ and $\bar y$, respectively. Then $A_{\pi,\bar a,\bar b}^{-1}=A_{\pi',\bar b, \bar a}$, where $\pi'(\bar y, \bar x) :=\pi(\bar x, \bar y)$ (i.e. the type $\pi$ with the exchanged roles of variables).\\
ii) Let $n\geq 2$ be a natural number.  Let $\bar x$, $\bar y$ and $\bar x_1,\dots, \bar x_n$ be disjoint, short tuples of variables of the same length. Then there exists a partial type $\Phi_n(\bar x, \bar y, \bar x_1, \dots, \bar x_n)$ such that for every partial types $\pi_1(\bar x_1, \bar y), \dots, \pi_n(\bar x_n, \bar y)$ and tuple $\bar a$ corresponding to $\bar x$ one has 
$$A_{\pi_1, \bar a} \cdot \ldots \cdot A_{\pi_n,\bar a} = A_{\pi, \bar a},$$
where 
$$\pi(\bar x, \bar y) := (\exists \bar x_1, \dots, \bar x_n) (\pi_1(\bar x_1,\bar y) \wedge \dots \wedge \pi_n(\bar x_n,\bar y) \wedge \Phi_n(\bar x, \bar y, \bar x_1,\dots,\bar x_n)).$$
\end{lem}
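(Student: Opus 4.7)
For part (i), the statement unwinds immediately from the definitions: $\sigma\in A_{\pi,\bar a,\bar b}^{-1}$ iff $\sigma^{-1}\in A_{\pi,\bar a,\bar b}$, i.e.\ $\C\models\pi(\sigma^{-1}(\bar a),\bar b)$. Applying $\sigma$ and using that $\pi$ is a partial type over $\emptyset$ (hence preserved by $\aut(\C)$), this is equivalent to $\C\models\pi(\bar a,\sigma(\bar b))$, which after relabeling variables is $\sigma\in A_{\pi',\bar b,\bar a}$. No further work is needed beyond this direct chase.

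For part (ii), the plan is to encode a factorization $\sigma=\sigma_1\cdots\sigma_n$ with $\sigma_i\in A_{\pi_i,\bar a}$ as a \emph{chain} of intermediate tuples. Given such a factorization, set $\bar x_i:=\sigma_i(\bar a)$ and $\bar c_i:=\sigma_i\sigma_{i+1}\cdots\sigma_n(\bar a)$ for $i=1,\ldots,n+1$, so that $\bar c_{n+1}=\bar a$ and $\bar c_1=\sigma(\bar a)$. Each $\sigma_i$ carries the pair $(\bar a,\bar c_{i+1})$ to $(\bar x_i,\bar c_i)$, which yields $\pi_i(\bar x_i,\bar a)$ together with the equivalences $\bar a\bar c_{i+1}\equiv\bar x_i\bar c_i$ over $\emptyset$ (the boundary case $i=n$ forcing $\bar c_n=\bar x_n$, and $i=1$ identifying $\bar c_1$ with $\sigma(\bar a)$). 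Conversely, given tuples $\bar x_i,\bar c_j$ satisfying these conditions, strong $\kappa$-homogeneity of $\C$ produces automorphisms $\sigma_i$ mapping $(\bar a,\bar c_{i+1})$ to $(\bar x_i,\bar c_i)$; their composition then agrees with $\sigma$ on $\bar a$, and the (potentially non-trivial) difference $\sigma^{-1}\sigma_1\cdots\sigma_n\in\Fix(\bar a)$ can be absorbed into the last factor $\sigma_n$ without disturbing $\sigma_n(\bar a)=\bar x_n$, hence preserving membership in $A_{\pi_n,\bar a}$.

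The one remaining point is to produce $\Phi_n$ as an actual partial type. The intended content of $\Phi_n(\bar x,\bar y,\bar x_1,\ldots,\bar x_n)$ is: there exist $\bar c_2,\ldots,\bar c_{n-1}$ with $\bar y\bar c_2\equiv\bar x_1\bar x$, $\bar y\bar c_{i+1}\equiv\bar x_i\bar c_i$ for $2\le i\le n-2$, and $\bar y\bar x_n\equiv\bar x_{n-1}\bar c_{n-1}$ (unwinding the chain with $\bar c_1=\bar x$, $\bar c_n=\bar x_n$). The body is a partial type over $\emptyset$ in $(\bar x,\bar y,\bar x_i,\bar c_j)$, and its existential projection onto $(\bar x,\bar y,\bar x_i)$ is again a partial type: in a $\kappa$-saturated model the collection of $\emptyset$-formulas in the retained variables that are implied by the original type axiomatizes the projection, since a standard compactness argument turns satisfaction of each such formula at a point into finite satisfiability — and hence, by saturation, satisfiability — of the $\bar c_j$-portion. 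Crucially, $\Phi_n$ is defined uniformly from $n$ and the common sorts of $\bar x,\bar y,\bar x_i$, and does not depend on the $\pi_i$'s; this is what makes the decomposition in the statement meaningful. The only real obstacle is the careful bookkeeping of the boundary indices ($i=1$ and $i=n$) and the clean verification that the existential projection is a bona fide partial type — neither is deep, but both must be written out precisely.
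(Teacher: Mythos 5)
Your proof is correct and follows essentially the same approach as the paper. Part (i) is the same one-line computation, and in part (ii) you use the same chaining idea of linking intermediate tuples via $\equiv$-conditions; the only (cosmetic) difference is that your auxiliary tuples $\bar c_i = \sigma_i\cdots\sigma_n(\bar a)$ are right partial products with $\sigma_i$ as the witnessing automorphism for each link, whereas the paper's $\bar z_i = \sigma_1\cdots\sigma_{i+1}(\bar a)$ are left partial products witnessed by $\sigma_1\cdots\sigma_i$ — both encode the same factorization and, as you note, both rely on the standard fact that the existential projection of a partial type over bounded tuples is again a partial type.
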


\begin{proof}
(i) follows immediately from the fact that for any $\sigma \in \aut(\C)$
$$  \C \models \pi(\sigma(\bar a), \bar b) \iff \C \models \pi(\bar a, \sigma^{-1}(\bar b)) \iff \C \models \pi'(\sigma^{-1}(\bar b), \bar a).$$
(ii) We will show that for $n=2$ the type $\Phi_2(\bar x, \bar y, \bar x_1,\bar x_2) := (\bar x \bar x_1 \equiv \bar x_2 \bar y)$ and for $n \geq 3$ the type $\Phi_n(\bar x, \bar y, \bar x_1, \dots, \bar x_n)$ defined as
$$(\exists \bar z_1,\dots,\bar z_{n-2})(\bar x \bar z_{n-2} \equiv \bar x_n \bar y \wedge \bar z_{n-2} \bar z_{n-3} \equiv \bar x_{n-1}\bar y \wedge \dots \wedge \bar z_2 \bar z_1 \equiv \bar x_3 \bar y \wedge \bar z_1 \bar x_1 \equiv \bar x_2 \bar y)$$
is as required.

First, let us see that $A_{\pi_1, \bar a} \cdot \ldots \cdot A_{\pi_n,\bar a} \subseteq A_{\pi, \bar a}$. Take $\sigma$ from the left hand side, i.e. $\sigma = \sigma_1 \ldots \sigma_n$, where $\models \pi_i(\sigma_i(\bar a), \bar a)$. 
Then $\models \pi(\sigma(\bar a),\bar a)$ is witnessed by  $\bar x_i := \sigma_i(\bar a)$ for $i=1,\dots,n$ and $\bar z_i := (\sigma_1\dots \sigma_{i+1})(\bar a)$ for $i=1,\dots, n-2$. So $\sigma \in A_{\pi, \bar a}$.

Finally, we will justify that  $A_{\pi_1, \bar a} \cdot \ldots \cdot A_{\pi_n,\bar a} \supseteq A_{\pi, \bar a}$. 
Consider the case $n \geq 3$.
Take any $\sigma$ such that  $\models \pi(\sigma(\bar a),\bar a)$. Let $\bar a_1,\dots,\bar a_n$ be witnesses for $\bar x_1,\dots, \bar x_n$, and $\bar b_1,\dots, \bar b_{n-2}$ be witnesses for $\bar z_1,\dots,\bar z_{n-2}$, i.e.:
\begin{enumerate}
\item $\models \pi_i(\bar a_i,\bar a)$ for $i=1,\dots,n$, and 
\item $\sigma(\bar a) \bar b_{n-2} \equiv \bar a_n \bar a \wedge \bar b_{n-2} \bar b_{n-3} \equiv \bar a_{n-1}\bar a \wedge \dots \wedge \bar b_2 \bar b_1 \equiv \bar a_3 \bar a \wedge \bar b_1 \bar a_1 \equiv \bar a_2 \bar a$.
\end{enumerate}
By (2), there are $\tau_1,\dots,\tau_{n-1} \in \aut(\C)$ mapping the right hand sides of the equivalences in (2) to the left hand sides. 
Then $\tau_1(\bar a_n)=\sigma(\bar a)$, so $\tau_1^{-1}\sigma(\bar a)=\bar a_n$, so $\tau_1^{-1}\sigma \in A_{\pi_n,\bar a}$ by (1). Next, $\tau_1(\bar a)= \bar b_{n-2}=\tau_2 (\bar a_{n-1})$, so $\tau_2^{-1}\tau_1(\bar a) = \bar a_{n-1}$, so $\tau_2^{-1}\tau_1 \in A_{\pi_{n-1},\bar a}$ by (1). We continue in this way, obtaining in the last step: $\tau_{n-1}(\bar a)=\bar a_1$, so $\tau_{n-1} \in A_{\pi_{1},\bar a}$ by (1). Therefore, 
$$\sigma = \tau_{n-1} (\tau_{n-1}^{-1}\tau_{n-2}) \dots (\tau_2^{-1}\tau_1)(\tau_1^{-1}\sigma) \in A_{\pi_1, \bar a} \cdot \ldots \cdot A_{\pi_n,\bar a}.$$
For $n=2$, in (2), we just have $\sigma(\bar a) \bar a_1 \equiv \bar a_2 \bar a$, so taking $\tau_1 \in \aut(\C)$ which maps $\bar a_2 \bar a$ to $\sigma(\bar a) \bar a_1$, we get $\tau_1^{-1}\sigma \in A_{\pi_2,\bar a}$ and  $\tau_{1} \in A_{\pi_{1},\bar a}$ , hence $\sigma \in A_{\pi_1, \bar a} \cdot A_{\pi_2, \bar a}$.
\end{proof}

\begin{cor}\label{corollary: compactness for relatively definable subsets}
Let $\pi_1(\bar x,\bar y), \dots, \pi_n(\bar x,\bar y)$ be partial types, $\bar a$ a tuple corresponding to $\bar x$ and $\bar y$, and $\epsilon_1,\dots,\epsilon_n \in \{-1,1\}$.\\
(i)  Then 
$$A_{\pi_1, \bar a}^{\epsilon_1} \cdot \ldots \cdot A_{\pi_n,\bar a}^{\epsilon_n} = \bigcap \left\{A_{\varphi_1, \bar a}^{\epsilon_1} \cdot \ldots \cdot A_{\varphi_n,\bar a}^{\epsilon_n} : \pi_1 \vdash \varphi_1,\dots, \pi_n \vdash \varphi_n\right\}.$$
ii) If $A_{\pi_1, \bar a}^{\epsilon_1} \cdot \ldots \cdot A_{\pi_n,\bar a}^{\epsilon_n}$ is contained in a relatively definable subset $A$ of $\aut(\C)$, then there are $\varphi_i(\bar x, \bar y)$ implied by $\pi_i(\bar x, \bar y)$ for $i=1,\dots,n$, such that  $A_{\varphi_1, \bar a}^{\epsilon_1} \cdot \ldots \cdot A_{\varphi_n,\bar a}^{\epsilon_n} \subseteq A$.
\end{cor}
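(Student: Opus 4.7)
The plan is to express both sides of (i) in the form $A_{\sigma, \bar a}$ via Lemma \ref{lemma: relative type-definability is preserved under products}(ii) and then exploit the saturation of $\C$. A preliminary observation reduces the problem to the case where all $\epsilon_i = 1$: by Lemma \ref{lemma: relative type-definability is preserved under products}(i), $A_{\pi_i, \bar a}^{-1} = A_{\pi_i', \bar a}$ with $\pi_i'(\bar y, \bar x) = \pi_i(\bar x, \bar y)$, and $\pi_i \vdash \varphi_i$ is equivalent to $\pi_i' \vdash \varphi_i'$, so the statement for the original data is equivalent to the statement for the modified types with all exponents $+1$.

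For part (i), the inclusion $\subseteq$ is immediate since $\pi_i \vdash \varphi_i$ yields $A_{\pi_i, \bar a} \subseteq A_{\varphi_i, \bar a}$. For the reverse inclusion, assume $\sigma$ lies in every product $A_{\varphi_1, \bar a} \cdots A_{\varphi_n, \bar a}$ with $\pi_i \vdash \varphi_i$. I would then consider the partial type
$$\Sigma(\bar x_1, \ldots, \bar x_n) := \Phi_n(\sigma(\bar a), \bar a, \bar x_1, \ldots, \bar x_n) \cup \bigcup_i \bigl(\pi_i(\bar x_i, \bar a) \cup \{\bar x_i \equiv \bar a\}\bigr)$$
in fresh variables over the small parameter set $\bar a\sigma(\bar a)$. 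Any finite subset of $\Sigma$ is implied by a fragment $\bigwedge_i \varphi_i(\bar x_i, \bar a) \wedge \theta(\sigma(\bar a), \bar a, \bar x_1, \ldots, \bar x_n)$ with $\pi_i \vdash \varphi_i$ and $\Phi_n \vdash \theta$, and this fragment is realized by the tuple $(\sigma_i^{\varphi}(\bar a))_i$ coming from any decomposition $\sigma = \sigma_1^{\varphi} \cdots \sigma_n^{\varphi}$ with $\sigma_i^{\varphi} \in A_{\varphi_i, \bar a}$ (such a decomposition exists by hypothesis, and the witnesses automatically satisfy $\Phi_n$ by the $\subseteq$ direction in the proof of Lemma \ref{lemma: relative type-definability is preserved under products}(ii)). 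By saturation of $\C$, $\Sigma$ is realized by some $\bar a_1, \ldots, \bar a_n$, and strong homogeneity supplies $\sigma_i \in \aut(\C)$ with $\sigma_i(\bar a) = \bar a_i$, so $\sigma_i \in A_{\pi_i, \bar a}$. Finally, the $\supseteq$ construction in the proof of Lemma \ref{lemma: relative type-definability is preserved under products}(ii), applied to these witnesses of $\Phi_n(\sigma(\bar a), \bar a, \bar a_1, \ldots, \bar a_n)$, writes $\sigma$ explicitly as a product of elements of the $A_{\pi_i, \bar a}$.

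Part (ii) will follow by a standard compactness argument. Since $A$ is relatively definable, $A^c = \{\sigma : \C \models \neg\chi(\sigma(\bar a'), \bar d)\}$ for some formula $\chi$ and tuples $\bar a', \bar d$; after enlarging $\bar a$ if necessary, we may assume $\bar a'$ is a subtuple of $\bar a$. The hypothesis then translates, using Lemma \ref{lemma: relative type-definability is preserved under products}(ii) (with the inverse-swap reduction above), into the inconsistency of the partial type obtained from $\Sigma$ by replacing the parameter $\sigma(\bar a)$ with a fresh variable $\bar u$ and adjoining $\{\bar u \equiv \bar a\} \cup \{\neg\chi(\bar u', \bar d)\}$, where $\bar u'$ is the subtuple of $\bar u$ corresponding to $\bar a'$. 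Compactness yields a finite inconsistent subset; absorbing the finitely many formulas drawn from each $\pi_i$ into a single $\varphi_i$ (with $\pi_i \vdash \varphi_i$), and strengthening the chosen fragments of $\Phi_n$ and of the conjugacy conditions back to their full versions (which only tightens the constraint), produces inconsistency with $\varphi_i$ in place of $\pi_i$, which is precisely $A_{\varphi_1, \bar a}^{\epsilon_1} \cdots A_{\varphi_n, \bar a}^{\epsilon_n} \subseteq A$. The main bookkeeping point is that the object $\pi$ produced by Lemma \ref{lemma: relative type-definability is preserved under products}(ii) is existential over a partial type rather than a partial type in the naive sense; exposing the existential witnesses $\bar x_i$ as free variables in $\Sigma$ turns this into a genuine partial type on which compactness applies directly.
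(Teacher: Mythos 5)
Your proof is correct and follows essentially the same route as the paper: reduce to $\epsilon_i = 1$ via Lemma \ref{lemma: relative type-definability is preserved under products}(i), represent each product via Lemma \ref{lemma: relative type-definability is preserved under products}(ii), and invoke compactness together with saturation and strong homogeneity of $\C$. Your explicit unfolding of the existential witnesses into free variables of a genuine partial type $\Sigma$ is a useful clarification of the paper's informal use of $(\exists \bar x_1,\ldots)$ prefixed to a partial type, but not a different argument.
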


\begin{proof}
This follows from Lemma \ref{lemma: relative type-definability is preserved under products}, using compactness and the fact that $\C$ is a monster model. But let us give some details.

By Lemma \ref{lemma: relative type-definability is preserved under products}(i), we can clearly assume that $\epsilon_i=1$ for all $i$. 
Then item (i) follows directly  from Lemma \ref{lemma: relative type-definability is preserved under products}(ii).
So it remains to show item (ii).

Take a formula $\psi(\bar x',\bar y')$ (where $\bar x'$ and $\bar y'$ are of the same length and are disjoint from both $\bar x$ and $\bar y$) and $\bar a'$ such that $A=A_{\psi, \bar a'}$. We can also treat $\psi$ as $\psi(\bar x \bar x', \bar y\bar y')$, and then $A=A_{\psi,\bar a \bar a'}$. Similarly, $\pi_i$ can be treated as $\pi_i(\bar x \bar x',\bar y \bar y')$, and then the original product $A_{\pi_1, \bar a} \cdot \ldots \cdot A_{\pi_n,\bar a}$ can be written as $A_{\pi_1, \bar a \bar a'} \cdot \ldots \cdot A_{\pi_n,\bar a \bar a'}$. By Lemma \ref{lemma: relative type-definability is preserved under products}(ii) and strong $\kappa$-homogeneity of $\C$, we get that the type
$$(\exists \bar x_1\bar x_1',\dots, \bar x_n\bar x_n') (\pi_1(\bar x_1x_1',\bar a \bar a') \wedge \dots \wedge \pi_n(\bar x_n \bar x_n',\bar a \bar a') \wedge \Phi_n(\bar x \bar x', \bar a \bar a', \bar x_1 \bar x_1',\dots,\bar x_n\bar x_n'))$$
in conjunction with $\bar x  \bar x' \equiv \bar a \bar a'$ implies the type $\psi(\bar x \bar x',\bar a \bar a')$. Hence, by compactness, each type $\pi_i(\bar x\bar x',\bar y \bar y')$ can be replaced by a formula $\varphi_i(\bar x \bar x',\bar y \bar y')$ implied by  $\pi_i(\bar x\bar x',\bar y \bar y')$ so that the above implication is still valid. Since the types $\pi_i$ use only variables $\bar x, \bar y$, the formulas $\varphi_i$ can also be chosen only in variables $\bar x,\bar y$. Then, by the above implication (with the $\pi_i$'s replaced by $\varphi_i$'s) and Lemma \ref{lemma: relative type-definability is preserved under products}(ii), we get that $A_{\varphi_1, \bar a} \cdot \ldots \cdot A_{\varphi_n,\bar a} = A_{\varphi_1, \bar a \bar a'} \cdot \ldots \cdot A_{\varphi_n,\bar a \bar a'} \subseteq A$.
\end{proof}

\begin{lem}\label{lemma: the intersection contained in Autf-L}
Let $p(\bar x) \in S(\emptyset)$ with $\bar x$ short, $q \in S_p(\C)$, $M \prec \C$ small, and $\bar \alpha \models q|_M$.
Then $A_{q|_{\bar \alpha},\bar \alpha}A_{q|_{\bar \alpha},\bar \alpha}A_{q|_{\bar \alpha},\bar \alpha}^{-1}A_{q|_{\bar \alpha},\bar \alpha}^{-1} \bar \alpha \subseteq \{\bar \beta \subset \C: d_L(\bar \alpha,\bar \beta) \leq 4\} \subseteq [\bar \alpha]_{E_L}$.
\end{lem}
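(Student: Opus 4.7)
The second inclusion is immediate, since any two tuples at finite Lascar distance lie in the same $E_L$-class. Focus on the first. Fix $\sigma = \sigma_1 \sigma_2 \sigma_3^{-1} \sigma_4^{-1}$ with each $\sigma_i \in A := A_{q|_{\bar\alpha}, \bar\alpha}$, and let $\bar\beta := \sigma(\bar\alpha)$. The plan is to exhibit a Lascar chain
$$\bar\alpha = \bar y_0 \equiv_{N_0} \bar y_1 \equiv_{N_1} \bar y_2 \equiv_{N_2} \bar y_3 \equiv_{N_3} \bar y_4 = \bar\beta$$
of length $4$, by taking the $\bar y_k$ to be the successive partial products applied to $\bar\alpha$, namely $\bar y_1 = \sigma_4^{-1}(\bar\alpha)$, $\bar y_2 = \sigma_3^{-1}\sigma_4^{-1}(\bar\alpha)$, $\bar y_3 = \sigma_2\sigma_3^{-1}\sigma_4^{-1}(\bar\alpha)$, and then choosing the $N_k$ as suitable conjugates of $M$.

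The data at hand is twofold. First, $\bar\alpha \models q|_M$, which means any other realization of $q|_M$ in $\C$ is $\equiv_M$-equivalent to $\bar\alpha$ and hence at Lascar distance at most $1$ from it. Second, each $\sigma_i \in A$ gives $(\sigma_i(\bar\alpha), \bar\alpha) \models q|_{\bar\alpha}(\bar x, \bar y)$, i.e.\ $\tp(\sigma_i(\bar\alpha)/\bar\alpha) = q|_{\bar\alpha}$; applying $\sigma_i^{-1}$ (which is an $\emptyset$-elementary map) yields the dual statement $(\bar\alpha, \sigma_i^{-1}(\bar\alpha)) \models q|_{\bar\alpha}(\bar x, \bar y)$. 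These tuple-level equivalences are what each elementary factor contributes.

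To witness an individual step $\bar y_{k-1} \equiv_{N_k} \bar y_k$, my plan is to introduce a pivot $\bar\gamma_k$ realizing $q|_{N_k'}$ over a small model $N_k' \prec \C$ that simultaneously contains $M$ and the relevant parameter sets produced by the cumulative composition of the $\sigma_i$'s. Because $\bar\gamma_k$ realizes $q|_{M}$ it is $\equiv_M$-equivalent to $\bar\alpha$ (hence to the appropriate conjugate of $\bar\alpha$ after translating by the cumulative automorphism), while because it realizes $q|_{\bar\alpha}$ it has the same type as $\bar y_k$ over the relevant tuple. Transporting the pivot through the cumulative automorphism realises the step over a conjugate model $N_k$, which I will take to be $\sigma_4^{-1}(M)$ for the first step, $\sigma_3^{-1}\sigma_4^{-1}(M)$ for the second, and so on.

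The main obstacle is precisely the amplification from tuple-equivalence (which is all that membership in $A$ directly delivers, since the relation $q|_{\bar\alpha}(\bar x,\bar y)$ is only over a short tuple, not a model) to model-equivalence (as required by the definition of $d_L$). This is what forces the chaining argument and is also the reason the bound is $4$: each of the four elementary factors $\sigma_1, \sigma_2, \sigma_3^{-1}, \sigma_4^{-1}$ can, after this amplification, be accounted for by one Lascar step, but no fewer. Once each single step is established, the chain concatenates to give $d_L(\bar\alpha, \bar\beta) \leq 4$, completing the proof.
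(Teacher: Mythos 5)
Your overall strategy — isolate a single-step claim and chain four steps — is the same as the paper's, and your two key observations (that any realization of $q|_M$ is $\equiv_M$-equivalent to $\bar\alpha$, and that $\sigma_i(\bar\alpha)\models q|_{\bar\alpha}$) are exactly what the paper uses in its ``Claim'': take a pivot $\bar\gamma\models q|_{M\bar\alpha}$, note $d_L(\bar\gamma,\bar\alpha)\leq 1$, and transport via $\bar\beta\equiv_{\bar\alpha}\bar\gamma$ to conclude $d_L(\bar\beta,\bar\alpha)\leq 1$ for any $\bar\beta\models q|_{\bar\alpha}$. That single-step fact, together with $\aut(\C)$-invariance of $d_L$ and the triangle inequality, is all that is needed.

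However, the chain you propose is the wrong one, and that is a genuine gap. Your intermediate tuples accumulate the factors of $\sigma=\sigma_1\sigma_2\sigma_3^{-1}\sigma_4^{-1}$ from the right: $\bar y_1=\sigma_4^{-1}(\bar\alpha)$, $\bar y_2=\sigma_3^{-1}\sigma_4^{-1}(\bar\alpha)$, etc. The first step $\bar\alpha\to\sigma_4^{-1}(\bar\alpha)$ is fine (apply $\sigma_4^{-1}$ to $d_L(\sigma_4(\bar\alpha),\bar\alpha)\leq 1$). But the second step asks for $d_L(\sigma_4^{-1}(\bar\alpha),\,\sigma_3^{-1}\sigma_4^{-1}(\bar\alpha))\leq 1$. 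To reduce this to a statement about $\bar\alpha$ you would conjugate by $\sigma_4$, obtaining $d_L(\bar\alpha,\,\sigma_4\sigma_3^{-1}\sigma_4^{-1}(\bar\alpha))\leq 1$ — but $\sigma_4\sigma_3^{-1}\sigma_4^{-1}(\bar\alpha)$ is not a realization of $q|_{\bar\alpha}$ and nothing you have says anything about it. Your pivot plan cannot repair this, because the single-step estimate is inherently about translates of $\bar\alpha$ itself, and $\sigma_4^{-1}(\bar\alpha)$ and $\sigma_3^{-1}\sigma_4^{-1}(\bar\alpha)$ are both images of $\bar\alpha$ under different automorphisms that are not related by a single conjugation that the hypotheses control. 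The fix is to accumulate from the left: take $\bar y_1=\sigma_1(\bar\alpha)$, $\bar y_2=\sigma_1\sigma_2(\bar\alpha)$, $\bar y_3=\sigma_1\sigma_2\sigma_3^{-1}(\bar\alpha)$, $\bar y_4=\sigma(\bar\alpha)$. Then every step $\bar y_{k-1}\to\bar y_k$ is, after conjugating by the inverse of the already-accumulated prefix, precisely $d_L(\bar\alpha,\sigma_k^{\pm 1}(\bar\alpha))\leq 1$, which follows from the Claim and invariance. This is what the paper's sequence of inequalities implements (in the presentation there, it is stated as translated triangle inequalities rather than an explicit chain, but unwinding them produces exactly the left-accumulated chain). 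Note also the minor point that $\sigma_4^{-1}(\bar\alpha)$ realizes the ``dual'' of $q|_{\bar\alpha}$ (the type $\tp(\bar\alpha/\sigma_4^{-1}(\bar\alpha))$ is the conjugate of $q|_{\bar\alpha}$, not $q|_{\bar\alpha}$ itself), so even your intended use of the pivot for the individual steps cannot treat $\bar y_k$ as a realization of $q|_{\bar\alpha}$.
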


\begin{proof}
Let us start from the following\\[-2mm]

\begin{clm}
For any $\bar \beta \models q|_{\bar \alpha}$, $d_L(\bar \beta, \bar \alpha) \leq 1$.
\end{clm}

\begin{clmproof}
Take $\bar \gamma \models q|_{M\bar \alpha}$. Then $d_L(\bar \gamma, \bar \alpha) \leq 1$, so the conclusion follows from the fact that $\bar \beta \equiv_{\bar \alpha} \bar \gamma$.  
\end{clmproof}

Now, consider any $\sigma_1,\sigma_2,\sigma_3,\sigma_4 \in A_{q|_{\bar \alpha},\bar \alpha}$. Then $\sigma_i(\bar \alpha) \models q|_{\bar \alpha}$, so, by the claim, we get $d_L(\sigma_i(\bar \alpha), \bar \alpha) \leq 1$. Therefore, $d_L(\sigma_4^{-1}(\bar \alpha), \bar \alpha) \leq 1$, so $d_L(\sigma_3^{-1}\sigma_4^{-1}(\bar \alpha), \sigma_3^{-1}(\bar \alpha)) \leq 1$, so $d_L(\sigma_3^{-1}\sigma_4^{-1}(\bar \alpha), \bar \alpha) \leq 2$, so $d_L(\sigma_2\sigma_3^{-1}\sigma_4^{-1}(\bar \alpha), \sigma_2(\bar \alpha)) \leq 2$, so $d_L(\sigma_2\sigma_3^{-1}\sigma_4^{-1}(\bar \alpha), \bar \alpha) \leq 3$, so  $d_L(\sigma_1\sigma_2\sigma_3^{-1}\sigma_4^{-1}(\bar \alpha), \sigma_1(\bar \alpha)) \leq 3$, so  $d_L(\sigma_1\sigma_2\sigma_3^{-1}\sigma_4^{-1}(\bar \alpha), \bar \alpha) \leq 4$.
\end{proof}

The proof of the next lemma uses a version of the stabilizer theorem obtained in \cite[Corollary 2.12]{HKP1}. We will not recall here all the terminology involved in  \cite[Corollary 2.12]{HKP1}; the reader may consult Subsections 2.1, 2.2, and 2.3 of \cite{HKP1}. Let us recall here the main things. 
$\lL_k^{\textrm{gen}}$, $ k \in \omega$, is a recursively defined notion of largeness of $\bigvee$-definable sets concentrated on a definable group $G$, which is invariant under left translations. Passing to a sufficiently saturated extension $\bar G$ of $G$, for any $\bigvee$-definable subset $Y$,  $\lL_0^{\textrm{gen}}(Y)$ means that $Y \ne \emptyset$ and $\lL_k^{\textrm{gen}}(Y)$ means precisely that $\{g \in \bar G: \lL_{k-1}^{\textrm{gen}}(gY \cap Y)\}$ is generic (i.e. finitely many left translates of it cover $\bar G$). Next, $\St_{\lL_k} (Y) := \{g: \lL_k(gY \cap Y) \}$ is an operator from the class of $\bigvee$-definable sets concentrated on $G$ to itself (see the paragraphs after Remark 2.3 in \cite{HKP1}). We would like to emphasize that $\St_{\lL_k} (Y)$ need not be a subgroup of $G$. (In \cite{HKP1}, we are more precise and consider the class of $\bigvee$-positively definable sets, which is essential in the applications in \cite{HKP1}, but here we do not care about positive definability).  We will need the following basic remark.

\begin{rem}\label{remark: St invariant under translations}
If a $\bigvee$-definable set $Y$ is invariant under left translations by the elements of some subgroup $H$ of $G$, then $\St_{\lL_k} (Y)$ is invariant under both left and right translations by the elements of $H$. 
\end{rem}

Instead of stating \cite[Corollary 2.12]{HKP1} in full generality and with the full power, we give a particular case, which is sufficient for our application.

\begin{fct}\label{fact: simplified Cor. 2.12 from KKP1}
Let $G$ be a group, and $A \subseteq G$. Let $\mathcal{A}$ be a Boolean algebra of subsets of $G$ which is invariant under left translations and includes $A$ and all sets of the form $(g_1A \cap \dots \cap g_kA)A$ (with $k \geq 1$ and $g_1,\dots,g_k \in G$). Let $\mu$ be a left-invariant, finitely additive measure on $\mathcal{A}$ with $\mu(A)>0$. Then there exist $l \in \mathbb{N}_{>0}$ and $g_1,\dots,g_n \in G$ such that for $A':=A \cap g_1A \cap \dots \cap g_nA$, the set $S:=\St_{\lL_{l-1}^{\textrm{gen}}}(A')$ (computed with respect to $\Th(G,\cdot, A)$) is generic, symmetric, and satisfies $S^{16} \subseteq AAA^{-1}A^{-1}$.
\end{fct}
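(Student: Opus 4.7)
The plan is to carry out the Massicot--Wagner stabilizer construction \cite[Theorem 12]{MaWa} in this measure-theoretic setting, and then to reinterpret the resulting ``measure stabilizer'' model-theoretically as $\St_{\lL_{l-1}^{\textrm{gen}}}(A')$ for a suitable $A'$ of the form required by the statement.

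First I would observe that since $\mu$ is a left-invariant finitely additive probability measure and $\mu(A) > 0$, the pigeonhole principle bounds the number of pairwise $\mu$-disjoint left translates of $A$ by $K := \lceil 1/\mu(A) \rceil$; this plays the role of the ``$K$-approximate subgroup'' constant in \cite{MaWa}. For $t \in (0,1]$, let $\mathcal{B}_t$ consist of those $B$ of the form $A \cap g_1 A \cap \dots \cap g_n A$ with $\mu(B) \geq t \mu(A)$ (all such $B$ lie in $\mathcal{A}$, and $BA \in \mathcal{A}$ by hypothesis on $\mathcal{A}$). Set $f(t) := \inf\{\mu(BA)/\mu(A) : B \in \mathcal{B}_t\}$. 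The infimum lemma \cite[Lemma 11]{MaWa} supplies, for any $\epsilon > 0$, a value $t > 0$ with $f(t^2/K) \geq (1-\epsilon)\, f(t)$.

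Now fix $\epsilon := 1/32$, pick $A' \in \mathcal{B}_t$ (of the required form $A \cap g_1 A \cap \dots \cap g_n A$) realizing $\mu(A'A)/\mu(A) \leq (1+\epsilon)\, f(t)$, and define
\[ X := \{ g \in G : \mu(gA' \cap A') \geq (t^2 / 2K)\, \mu(A) \}. \]
The standard computation on \cite[p.~61, ll.~15--23]{MaWa} shows that for any $g_1, \dots, g_{16} \in X$ the product $g_1 \cdots g_{16} A'A$ meets $A'A$, so
\[ X^{16} \subseteq (A'A)(A'A)^{-1} \subseteq A\, A\, A^{-1} A^{-1}, \]
where the second inclusion uses $A' \subseteq A$. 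Symmetry of $X$ is immediate from left-invariance of $\mu$, and genericity of $X$ follows as in \cite{MaWa} by bounding the cardinality of any pairwise $\mu$-disjoint family of left translates of $A'$.

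The substantive step, and the genuine content of \cite[Corollary 2.12]{HKP1}, is to replace this measure-theoretic $X$ by $S := \St_{\lL_{l-1}^{\textrm{gen}}}(A')$ for an $l$ extracted from the nesting depth of the MW argument. The hierarchy $\lL_k^{\textrm{gen}}$ is designed so that $\lL_0^{\textrm{gen}}(Y)$ corresponds to non-emptiness (a consequence of positive measure), and the recursive clause ``$\lL_k^{\textrm{gen}}(Y) \iff \{g : \lL_{k-1}^{\textrm{gen}}(gY \cap Y)\}$ is generic'' mirrors the iterated ``large intersection with translate'' condition used to define $X$ and to propagate through the MW estimate. I expect the main obstacle to be tracking this correspondence cleanly through all $16$ levels of nesting while simultaneously maintaining the containment $S^{16} \subseteq A\,A\,A^{-1} A^{-1}$; once this is done, symmetry and genericity of $S$ follow from the analogous properties of $X$ together with Remark \ref{remark: St invariant under translations} applied to the stabilizer of $A'$.
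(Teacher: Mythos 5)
There is a genuine gap, and you have in effect flagged it yourself. The statement to be proved is not about your measure-theoretically defined set $X = \{g : \mu(gA' \cap A') \geq (t^2/2K)\mu(A)\}$; it is specifically about the set $S := \St_{\lL_{l-1}^{\textrm{gen}}}(A')$, which is defined purely model-theoretically (via the recursive largeness hierarchy $\lL_k^{\textrm{gen}}$ for $\bigvee$-definable sets, computed with respect to $\Th(G,\cdot,A)$ and its saturated extension) and a priori has nothing to do with $\mu$. Your Massicot--Wagner computation shows that $X$ is generic, symmetric, and satisfies $X^{16} \subseteq A A A^{-1} A^{-1}$, but you then write that ``the substantive step'' is to replace $X$ by $S$ and that this is ``the genuine content of [Cor.~2.12, HKP1]'' -- and you do not carry it out. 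That replacement is exactly what needs to be proved; showing that the purely syntactic stabilizer $\St_{\lL_{l-1}^{\textrm{gen}}}(A')$ inherits the genericity, symmetry, and product-set containment that the measure argument yields for $X$ is the heart of \cite[Corollary 2.12]{HKP1}, and tracking the correspondence between the recursive $\lL_k^{\textrm{gen}}$-conditions and the measure estimates is a nontrivial induction, not a formality. Without it you have only reconstructed the classical MW argument, not proved the Fact.

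For comparison, the paper proves this statement by a one-line instantiation of \cite[Corollary 2.12]{HKP1}, taking $\mathcal{B} := \{A\}$, $N := 16$, $\mathcal{D} := \mathcal{A}$, and $m := \mu$; the Fact is exactly the specialization of that cited result, so the paper's ``proof'' consists of checking that the hypotheses line up. Your plan to reprove the content of \cite[Corollary 2.12]{HKP1} from first principles is a legitimate (if more ambitious) route, and your reconstruction of the MW front end is essentially sound -- but it stops precisely where the work begins. One smaller caveat: the role of $K$ here is not that $A$ is a $K$-approximate subgroup (that hypothesis from \cite{MaWa} is not available; we only know $\mu(A) > 0$ on an ambient algebra), and \cite{HKP1} is careful to run the argument under this weaker assumption; your sentence identifying $\lceil 1/\mu(A)\rceil$ with ``the $K$-approximate subgroup constant'' conflates the two settings and should be rephrased if you pursue this route.
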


\begin{proof}
Apply  \cite[Corollary 2.12]{HKP1}  for  $\mathcal{B}:=\{ A\}$, $N:=16$, ${\mathcal D}:=  {\mathcal A}$, and $m := \mu$. As a result we obtain $l \in \mathbb{N}_{>0}$ and $g_1,\dots,g_n \in G$ such that for $A':=A \cap g_1A \cap \dots \cap g_nA$, $S:=\St_{\lL_{l-1}^{\textrm{gen}}}(A')$ is generic as a $\bigvee$-definable set in $(G, \cdot, A)$, symmetric, and satisfies $S^{16} \subseteq AAA^{-1}A^{-1}$. In particular, $S$ is a generic subset of $G$, i.e. finitely many left translates of $S$ cover $G$.
\end{proof}

\begin{lem}\label{lemma:  Y8 contained in A'}
Assume $\aut(\C)$ is relatively amenable which is witnessed by an $\aut(\C)$-invariant, regular, Borel probability measure $\tilde{\mu}$ on $S_{\bar c}(\C)$. Let $\mu$ be the induced $\aut(\C)$-invariant, finitely additive, probability measure on the Boolean algebra ${\mathcal A}$ generated by relatively type-definable subsets of $\aut(\C)$, as described in Corollary \ref{corollary: measure extends to relatively type-definable sets}.
Suppose $A \subseteq \aut(\C)$ is relatively type-definable 
with $\mu(A)>0$ and $AAA^{-1}A^{-1} \subseteq A'$ for some relatively definable $A'\subseteq \aut(\C)$. Then there exists a relatively type-definable, generic, symmetric $Y \subseteq \aut(\C)$ such that $Y^8 \subseteq A'$.
\end{lem}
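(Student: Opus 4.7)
The plan is to apply the stabilizer theorem (Fact \ref{fact: simplified Cor. 2.12 from KKP1}) to the set $A$ with the measure $\mu$, and then extract the desired generic relatively type-definable ``core'' $Y$ by combining the compactness lemma (Corollary \ref{corollary: compactness for relatively definable subsets}) with the regularity of $\tilde\mu$. First, applying Fact \ref{fact: simplified Cor. 2.12 from KKP1} to $G := \aut(\C)$, to our $A$, and to $\mathcal A$ equipped with $\mu$, I obtain $l \geq 1$ and $g_1, \ldots, g_n \in \aut(\C)$ such that, setting $A'' := A \cap g_1 A \cap \dots \cap g_n A$, the set $S := \St_{\lL_{l-1}^{\textrm{gen}}}(A'')$ is generic, symmetric, and satisfies $S^{16} \subseteq AAA^{-1}A^{-1} \subseteq A'$. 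Writing $A = A_{\pi, \bar a}$ for a partial type $\pi(\bar x, \bar y)$ and short tuple $\bar a$, direct unwinding shows that $A$ is two-sided invariant under $H := \aut(\C/\bar a)$. Consequently, putting $\bar a' := (\bar a, g_1(\bar a), \dots, g_n(\bar a))$ and $H' := \aut(\C/\bar a')$, each $g_i A$ is two-sided $H'$-invariant (for $f \in H'$ one writes $f g_i = g_i (g_i^{-1} f g_i)$ with $g_i^{-1} f g_i \in H$, and similarly on the right), hence so is $A''$, and by Remark \ref{remark: St invariant under translations} so is $S$. A standard double-coset argument using strong homogeneity of $\C$ then gives $S = \{\sigma \in \aut(\C) : \tp(\sigma(\bar a')/\bar a') \in \mathcal{P}\}$ for some $\mathcal{P} \subseteq S_{\bar a'}(\bar a')$.

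Next, for each $p \in \mathcal{P}$, setting $A_p := A_{p, \bar a'}$, one has $A_p \subseteq S$ and so $(A_p A_p^{-1})^8 \subseteq S^{16} \subseteq A'$; Corollary \ref{corollary: compactness for relatively definable subsets}(ii) then supplies a formula $\psi_p \in p$ with $(A_{\psi_p, \bar a'} A_{\psi_p, \bar a'}^{-1})^8 \subseteq A'$. The set $U := \bigcup_{p \in \mathcal{P}} A_{\psi_p, \bar a'}$ has relatively type-definable complement $\bigcap_{p} A_{\neg \psi_p, \bar a'}$, so $U \in \mathcal A$; since $U \supseteq S$ and $S$ is generic, $U$ is generic, and hence $\mu(U) > 0$. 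Pulling this back to $\tilde\mu$ via Corollary \ref{corollary: measure extends to relatively type-definable sets} and using regularity of $\tilde\mu$ together with compactness (a compact subset of positive $\tilde\mu$-measure in the preimage of $U$ is covered by finitely many preimages of basic clopens $[\psi_p]$), I conclude that some single $\psi_p$ satisfies $\mu(A_{\psi_p, \bar a'}) > 0$.

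Finally, I take $Y := A_{\psi_p, \bar a'} A_{\psi_p, \bar a'}^{-1}$, which is symmetric, relatively type-definable by Lemma \ref{lemma: relative type-definability is preserved under products}, and satisfies $Y^8 \subseteq A'$ by the choice of $\psi_p$; genericity of $Y$ follows from $\mu(A_{\psi_p, \bar a'}) > 0$ by the standard observation that a maximal family $h_1 A_{\psi_p, \bar a'}, \ldots, h_k A_{\psi_p, \bar a'}$ of pairwise disjoint left translates must be finite, whence $h_1 Y, \ldots, h_k Y$ cover $\aut(\C)$. The step that I expect to require the most care is the invariance analysis of $A''$ and $S$: the parameter tuple has to be enlarged from $\bar a$ to $\bar a'$ in a controlled way so that $S$ can be expressed as a union of types over $\bar a'$, which is what enables the uniform application of Corollary \ref{corollary: compactness for relatively definable subsets}(ii) to each $p \in \mathcal P$.
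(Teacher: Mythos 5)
Your proof is correct and follows essentially the same route as the paper's: apply the simplified stabilizer theorem, enlarge the parameter tuple so that the resulting stabilizer set $S$ becomes a union of types over it, use Corollary~\ref{corollary: compactness for relatively definable subsets}(ii) per type to replace each $p$ by a formula $\psi_p$, and extract a positive-measure $\psi_p$ via regularity of $\tilde\mu$, yielding $Y := A_{\psi_p,\bar a'}A_{\psi_p,\bar a'}^{-1}$. The only cosmetic difference is that you verify two-sided $H'$-invariance of the intermediate intersection $A''$ directly, whereas the paper only records left-invariance and lets Remark~\ref{remark: St invariant under translations} deliver the needed two-sided invariance of $S$.
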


\begin{proof}
By Lemma \ref{lemma: relative type-definability is preserved under products}, relatively type-definable sets are closed under taking products and inversions, and one can easily check that also under left translations.\\[-2mm]

\begin{clm}
There exists a generic and symmetric set $S \subseteq \aut(\C)$ such that:
\begin{enumerate}
\item $S^{16} \subseteq AAA^{-1}A^{-1}$,
\item $S = \{ \sigma \in \aut(\C): \tp(\sigma(\bar a)/\bar a) \in {\mathcal P}\}$ for some ${\mathcal P} \subseteq S_{\bar a}(\bar a)$, where $\bar a$ is a short tuple (more precisely, a tuple of finitely many conjugates by elements of $\aut(\C)$ of the tuple over which $A$ is relatively type-definable). 
\end{enumerate}
\end{clm}

\begin{clmproof}

We apply Fact \ref{fact: simplified Cor. 2.12 from KKP1} for $G:=\aut(\C)$ and $A$, $\mu$ from the statement of Lemma \ref{lemma:  Y8 contained in A'}. As a result, we obtain a set $B=A \cap \sigma_1[A] \cap \dots \cap \sigma_n[A]$ for some $\sigma_i$'s in $\aut(\C)$ such that for some $l \in \mathbb{N}_{>0}$, $S:=\St_{\lL_{l-1}}(B)$ is generic, symmetric, and satisfies $S^{16} \subseteq AAA^{-1}A^{-1}$. 
%
Since $A$ is relatively type-definable over some short tuple $\bar \alpha$, so is $B$, but over $\bar a:= \bar \alpha \sigma_1(\bar \alpha)\dots\sigma_n(\bar \alpha)$.
Hence, $\aut(\C/\bar a) \cdot B=B$. 
Therefore, by Remark \ref{remark: St invariant under translations}, we get that 
$$\aut(\C/\bar a) \cdot S \cdot \aut(\C/\bar a) =  S,$$
which means that $S = \{ \sigma \in \aut(\C): \tp(\sigma(\bar a)/\bar a) \in {\mathcal P}\}$ for some ${\mathcal P} \subseteq S_{\bar a}(\bar a)$. To see the last thing, consider any $\sigma \in S$. We need to show that every $\tau \in \aut(\C)$ satisfying $\tp(\tau(\bar a)/\bar a) = \tp(\sigma(\bar a)/\bar a)$ belongs to $S$. For this note that there is $\tau' \in \aut(\C/\bar a)$ with $\tau(\bar a) = \tau' \sigma(\bar a)$. Then $\sigma^{-1}\tau'^{-1}\tau \in \aut(\C/\bar a)$ and clearly $\tau = \tau' \sigma (\sigma^{-1}\tau'^{-1}\tau)$. Hence, $\tau \in S$.
\end{clmproof}

Take any $p \in {\mathcal P}$. We can write $p=p(\bar x, \bar a)$ for the obvious complete type $p(\bar x, \bar y)$ over $\emptyset$. Then $(A_{p,\bar a} \cdot A_{p,\bar a}^{-1})^8 \subseteq (SS^{-1})^8 = S^{16} \subseteq AAA^{-1}A^{-1} \subseteq A'$. Hence, by Corollary \ref{corollary: compactness for relatively definable subsets}(ii), there is $\psi_p(\bar x, \bar y) \in p(\bar x,\bar y)$ for which $(A_{\psi_p,\bar a} \cdot A_{\psi_p,\bar a}^{-1})^8 \subseteq A'$.

Now, the complement of $\bigcup_{p \in {\mathcal P}} A_{\psi_p,\bar a}$ equals $\bigcap_{p \in {\mathcal P}} A_{\neg \psi_p, \bar a}$ which is clearly relatively type-definable. Thus, $\bigcup_{p \in {\mathcal P}} A_{\psi_p,\bar a} \in {\mathcal A}$. On the other hand, $S \subseteq \bigcup_{p \in {\mathcal P}} A_{\psi_p,\bar a}$ and $S$ being generic implies that $\bigcup_{p \in {\mathcal P}} A_{\psi_p,\bar a}$ is generic. Therefore, $\mu(\bigcup_{p \in {\mathcal P}} A_{\psi_p,\bar a}) >0$. 

Recall that $\tilde{\mu}$ is the $\aut(\C)$-invariant, regular, Borel probability measure on $S_{\bar c}(\C)$ from which $\mu$ is induced. Then $\tilde{\mu}(\bigcup_{p \in {\mathcal P}} [\psi_p])>0$, so, by regularity, there is a compact $K \subseteq \bigcup_{p \in {\mathcal P}} [\psi_p]$ of positive measure. But $K$ is covered by finitely many clopen sets $[\psi_p]$ one of which must be of positive measure, i.e. $\tilde{\mu}([\psi_p])>0$ for some $p \in {\mathcal P}$. Then $\mu(A_{\psi_p,\bar a})>0$.
This implies that $Y:= A_{\psi_p,\bar a} \cdot A_{\psi_p,\bar a}^{-1}$ is generic (as otherwise there would exist an infinite family of pairwise disjoint left translates of $A_{\psi_p,\bar a}$ which would contradict the fact that $\mu$ is a left invariant probability measure), and it is clearly symmetric. By Lemma \ref{lemma: relative type-definability is preserved under products}, it is also relatively type-definable. Moreover, by the choice of $\psi_p$, $Y^8 \subseteq A'$, so we are done. 
\end{proof}

\begin{cor}\label{corollary: Autf-KP contained in A4}
Assume $\aut(\C)$ is relatively amenable. By Corollary \ref{corollary: measure extends to relatively type-definable sets}, take the induced $\aut(\C)$-invariant, finitely additive, probability measure $\mu$ on the Boolean algebra ${\mathcal A}$ generated by relatively type-definable subsets of $\aut(\C)$.
Suppose $A \subseteq \aut(\C)$ is relatively type-definable and $\mu(A)>0$. Then $\autf_{KP}(\C) \subseteq AAA^{-1}A^{-1}$.
\end{cor}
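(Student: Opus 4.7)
The plan is to reduce the statement to Corollary \ref{corollary: Autf-KP contained in the intersection} by constructing a suitable descending chain. First I would fix an arbitrary relatively definable, symmetric set $A' \supseteq AAA^{-1}A^{-1}$ and reduce to proving $\autf_{KP}(\C) \subseteq A'$. This reduction is legitimate because, by Lemma \ref{lemma: relative type-definability is preserved under products}(i)--(ii), the product $AAA^{-1}A^{-1}$ is itself relatively type-definable, so it is the intersection of its defining relatively definable components (which can be symmetrized without losing the containment of $AAA^{-1}A^{-1}$, since the latter is symmetric).

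Next I would build inductively a sequence $C_0 = A' \supseteq C_1 \supseteq C_2 \supseteq \cdots$ of relatively definable, generic, symmetric subsets of $\aut(\C)$ satisfying $C_{i+1}^{2} \subseteq C_i$ for every $i$. For the base step, applying Lemma \ref{lemma:  Y8 contained in A'} to the pair $(A, A')$ produces a relatively type-definable, generic, symmetric $Y_0$ with $Y_0^{8} = (Y_0^{4})\cdot(Y_0^{4}) \subseteq A'$. Because $Y_0^{4}$ is itself relatively type-definable (Lemma \ref{lemma: relative type-definability is preserved under products}(ii)), Corollary \ref{corollary: compactness for relatively definable subsets}(ii) supplies a formula $\psi$ with $Y_0^{4} \subseteq A_\psi$ and $A_\psi \cdot A_\psi \subseteq A'$. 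Setting $C_1 := A_\psi \cap A_\psi^{-1}$ yields a symmetric, relatively definable set which contains $Y_0^{4}$ (using that $Y_0^{4}$ is symmetric because $Y_0$ is), and in particular contains the generic $Y_0$, so $C_1$ is generic; moreover $C_1^{2} \subseteq A'$. The induction step is identical: $Y_i$ is relatively type-definable, generic (hence $\mu(Y_i) > 0$), symmetric, and $Y_i Y_i Y_i^{-1} Y_i^{-1} = Y_i^{4} \subseteq C_i$, so Lemma \ref{lemma:  Y8 contained in A'} applies to the pair $(Y_i, C_i)$ and the same packaging produces $C_{i+1}$.

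Corollary \ref{corollary: Autf-KP contained in the intersection} then forces $\autf_{KP}(\C) \subseteq \bigcap_{i} C_i \subseteq C_0 = A'$, and letting $A'$ range over all symmetric relatively definable supersets of $AAA^{-1}A^{-1}$ completes the argument. The main obstacle I expect is the packaging step used at each stage: Lemma \ref{lemma:  Y8 contained in A'} natively yields only a relatively type-definable $Y_i$ with $Y_i^{8} \subseteq C_i$, whereas Corollary \ref{corollary: Autf-KP contained in the intersection} demands relatively definable $C_{i+1}$ with $C_{i+1}^{2} \subseteq C_i$. Bridging this gap is precisely the role of the compactness statement Corollary \ref{corollary: compactness for relatively definable subsets}(ii); absent it, one cannot transfer positive measure information from the type-definable world into a single relatively definable set tight enough to continue the induction.
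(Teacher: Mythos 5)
Your proposal is correct and follows essentially the same route as the paper's own proof: reduce to an arbitrary relatively definable, symmetric superset $A'$ of $AAA^{-1}A^{-1}$ (justified by Corollary~\ref{corollary: compactness for relatively definable subsets}(i)), then inductively alternate Lemma~\ref{lemma:  Y8 contained in A'} with Corollary~\ref{corollary: compactness for relatively definable subsets}(ii) to build a descending chain of relatively definable, generic, symmetric sets $C_i$ with $C_{i+1}^2\subseteq C_i$, and finish with Corollary~\ref{corollary: Autf-KP contained in the intersection}. The only cosmetic difference is that you make the symmetrization explicit via $C_{i+1}:=A_\psi\cap A_\psi^{-1}$, whereas the paper simply asserts the existence of a suitable symmetric $Y'$.
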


\begin{proof}
Take any $A'$ relatively definable, symmetric, and such that $AAA^{-1}A^{-1} \subseteq A'$. Put $C_0 := A'$.

By Lemma \ref{lemma:  Y8 contained in A'}, we obtain a relatively type-definable, generic, symmetric $Y$ such that $(Y^4)^2 \subseteq A'$. So, by Corollary \ref{corollary: compactness for relatively definable subsets}, there is a relatively definable, symmetric $Y'$ satisfying $Y^4 \subseteq Y'$ and $Y'^2 \subseteq A'$. Put $C_1 := Y'$.

Next, we apply Lemma \ref{lemma:  Y8 contained in A'} to $Y$ in place of $A$ and $Y'$ in place of $A'$, and we obtain a relatively type-definable, generic, symmetric $Z$ such that $(Z^4)^2 \subseteq Y'$.  So, by Corollary \ref{corollary: compactness for relatively definable subsets}, there is a relatively definable, symmetric $Z'$ satisfying $Z^4 \subseteq Z'$ and $Z'^2 \subseteq Y'$. Put $C_2 := Z'$.

Continuing in this way, we obtain a family $\{ C_i : i \in \omega\}$ of relatively definable, generic, symmetric subsets of $\aut(\C)$ such that $C_{i+1}^2 \subseteq C_i$ for every $i \in \omega$. By Corollary \ref{corollary: Autf-KP contained in the intersection}, $\autf_{KP}(\C) \subseteq \bigcap_{i \in \omega} C_i\subseteq A'$. Since $A'$ was an arbitrary relatively definable, symmetric set containing $AAA^{-1}A^{-1}$, we get  $\autf_{KP}(\C) \subseteq AAA^{-1}A^{-1}$.
\end{proof}


We have now all the ingredients to prove Theorem \ref{theorem: relative definable amenability implies G-compactness}. Our goal will be to show that for any short tuple $\bar \alpha$, $[\bar \alpha]_{E_{KP}} \subseteq [\bar \alpha]_{E_L}$ (which just means that the $\autf_{KP}(\C)$-orbit of $\bar \alpha$ is contained in $ [\bar \alpha]_{E_L}$).

\begin{proof}[Proof of Theorem \ref{theorem: relative definable amenability implies G-compactness}]
Recall that $T$ being amenable means that there exists an $\aut(\C)$-invariant, regular, Borel probability measure $\tilde{\mu}$ on $S_{\bar c}(\C)$. 
By Corollary \ref{corollary: measure extends to relatively type-definable sets}, $\tilde{\mu}$  induces an $\aut(\C)$-invariant, finitely additive, probability measure $\mu$ on the Boolean algebra ${\mathcal A}$ generated by relatively type-definable subsets of $\aut(\C)$.

Consider any $p(\bar x) = \tp(\bar \alpha/\emptyset) \in S(\emptyset)$ with a short subtuple 
$\bar \alpha$ of $\bar c$. Choose a $\mu$-wide type $q \in S_p(\C)$, i.e. $\tilde{\mu}([\varphi(\bar x', \bar b)])>0$ (equivalently, $\mu(A_{\varphi,\bar \alpha,\bar b})>0$) for any $\varphi(\bar x, \bar b) \in q$ (where $\bar x'\supset \bar x$ is the tuple of variables corresponding to $\bar c$).
Take a small model $M \prec \C$. 
Replacing $\bar \alpha$ by a realization $\bar \alpha'$ of $q|_M$, we can assume that $\bar \alpha \models q|_M$ (because $\bar \alpha \equiv \bar \alpha'$ implies that $[\bar \alpha]_{E_{KP}} = [\bar \alpha]_{E_L}$ is equivalent to $[\bar \alpha']_{E_{KP}} = [\bar \alpha']_{E_L}$).

Consider any $\varphi(\bar x, \bar \alpha) \in q|_{\bar \alpha}$. Then $\mu(A_{\varphi,\bar \alpha})>0$, so, by Corollary \ref{corollary: Autf-KP contained in A4}, we conclude that $\autf_{KP}(\C) \subseteq A_{\varphi,\bar \alpha}A_{\varphi,\bar \alpha}A_{\varphi,\bar \alpha}^{-1}A_{\varphi,\bar \alpha}^{-1}$. Therefore, by Corollary \ref{corollary: compactness for relatively definable subsets}(i), we get
$$\autf_{KP}(\C) \subseteq \bigcap_{\varphi(\bar x, \bar \alpha) \in q|_{\bar \alpha}}  A_{\varphi,\bar \alpha}A_{\varphi,\bar \alpha}A_{\varphi,\bar \alpha}^{-1}A_{\varphi,\bar \alpha}^{-1} =  A_{q|_{\bar \alpha},\bar \alpha}A_{q|_{\bar \alpha},\bar \alpha}A_{q|_{\bar \alpha},\bar \alpha}^{-1}A_{q|_{\bar \alpha},\bar \alpha}^{-1}.$$
On the other hand,  Lemma \ref{lemma: the intersection contained in Autf-L} tells us that 
$$A_{q|_{\bar \alpha},\bar \alpha}A_{q|_{\bar \alpha},\bar \alpha}A_{q|_{\bar \alpha},\bar \alpha}^{-1}A_{q|_{\bar \alpha},\bar \alpha}^{-1} \bar \alpha \subseteq \{ \bar \beta : d_L(\bar \alpha, \bar \beta) \leq 4\} \subseteq [\bar \alpha]_{E_L}.$$
Therefore, $[\bar \alpha]_{E_{KP}} = [\bar \alpha]_{E_L}$ has Lascar diameter at most $4$.
\end{proof}

Theorem \ref{theorem: relative definable amenability implies G-compactness} is a global result. It is natural to ask whether we can extend it to a local version (as in Proposition \ref{proposition: extremely amenable local}).

\begin{ques}\label{question: local case}
Is it true that if $p(\bar x) \in S(\emptyset)$ is amenable, then the Lascar strong types on $p(\bar x)$ coincide with Kim-Pillay strong types? Does amenability of $p(\bar x)$ imply that the Lascar diameter of $p(\bar x)$ is at most 4?
\end{ques}

One could think that the above arguments should yield the positive answer to these questions. The problem is that, assuming only amenability of $p(\bar x)$, we have the induced measure $\mu$ but defined only on the Boolean algebra of relatively $\bar \alpha$-type-definable subsets of $\aut(\C)$, for a fixed $\bar \alpha\models p$. So, for the recursive proof of Corollary \ref{corollary: Autf-KP contained in A4} to go through, starting from a set $A\subseteq \aut(\C)$ relatively $\bar \alpha$-type-definable [where for the purpose of answering Question \ref{question: local case} via an argument as in the proof of Theorem \ref{theorem: relative definable amenability implies G-compactness}, we can additionally assume that $A$ is defined over  $\bar \alpha$] of positive measure, we need to produce the desired $Y$ also relatively $\bar \alpha$-type-definable [over $\bar \alpha$] (in order be able to continue our recursion). But this requires a strengthening of Lemma \ref{lemma:  Y8 contained in A'} to the version where for $A$ relatively $\bar \alpha$-type-definable of positive measure one wants to obtain the desired $Y$ which is also relatively $\bar \alpha$-type-definable; the variant with $A$ and $Y$ defined over $\bar \alpha$ would also be sufficient. Trying to follow the lines of the proof of Lemma \ref{lemma:  Y8 contained in A'}, even if $A$ is defined over $\bar \alpha$, Claim 1 requires a longer tuple $\bar a$ which produces the desired set $Y$ which is relatively $\bar a$-type-definable, and this is the only obstacle to answer positively Question \ref{question: local case} via the above arguments.

Another question is whether the bound $4$ on the Lascar diameters of Lascar strong types in Theorem \ref{theorem: relative definable amenability implies G-compactness} could be decreased. Proposition \ref{proposition: main theorem for definable measures} tells us that it can be decreased to $2$ under the stronger assumption of definable amenability of $T$.

\end{document}